\numberwithin{equation}{section}
\newtheorem{theorem}{Theorem}[section]
\newtheorem{proposition}[theorem]{Proposition}
\newtheorem{claim}[theorem]{Claim}
\newtheorem{lemma}[theorem]{Lemma}
\theoremstyle{definition}
\newtheorem{definition}[theorem]{Definition}
\newtheorem{remark}[theorem]{Remark}
\title{On addition of 1-handles with chart loops to 2-dimensional braids}
\author{Inasa Nakamura}
\address{
Institute for Biology and Mathematics of Dynamic Cellular Processes (iBMath), Interdisciplinary Center for Mathematical Sciences, Graduate School of Mathematical Sciences, The University of Tokyo\newline
3-8-1 Komaba, Tokyo 153-8914, Japan}
\email{inasa@ms.u-tokyo.ac.jp}
\subjclass[2010]{Primary 57Q45; Secondary 57Q35}
\keywords{surface-knot; 2-dimensional braid; chart; 1-handle}
\begin{document}

\begin{abstract}
A 2-dimensional braid over an oriented surface-knot $F$ is presented by a graph called a chart on a surface diagram of $F$. 
We consider 2-dimensional braids obtained by an addition of 1-handles equipped with chart loops. We introduce moves of 1-handles with chart loops, called 1-handle moves, and we investigate how much we can simplify a 2-dimensional braid by using 1-handle moves. Further, we show that an addition of 1-handles with chart loops is an unbraiding operation. 
\end{abstract}

\maketitle

\section{Introduction}\label{sec1}

A {\it surface-knot} is the image of a smooth embedding of a connected closed surface into the Euclidean 4-space $\mathbb{R}^4$. 
In this paper, we assume that surface-knots are oriented. 
For a surface-knot $F$, we can consider a surface in the form of a covering over $F$, called a 2-dimensional braid over $F$. Two 2-dimensional braids over $F$ are equivalent if one is carried to the other by an ambient isotopy of $\mathbb{R}^4$ whose restriction to a tubular neighborhood of $F$ is fiber-preserving. 
A 2-dimensional braid over $F$, denoted by $(F, \Gamma)$, is presented by a graph $\Gamma$ called a chart on a surface diagram of $F$. For simplicity, we will often 
identify a surface diagram of $F$ with $F$ itself. 
In \cite{Hirose}, Hirose investigated particular 2-dimensional braids over a connected surface $\Sigma$ standardly embedded in $\mathbb{R}^4$, called toroidal knotted surfaces, and he showed that any such surface is classified into two types, the connected sum of trivial tori with the spun $T^2$-knot of a classical knot and that of those with the turned spun $T^2$-knot, by using the generators of the group of isotopies of $\Sigma$ which are extendable to $\mathbb{R}^4$ such that it is a subgroup of the mapping class group of $\Sigma$. This result immediately implies the same result for any 2-dimensional braids with \lq\lq repeated pattern" over $\Sigma$, which is presented by a chart consisting of loops on $\Sigma$ satisfying a certain condition. In particular, this result implies that all the chart loops presenting the 2-dimensional braid can be gathered to a torus part of $\Sigma$. 
Our first motivation of this paper is to give a graphical proof of this result. 
We introduce equivalence moves of surface diagrams with charts, called 1-handle moves, and 
we investigate how much we can simplify the 2-dimensional braid by using 1-handle moves. 

Let $B^2$ be a unit 2-disk and let $I=[0,1]$. 
A {\it 1-handle} is a 3-ball $h=B^2 \times I$ smoothly embedded in $\mathbb{R}^4$ such that $h \cap F=(B^2 \times \partial I) \cap F$. Further we assume that $h$ has the framing such that the projected image in $\mathbb{R}^3$ has the blackboard framing (see Remark \ref{rem4-1}). 
The surface-knot obtained from $F$ by a {\it 1-handle addition} along $h$ is the surface

\[
(F-(\mathrm{Int} B^2 \times \partial I)) \cup (\partial B^2 \times I),
\]
which is denoted by $F+h$. In this paper, we assume that $h$ is orientable, that is, $F+h$ is orientable, and we give $F+h$ the orientation induced from that of $F$. 

In the first part of this paper, we consider $F$ as a surface-knot which is in the form of the result of 1-handle additions for a surface-knot $F_0$. For a 1-handle $h$, we call the {\it oriented core} the image of $\{0\} \times I \subset h$ with the orientation of $I$. Take a base point $x$ in $\partial N(F_0)$ for a tubular neighborhood $N(F_0)$ of $F_0$ in $\mathbb{R}^4$. For the oriented core $C$, we denote the closure of $C-C \cap N(F_0)$ by $\overline{C}$. 
 Take a path $\alpha$ (respectively, $\beta$) in $\partial N(F_0)$ connecting $x$ and the initial point (respectively, the terminal point) of $\overline{C}$. Then the closed path $\alpha \overline{C} \beta^{-1}$ induces an element in the double coset $P \backslash G(F_0)/P$, where  $G(F_0)=\pi_1(\mathbb{R}^4-\mathrm{Int}N(F_0), x)$, the knot group of $F_0$,  and $P$ is the image $i_*(\pi_1(\partial N(F_0), x))$ by the homomorphism $i_*$ induced by the inclusion $i: \partial N(F_0) \hookrightarrow \mathbb{R}^4-F_0$, the peripheral subgroup of $G(F_0)$. 
Since a 1-handle is determined by its oriented core, and two oriented cores $C$ and $C'$ are \lq\lq equivalent" if and only if $P(C)P=P(C')P$ \cite{Boyle1} (see also \cite{Kamada14}), so 
we identify a 1-handle $h$ with an element in $P \backslash G(F_0) / P$. 

For a 1-handle $h$ with the oriented core $C$, we determine the {\it core loop} of $h$ by the projected image of $\alpha \overline{C} \beta$ to $F_0+h \subset F$, with the orientation induced from that of $C$, where $\alpha$, $\beta$, $\overline{C}$ are given for $C$ as in the above paragraph. For a set of 1-handles, we add a condition that core loops are mutually disjoint. 
We determine the {\it cocore} of a 1-handle $h$ by the oriented closed path $\partial B^2 \times \{0\} \subset h$, with the orientation of $\partial B^2$. 
Further, we determine the base point of the core loop and the cocore of $h$ by their intersection point (see Figure \ref{fig1}). 

\begin{figure}[ht]
\centering
\includegraphics*[height=3.5cm]{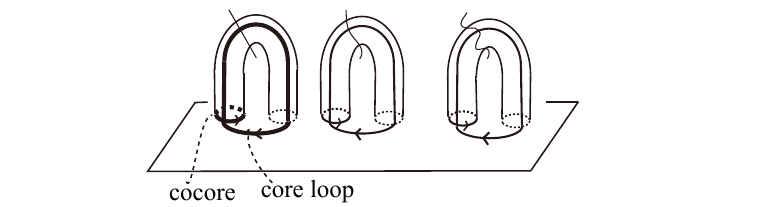}
\caption{The core loop and cocore of a 1-handle. In order to indicate that a 1-handle may be non-trivial, we draw a slash in the middle.}
\label{fig1}
\end{figure}
 
A 2-dimensional braid {\it with repeated pattern} is a 2-dimensional braid presented by a chart consisting of a finite number of bands of parallel loops such that all bands of parallel loops present the same classical braid $b$, which is called the {\it pattern braid}. 
Such a 2-dimensional braid is determined from the integers which present the numbers of the bands intersecting the oriented closed curves presenting the generators of the first homology group $H_1(F; \mathbb{Z})$. 
When $F$ is an embedding of a closed surface $\Sigma$ of genus $g$, we present the generators of $H_1(F; \mathbb{Z})$ by the embeddding of those of $H_1(\Sigma; \mathbb{Z})$  determined by the curves $\alpha_1, \alpha_2, \ldots, \alpha_{2g}$ as illustrated in Figure \ref{fig1-2}, and in particular, 
we take such curves as the cocores and core loops of 1-handles. 
For integers $m$ and $n$, let us denote by $h(m,n)$ a 1-handle $h \in P \backslash G(F_0) / P$ with a chart such that the cocore and core loop 
presents the pattern braid to the power $m$ and $n$, respectively. This presentation $h(m,n)$ is well-defined; hence  we can assume that $h(m,n)$ is presented by \lq\lq simplified" chart loops on regular neighborhoods of the core loop and cocore of $h$ in $F$, which goes along the core loop and the cocore $m$ and $n$ times, respectively; we call such a 1-handle a {\it 1-handle with chart loops}, or simply a {\it 1-handle}; further, we can assume that all 1-handles are attached to a fixed 2-disk as illustrated in Figure \ref{fig1}, and there are no chart edges on the 2-disk except those belonging to the 1-handles (see Section \ref{sec3}). 

\begin{figure}[ht]
\centering
\includegraphics*[height=3.5cm]{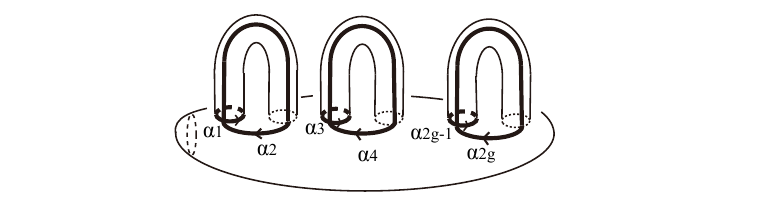}
\caption{Generators of $H_1(\Sigma; \mathbb{Z})$ for a closed surface $\Sigma$ of genus $g$.}
\label{fig1-2}
\end{figure}

For a 2-dimensional braid $(F_0, \Gamma)$ and 1-handles with chart loops $h_1(m_1,n_1)$, $\ldots$, $h_g(m_g, n_g)$, we denote the 2-dimensional braid which is the result of a 1-handle addition by $(F_0, \Gamma) + \sum_{j=1}^g h_j(m_j,n_j)$, which presents a 2-dimensional braid over $F_0+\sum_{j=1}^g h_j$ with repeated pattern. In particular, when $F_0$ is a knotted 2-sphere and $\Gamma=\emptyset$, an empty chart, for simplicity, we denote the resulting surface by $\sum_{j=1}^g h_j(m_j, n_j)$. 

Using this notation, Hirose's result is presented as follows. 
A {\it trivial} 1-handle is a 1-handle whose oriented core is represented by $1 \in  P\backslash G(F_0)/P$. 

\begin{theorem}[Hirose]\label{thm1}

A 2-dimensional braid with repeated pattern over a standard surface, $\sum_{j=1}^g 1(m_j, n_j)$, is equivalent to one of the followings:

\begin{enumerate}
\item[\rm{(1)}]
$1(k, 0)+\sum _{j=2}^g 1(0,0)$,

\item[\rm{(2)}]
$1(k, k)+\sum_{j=2}^g 1(0,0)$,
\end{enumerate}
for an integer $k$.

\end{theorem}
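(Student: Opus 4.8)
The plan is to translate the equivalence problem into linear algebra over $\mathbb{Z}$ and then carry out a Euclidean-type reduction graphically. First I would record the braid data of $\sum_{j=1}^g 1(m_j,n_j)$ as a single homology class. Writing $a_j$ and $b_j$ for the classes of the core loop and cocore of the $j$-th handle, the curves $a_1,b_1,\dots,a_g,b_g$ form a symplectic basis of $H_1(F;\mathbb{Z})$ with $a_i\cdot b_j=\delta_{ij}$ and $a_i\cdot a_j=b_i\cdot b_j=0$. Since $m_j$ and $n_j$ count the bands meeting $b_j$ and $a_j$, the chart represents the class $v=\sum_{j=1}^g (m_j a_j-n_j b_j)$, and the whole configuration is determined by $v$ together with the genus. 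The blackboard framing fixed in Remark \ref{rem4-1} equips $H_1(F;\mathbb{Z}/2)$ with a quadratic refinement $q$ of the mod-$2$ intersection form, normalized by $q(a_j)=q(b_j)=0$ and $q(x+y)=q(x)+q(y)+x\cdot y$; a direct computation then gives
\[
q(v)\equiv \sum_{j=1}^g m_j n_j \pmod 2 .
\]
I would take as candidate complete invariants the integer $k=\gcd(m_1,n_1,\dots,m_g,n_g)$ and the class $\Phi=q(v)\in\mathbb{Z}/2$.

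Second, I would compute the effect of each 1-handle move on $v$, $k$, and $\Phi$. Each move should induce either a transvection $x\mapsto x+(x\cdot w)w$ along an isotropic class $w$ with $q(w)=0$ — handle slides contributing the classes $a_i\pm a_j$, $b_i\pm b_j$, and Dehn twists along core loops and cocores contributing $a_j,b_j$ — or the creation/cancellation of a trivial handle $1(0,0)$, which changes the genus but not $v$. Because every such $w$ satisfies $q(w)=0$, all these transvections preserve $q$, hence preserve $\Phi$, and they manifestly preserve $k$. This bookkeeping step certifies that $k$ and $\Phi$ are invariants of the 1-handle-move equivalence class, so the normal forms in cases (1) and (2) cannot in general be confused.

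Third — the geometric heart — I would realize the reduction to normal form by slides performed on the chart loops. Using slides to mimic the Euclidean algorithm on the columns $(m_j,n_j)$, I would push all the winding into the first handle, turning $h_2,\dots,h_g$ into trivial handles $1(0,0)$ and leaving a single handle $1(m,n)$ with $\gcd(m,n)=k$. It then remains to normalize the primitive class $v/k$, which reduces to the classical fact that the subgroup of $\mathrm{Sp}(2g,\mathbb{Z})$ preserving $q$ acts transitively on primitive vectors of each fixed value of $q$. If $\Phi=0$ I would bring $v/k$ to $a_1$, giving $1(k,0)+\sum_{j\ge 2}1(0,0)$; if $\Phi=1$ (which forces $k$ odd, since $q(k\,v_0)=k^2 q(v_0)$) I would bring $v/k$ to the class $a_1-b_1$, for which $q=1$, giving $1(k,k)+\sum_{j\ge 2}1(0,0)$.

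I expect the main obstacle to be the third step executed graphically rather than homologically: one must exhibit, for each abstract transvection and for the Euclidean reduction, an explicit legal sequence of 1-handle moves on the chart loops, and in particular verify that no intermediate step smuggles in an \emph{odd} self-twist of a handle (which would send $(m,n)\mapsto(m,n+m)$ and flip $\Phi$). A secondary obstacle is confirming that the framing of Remark \ref{rem4-1} yields precisely the quadratic form with $q(a_j)=q(b_j)=0$ used above; once that normalization is pinned down, the separation of cases (1) and (2) and the identification of $k$ follow from the invariance established in the second step.
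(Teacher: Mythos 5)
Your strategy (encode the chart as a class $v\in H_1(F;\mathbb{Z})$, exhibit a gcd and a mod-$2$ quadratic invariant preserved by the moves, then normalize $v$ by the $q$-preserving symplectic group) is a genuinely different route from the paper's, which never passes to homology: the paper proves Theorem \ref{thm2} by a direct Euclidean reduction using the handle-slide move (\ref{eq4}), and then finishes Theorem \ref{thm1} with the core/cocore swap (\ref{eq3}) and the double-twist move (\ref{eq2}). However, your proposal contains two genuine errors, both sitting exactly at the delicate point of this theorem. First, the step-2 bookkeeping is backwards. With your convention $q(x+y)=q(x)+q(y)+x\cdot y$, a transvection $x\mapsto x+(x\cdot w)w$ preserves $q$ precisely when $q(w)=1$, not $q(w)=0$: if $x\cdot w$ is odd, then $q(x+w)=q(x)+q(w)+1$. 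In particular the ``Dehn twists along core loops and cocores contributing $a_j,b_j$'' that you list do not preserve $q$, and they are not legal moves at all: a single twist along the cocore $b_j$ sends $(m_j,n_j)\mapsto(m_j,n_j\pm m_j)$, which is exactly the ``odd self-twist'' you warn against in your last paragraph; it carries $1(k,0)$ to $1(k,k)$, and these are inequivalent by Livingston's result (Remark \ref{rem4-1}). The framing constraint permits only the double twist (\ref{eq2}), $n\mapsto n\pm 2m$. So the list of allowed transformations must be corrected; the legal moves such as (\ref{eq4}) are in fact not transvections, though they do preserve your invariants, which has to be verified move-by-move (for (\ref{eq4}): $m(n+n')+(m'-m)n'=mn+m'n'$, an exact identity).

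Second, your invariant $\Phi=q(v)\equiv\sum_j m_jn_j \pmod 2$ is not the right one, and your decision rule fails whenever $k=\gcd$ is even. If $k$ is even then $v\equiv 0$ in $H_1(F;\mathbb{Z}/2)$, so $\Phi=0$ automatically, yet $1(k,k)\not\sim 1(k,0)$. Concretely, for $1(2,2)$ your rule reads ``$\Phi=0$, bring $v/k$ to $a_1$,'' but $v/k=a_1-b_1$ has $q(v/k)=1$, and the transitivity statement you invoke only moves primitive vectors within a fixed value of $q$, so the requested normalization does not exist. The correct complete invariant is $q(v/k)$, i.e.\ the parity of $\sum_j m_jn_j/k^2$; this is exactly the dichotomy appearing in the paper's Theorem \ref{thm4}. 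Finally, even after both corrections, your third step---realizing each needed $q$-preserving transformation by a legal sequence of 1-handle moves---is deferred, and that realization is the actual content of the paper's proof: Theorem \ref{thm2} performs the Euclidean reduction to $1(m,n_1')+1(0,n_2')+\sum 1(0,0)$ via (\ref{eq4}) and (\ref{eq1}), then (\ref{eq9}), (\ref{eq3}) and (\ref{eq4}) absorb the second handle, and (\ref{eq2}) together with (\ref{eq3}) reduce the remaining pair $(m',n')$ to $(k,0)$ or $(k,k)$. As it stands, your argument establishes (after repair) that the two normal forms are distinguished, but not that every $\sum_j 1(m_j,n_j)$ reaches one of them.
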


Our results are as follows. 

\begin{theorem}\label{thm2}
Let $(F, \Gamma)=\sum_{j=1}^g h_j(m_j, n_j)$ be a 2-dimensional braid with repeated pattern over the surface-knot $F$ obtained from a surface-knot $F_0$ by an addition of 1-handles. Then, by 1-handle moves (see Section \ref{sec4}), $(F, \Gamma)$ is deformed to the following form:

\begin{equation*}
 h_1' (m_, n_1')+ h_2'(0, n_2') +\sum_{j=3}^g h_j'(0,0),
\end{equation*}
where $m=\mathrm{gcd}_{1\leq j\leq g}\{m_j\}$, the greatest common divisor of $m_1, \ldots, m_g$, 
$h_1',\ldots, h_g' \in \langle h_1, \ldots, h_g \rangle < P\backslash G(F_0)/P$, where $\langle h_1, \ldots, h_g \rangle$ is the subgroup of $P\backslash G(F_0)/P$ generated by $ h_1, \ldots, h_g$, and $n_1', n_2'$ are integers.
\end{theorem}

In particular, we can prove Theorem \ref{thm1}. 

\begin{theorem}\label{thm3}
Let $(F, \Gamma)$ be as in Theorem \ref{thm2}. 
Then, by 1-handle moves, $(F, \Gamma) + 1(0,0)$ is deformed to the following form:

\begin{equation*}
 \sum_{j=1}^g h_j(0, \tilde{n}_j)+ h(m,n),
\end{equation*}
where $h \in \langle h_1, \ldots, h_g \rangle  < (P\backslash G(F_0)/P)/H$ for the normal subgroup $H$ of $P\backslash G(F_0)/P$ generated by $h_ih_jh_i^{-1}h_j^{-1}$ $(i,j=1, \ldots, g)$,  $m=\mathrm{gcd}_{1\leq j\leq g}\{m_j\}$, $n= \frac{\sum_{j=1}^g m_j n_j}{m}$ and $\tilde{n}_1, \ldots, \tilde{n}_g \in \{0,\ldots, m-1\}$. 
\end{theorem}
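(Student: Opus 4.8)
The plan is to reduce Theorem \ref{thm3} to an exercise in integral linear algebra on the winding data, carried out by 1-handle moves, while carefully tracking the group elements. I regard the configuration as a collection of labelled handles, the $j$-th carrying the triple $(h_j, m_j, n_j)$, where $m_j$ is the cocore-winding (the number of pattern bands meeting the cocore) and $n_j$ is the core-winding. The added handle $1(0,0)$ will serve as a \emph{container}. Two quantities will be invariant under the moves I use: the greatest common divisor $m=\gcd_{1\le j\le g}\{m_j\}$ of the cocore-windings, and the bilinear sum $\sum_j m_j n_j$. The first is the standard invariant of an integer vector under the elementary column operations that handle slides realise; the second is preserved because the handle slide that adds a multiple of one handle's cocore-winding to another is dual, on the core-windings, to the opposite operation, so that the change in $\sum_j m_j n_j$ cancels (a symplectic transvection).

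First I would use the 1-handle moves of Section \ref{sec4} to transfer pattern bands between handles joined by a trivial connecting tube; such a transfer changes the integer winding data (the band counts) but leaves the core loop, hence the group element, of the source handle untouched, all accumulation being routed onto the container. Running the Euclidean algorithm on the cocore-windings $(m_1,\ldots,m_g,0)$ by means of these transfers, I would reduce them to $(0,\ldots,0,m)$, so that every original handle acquires cocore-winding $0$ while the container acquires cocore-winding $m=\gcd_j\{m_j\}$. Throughout, the dual transformation on the core-windings preserves $\sum_j m_j n_j$; since after the reduction all handles except the container have cocore-winding $0$, the only surviving term of $\sum_j m_j n_j$ is $m\cdot n_{\mathrm{container}}$, which forces the container's core-winding to be $n=\bigl(\sum_j m_j n_j\bigr)/m$, an integer because $m\mid m_j$ for each $j$.

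Next I would normalise the leftover core-windings. Each original handle now has cocore-winding $0$, so altering its core-winding does not affect $\sum_j m_j n_j$; sliding such a handle once around the core loop of the container, whose chart meets it in $m$ bands, changes its core-winding by $m$. Reducing modulo $m$ in this way yields the residues $\tilde n_j\in\{0,\ldots,m-1\}$ for $j=1,\ldots,g$, leaving the container in the position $h(m,n)$. Finally I would identify the element $h$: the container's core loop is, after the reduction, an ordered product of the $h_j^{\pm1}$ prescribed by the sequence of transfers, and two sequences realising the same integral reduction differ by insertions of the relations $h_ih_jh_i^{-1}h_j^{-1}$. Hence $h$ is well defined only in $\langle h_1,\ldots,h_g\rangle$ modulo the normal subgroup $H$ generated by these commutators, exactly as claimed.

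I expect the main obstacle to be precisely this last bookkeeping: showing that the reduction of the integer data can be organised so as to leave the original labels $h_1,\ldots,h_g$ on the first $g$ handles unchanged, with every group-theoretic contribution routed onto the container, and that the resulting element of $\langle h_1,\ldots,h_g\rangle$ depends on the chosen reduction only up to commutators. Making this rigorous requires a careful analysis of how a band transfer across the connecting tube conjugates and multiplies the core-loop labels, together with a verification that re-ordering commuting slides—which I am free to do by adding and removing trivial handles as in the proof of Theorem \ref{thm2}—alters the container's label only within $H$. The purely combinatorial content of the two middle steps, namely the Euclidean reduction and the modular normalisation, is routine once the moves are in hand.
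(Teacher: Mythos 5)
Your proposal is correct and is essentially the paper's own proof: your container-mediated ``transfers'' are exactly the paper's 1-handle moves (\ref{eq7}), (\ref{eq4}) and (\ref{eq9}), applied in the same order --- the trivial handle first absorbs $m=\gcd_{1\leq j\leq g}\{m_j\}$ on its cocore winding while the $h_j(m_j,n_j)$ stay untouched, then it slides $m_j/m$ times along each $H_j$ to kill their cocore windings while accumulating the group element $h$ and the core winding $n$, and finally the residual core windings are reduced mod $m$. The only cosmetic differences are that you identify $n$ via the invariance of $\sum_j m_j n_j$ under slides where the paper computes it directly as $\sum_j \frac{m_j n_j}{m}$, and the bookkeeping obstacle you flag is resolved precisely as you anticipate: all slides of the $H_j$ along the still-pristine container (group element $1$, core winding $0$) are performed before any slide of the container along an $H_j$, so no group element ever contaminates the original handles, and the order-dependence of the resulting product $h$ is absorbed by passing to the quotient by $H$.
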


We improve Theorem \ref{thm1}. 

\begin{theorem}\label{thm4}
Let $\mathrm{gcd}=\mathrm{gcd}_{1\leq j\leq g} \{m_j, n_j\}$. Then, a 2-dimensional braid with repeated pattern, $\sum_{j=1}^g 1(m_j, n_j)$, is equivalent to one of the followings:

\begin{enumerate}
\item[\rm{(1)}] $1(\mathrm{gcd}, \mathrm{gcd})+ \sum_{j=2}^g 1(0,0)$ if $\frac{\sum_{j=1}^g m_jn_j}{(\mathrm{gcd})^2}$ is odd, \\
\item[\rm{(2)}] $1(\mathrm{gcd}, 0)+\sum_{j=2}^g 1(0,0)$ if $\frac{\sum_{j=1}^g m_jn_j}{(\mathrm{gcd})^2}$ is even. \end{enumerate}
\end{theorem}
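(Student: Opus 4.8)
The plan is to deduce Theorem~\ref{thm4} from Theorem~\ref{thm1} by producing two invariants of the equivalence that (i) pin down the integer $k$ occurring in Hirose's normal forms and (ii) decide which of the two forms occurs. Throughout I use the standard symplectic basis $a_1,b_1,\dots,a_g,b_g$ of $H_1(F;\mathbb{Z})$ furnished by the cocores $a_j$ and core loops $b_j$ of the $1$-handles, with $a_i\cdot b_j=\delta_{ij}$ and $a_i\cdot a_j=b_i\cdot b_j=0$. The repeated-pattern data assemble into the coordinate vector $v=(m_1,n_1,\dots,m_g,n_g)$, that is, into the class $c=\sum_{j=1}^g(m_j\,b_j+n_j\,a_j)$ carried by the chart. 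Since the equivalence acts on $H_1(F;\mathbb{Z})$ through a subgroup of $Sp(2g,\mathbb{Z})$ (the extendable self-diffeomorphisms, equivalently the effect of $1$-handle moves), the first invariant is $d:=\gcd(m_1,n_1,\dots,m_g,n_g)$, the divisibility of $c$, which is preserved by any symplectic change of basis.

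The second, decisive, invariant is a $\mathbb{Z}/2$-valued quadratic refinement $q$ of the mod-$2$ intersection form, normalized by the blackboard framing so that $q(a_j)=q(b_j)=0$; the defining identity $q(x+y)=q(x)+q(y)+x\cdot y$ then gives, on the primitive class $c_0:=c/d$, the value $\epsilon:=q(c_0)=\frac{1}{d^2}\sum_{j=1}^g m_jn_j \bmod 2$. First I would record how each generating move acts on $v$ — permutation of handles, the turn exchanging $a_j\leftrightarrow b_j$, and handle slides adding one handle's contribution to another — and verify that each lies in the stabilizer $O(q)<Sp(2g,\mathbb{Z})$. Permutations and the turn visibly fix both $\sum_j m_jn_j$ and $d$; for a slide $c\mapsto c+c'$ the quadratic identity together with integrality of $d$ shows $q(c_0)$ is unchanged. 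The place where the framing hypothesis is essential is exactly here: an unrestricted transvection such as $b_1\mapsto a_1+b_1$ would change $q$, so one must confirm that no allowed move realizes it, which the blackboard-framing normalization guarantees.

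With both invariants in hand I would finish by evaluation. By Theorem~\ref{thm1}, $\sum_{j=1}^g 1(m_j,n_j)$ is equivalent to $1(k,0)+\sum_{j=2}^g 1(0,0)$ or to $1(k,k)+\sum_{j=2}^g 1(0,0)$. Computing the invariants on these normal forms yields $(d,\epsilon)=(|k|,0)$ and $(d,\epsilon)=(|k|,1)$ respectively, while the original data give $d=\gcd_{1\le j\le g}\{m_j,n_j\}$ and $\epsilon=\frac{1}{d^2}\sum_{j=1}^g m_jn_j \bmod 2$. Since $d$ and $\epsilon$ are invariant, matching forces $|k|=\gcd$ and selects $1(\gcd,\gcd)+\sum 1(0,0)$ when $\frac{1}{(\gcd)^2}\sum_j m_jn_j$ is odd and $1(\gcd,0)+\sum 1(0,0)$ when it is even, exactly as stated; note that $\epsilon$ simultaneously certifies that the two Hirose forms are genuinely inequivalent.

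The main obstacle I anticipate is the invariance step: establishing rigorously that every $1$-handle move from Section~\ref{sec4}, and every extendable automorphism used in Hirose's reduction, preserves $q$. Concretely this requires reading off the exact homological effect of each move, expressing it in the basis $\{a_j,b_j\}$, and checking membership in $O(q)$; the delicate case is separating the $q$-preserving slides and twists (the even shears) from the $q$-changing ones, which is precisely where the blackboard-framing condition must be invoked. Once this is settled, the $\gcd$-invariance and the evaluation on normal forms are routine.
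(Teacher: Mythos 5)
Your overall strategy --- deducing Theorem \ref{thm4} from Theorem \ref{thm1} by exhibiting two invariants, the divisibility $d$ of the chart class and the value $\epsilon$ of a quadratic refinement $q$ on the primitive class --- is genuinely different from the paper's proof, and your evaluation of $(d,\epsilon)$ on the original data and on Hirose's two normal forms is correct. The paper instead argues constructively: it adds one trivial handle, reduces $S+1(0,0)$ by the explicit moves (\ref{eq1})--(\ref{eq9}) to $1(\mathrm{gcd},\mathrm{gcd})+\sum 1(0,0)$ or $1(\mathrm{gcd},0)+\sum 1(0,0)$ according to the parity of $\sum_j m_jn_j/(\mathrm{gcd})^2$, and then transfers the conclusion back to $S$ using two cited facts: $1(k,k)\not\sim 1(k,0)$ for $k\neq 0$ (Livingston) and stability of the two types under addition of trivial 1-handles (Sanderson, Carter--Kamada--Saito--Satoh).

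However, your proof has a genuine gap exactly at the step you yourself flag as the ``main obstacle'', and that step is not the routine verification you suggest. You need invariance of $\epsilon$ under \emph{arbitrary} equivalences of 2-dimensional braids, not merely under the 1-handle moves of Section \ref{sec4}: Theorem \ref{thm1} asserts an equivalence, so your quantity must be constant on full equivalence classes before you may ``evaluate on normal forms''. An arbitrary equivalence acts on the chart data through an extendable diffeomorphism of the standard surface \emph{together with a possible change of the framing} of $N(F)$, since the vector $(m_j,n_j)_j$ is only defined relative to the standard framing. Ruling out the $q$-violating transformations --- in particular the odd shear carrying $1(k,k)$ to $1(k,0)$ --- is therefore not something ``the blackboard-framing normalization guarantees''; it is precisely the content of Remark \ref{rem4-1}, and in the paper that remark is \emph{justified by} Livingston's theorem that $1(k,k)$ and $1(k,0)$ are distinct. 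As written, your argument is thus circular at its crux: you invoke the framing normalization to exclude odd shears, while the well-definedness of that normalization (up to the even move (\ref{eq2})) rests on the very inequivalence your $\epsilon$ is supposed to certify. A non-circular completion is possible --- for instance, take $q$ to be the Rokhlin/Guillou--Marin quadratic function of the embedded standard surface, which is preserved by every ambient diffeomorphism and satisfies $q(a_j)=q(b_j)=0$, and then carry out the framing-correction analysis for how chart classes transform under fiber-preserving isotopies --- but that is a substantial piece of work (essentially re-deriving the Livingston-type input rather than citing it), not a routine check, and it is absent from your proposal.
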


We consider $(F, \Gamma)$ with repeated pattern for any surface-knot $F$. 
By an addition of a 1-handle $H=1(1,0)$ to $(F, \Gamma)$, we can gather all the chart loops on $H$. 

\begin{theorem}\label{thm5}
 For $(F, \Gamma)$ with repeated pattern for any surface-knot $F$, by 1-handle moves, $(F, \Gamma)+1(1,0)$ is deformed to 
\begin{equation*}
(F, \emptyset) + h(1,\delta),
\end{equation*}
where $h$ is a 1-handle attached to $F$ and $\delta=0$ or $1$. 
\end{theorem}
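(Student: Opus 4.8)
The plan is to establish Theorem~\ref{thm5} in two stages: first use 1-handle moves to sweep every band of the repeated-pattern chart $\Gamma$ onto the single added handle, emptying the chart on $F$; then normalize the winding accumulated along the core loop of that handle modulo $2$. Recall that a repeated-pattern chart $\Gamma$ on $F$ is a finite union of bands of mutually parallel loops, each band presenting the pattern braid $b$, and that the 2-dimensional braid it defines is determined by how these bands wind around generators of $H_1(F;\mathbb{Z})$. The added handle $H=1(1,0)$ is trivial in $P\backslash G(F)/P$; its cocore presents a single copy of $b$ (as $m=1$) while its core loop presents the trivial braid (as $n=0$), and its cocore bounds the core disk $B^2\times\{0\}$ of $H$. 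The point of choosing $m=1$ is that $H$ carries exactly one copy of $b$ localized at a loop bounding a disk, which I intend to use as a conduit for absorbing the bands of $\Gamma$.

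Next I would gather. Using the 1-handle moves of Section~\ref{sec4}, I would slide the feet of $H$ along the bands of $\Gamma$ and push each band across the cocore of $H$. Since the cocore presents one copy of $b$, each such passage can be arranged to replace a band on $F$ by an arc running along the core of $H$, transferring that band's pattern power to the core loop while removing the band from $F$. Carrying this out band by band clears $\Gamma$ entirely, leaving the empty chart on $F$; in the process $H$ is dragged along the loops it absorbs, so the trivial handle is replaced by some $h\in P\backslash G(F)/P$ with $m=1$ and core-loop winding $b^n$ for an integer $n$ determined by the total winding of $\Gamma$ relative to $h$.

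Finally I would reduce $n$. Here I would appeal to the parity mechanism already visible in Theorem~\ref{thm4}: for a handle with $m=1$ there is a 1-handle move---a rotation of the handle, equivalently a self-passage of the core loop across the cocore (which presents $b$)---that changes the core-loop winding by $\pm 2$ while fixing $m$ and the class $h$. Iterating this move brings $n$ to $\delta\in\{0,1\}$ with $\delta\equiv n\pmod 2$, which yields $(F,\emptyset)+h(1,\delta)$ as claimed.

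I expect the gathering step to be the main obstacle. Since $F$ is an arbitrary surface-knot, not one presented by a chosen sequence of handle additions as in Theorems~\ref{thm2}--\ref{thm4}, the argument cannot lean on a preferred handle decomposition and must use only the local picture near $H$ together with the freedom granted by $m=1$. One must verify that the available 1-handle moves genuinely suffice to absorb every band of an arbitrary repeated-pattern chart, and must track faithfully how the class $h$ and the winding $n$ evolve as successive bands are swallowed.
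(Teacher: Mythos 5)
Your proposal is correct and follows essentially the same route as the paper: the paper's proof likewise absorbs each label-$(1)$ loop of $\Gamma$ onto $H=1(1,0)$ by a CI-M2-move followed by sliding an end of $H$ along the loop (which keeps $m=1$, conjugates the handle class to some $h'=\alpha\rho\alpha^{-1}$, and transfers the winding to the core-loop coordinate), and then reduces the accumulated winding $n'$ modulo $2$ using the twist/crossing-change move $h(m,n)\sim h(m,n\pm 2m)$ of Lemma \ref{lem4-3} (with Lemma \ref{lem4-2} if necessary), exactly the rotation-by-two mechanism you describe. The only slips are cosmetic: the copy of $b$ carried by $1(1,0)$ is a chart loop running along the core loop (not a loop bounding a disk), and the parity move is Lemma \ref{lem4-3}, i.e.\ relation (\ref{eq2}), rather than something extracted from Theorem \ref{thm4}.
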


Next we consider $(F, \Gamma)$ for any surface-knot $F$ where we may remove the condition of connectedness, and a chart $\Gamma$ of degree $N$ which does not contain black vertices (see Section \ref{sec2-2}): we consider a 2-dimensional braid over $F$ without branch points. 
We denote by $h(a,b)$ a 1-handle $h$ with a chart without black vertices, attached to $F$, such that the chart is contained in the union of regular neighborhoods in $F+h$ of the core loop and cocore, and the cocore and the core loop present braids $a$ and $b$, respectively. In particular, we consider 1-handles with chart loops. Let $\sigma_1, \ldots, \sigma_{N-1}$ be the standard generators of the braid group $B_N$. We denote by $h(e, e)$ the 1-handle $h$ with an empty chart, by $h(\sigma_i, e)$ $h$ with the chart consisting of a loop with the label $i$ along the core loop, and by $h(\sigma_i, \sigma_j^{\epsilon})$ $(|i-j|>1, \epsilon\in \{+1, -1\})$ $h$ with the chart consisting of a chart loop along the core loop with the label $i$ at the base point (see Section \ref{sec2-2}) and a loop along the cocore with the label $j$, with orientations determined from the signs of $\sigma_i$ and $\sigma_j^\epsilon$. 
Note that $h(e,e)$ equals $h(0,0)$ used for the repeated pattern. 
Different from 2-dimensional braids with repeated pattern, a 1-handle with chart loops is not always determined only from its presentation (Remark \ref{rem6-1}); hence, except in special cases such as when $\Gamma$ is an empty chart,  
we need to assign where we add a 1-handle. 
 
\begin{theorem}\label{thm6}
Let $(F, \Gamma)$ be a 2-dimensional braid for any surface-knot $F$ and a chart $\Gamma$ without black vertices. 
Let $N$ be the degree of $\Gamma$. 
By an addition of finitely many 1-handles in the form $1(\sigma_i, e)$ or $1(e,e)$ $(i \in \{1, \ldots,N-1\})$, to appropriate places in $F$, $(F, \Gamma)$ is deformed to 
\begin{equation}\label{eq:6-2}
(F, \emptyset) +\sum_\lambda H_\lambda,
\end{equation}
where $H_\lambda=h_\lambda(\sigma_i, e)$,  $h_\lambda(\sigma_i, \sigma_j^{\epsilon})$ or $h_\lambda(e,e)$ for 1-handles $h_\lambda$ attached to $F$, $i,j \in \{1.\ldots,N-1\}, |i-j|>1$ and $\epsilon \in \{+1, -1\}$. 

In particular, by an addition of 1-handles $\sum_{j=1}^{N-1}1(\sigma_j, e)$ and finitely many $1(e,e)$, to a fixed 2-disk in $F$, $(F, \Gamma)$ is deformed to 
\begin{equation}\label{eq12}
(F, \emptyset) +\sum_{j=1}^{N-1}h_j(\sigma_j, e)+\sum_\lambda 1_\lambda,
\end{equation}
where $h_1, \ldots, h_{N-1}$ are 1-handles attached to $F$, and $1_\lambda=1(\sigma_i, \sigma_j^{\epsilon})$ or $1(e,e)$ $(i,j \in \{1.\ldots,N-1\}, |i-j|>1$ and $\epsilon \in \{+1, -1\})$. 
\end{theorem}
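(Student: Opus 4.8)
The plan is to remove $\Gamma$ from the body of $F$ one local piece at a time, combining the $1$-handle moves of Section~\ref{sec4} with ordinary chart $C$-moves, so that each elementary piece of $\Gamma$ is carried onto a single $1$-handle of one of the three types appearing in \eqref{eq:6-2}; for the second assertion I then transport the resulting handles to the fixed $2$-disk. I would argue by induction on the complexity of $\Gamma$, ordered lexicographically by the number of white vertices together with crossings first, and the total number of edges second. The hypothesis that $\Gamma$ has no black vertices is what makes this feasible: away from the white vertices and crossings, $\Gamma$ is a disjoint union of embedded arcs and loops labelled by $\sigma_1,\dots,\sigma_{N-1}$, with no free ends to obstruct the cutting used below. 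As a first normalization I isotope $\Gamma$ off a chosen $2$-disk $D_0$, which will be the site for the added handles.

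The elementary absorption step runs as follows. For a chart loop of $\Gamma$ with constant label $\sigma_i$, I attach a handle $1(\sigma_i,e)$ whose core loop is isotopic in $F+h$ to that chart loop, and then a $1$-handle move slides the chart loop onto the tube of the handle, leaving that portion of $F$ empty and producing a handle $h_\lambda(\sigma_i,e)$. A crossing of two strands with labels $i$ and $j$, $|i-j|>1$, is absorbed in the same manner onto a single handle, which then carries a loop of label $i$ along the core loop and a loop of label $j$ along the cocore, that is, a handle $h_\lambda(\sigma_i,\sigma_j^{\epsilon})$; any handle whose chart has been completely cleared becomes a trivial handle $h_\lambda(e,e)$. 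Each such step strictly lowers the complexity, so iterating yields the form \eqref{eq:6-2} with handles placed at appropriate sites, provided the white vertices can be removed.

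The step I expect to be the crux is the elimination of the white (degree-six) vertices, since \eqref{eq:6-2} records only $\sigma_i$-loops, far-commutation handles, and trivial handles, with no handle type encoding the braid relation $\sigma_i\sigma_{i+1}\sigma_i=\sigma_{i+1}\sigma_i\sigma_{i+1}$. My plan is first to apply $C$-moves bringing the three $\sigma_i$-edges and three $\sigma_{i+1}$-edges incident to such a vertex into a standard local model, then to absorb the incident strands onto handles $h(\sigma_i,e)$ and $h(\sigma_{i+1},e)$; since adjacent handles of consecutive labels can be reordered by $1$-handle moves, and this reordering realizes precisely the braid relation, the white vertex should then be cancellable by a $C$-move, strictly decreasing the leading complexity parameter. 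Verifying that this local surgery is compatible with the $1$-handle moves and creates no black vertices is the delicate point, and is exactly where the no-black-vertex hypothesis enters essentially.

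For the \lq\lq in particular" part I place the $N-1$ handles $h_1(\sigma_1,e),\dots,h_{N-1}(\sigma_{N-1},e)$ together with finitely many trivial handles on the single $2$-disk $D_0$, and then slide each handle $h_\lambda(\sigma_i,e)$ produced above, with its chart loop, along $F$ into $D_0$ and merge it with $h_i(\sigma_i,e)$; this uses that parallel loops of the same label lying on a common handle combine by $C$-moves, so that the core loop of $h_i$ can absorb every loop of label $i$. Each crossing created while routing one handle past another is recorded by a far-commutation handle $1(\sigma_i,\sigma_j^{\epsilon})$ and each emptied handle by $1(e,e)$, which are precisely the handle types $1_\lambda$ permitted in \eqref{eq12}. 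The residual difficulty here is bookkeeping: one must check that the merging of several same-label loops into one core loop is genuinely realizable by $C$-moves and $1$-handle moves and that no handle outside the stated list is introduced, both of which reduce to the local models established in the first part.
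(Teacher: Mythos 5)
Your outline of the loop-absorption and crossing-absorption steps is broadly consistent with what the paper actually does (the loop step is essentially the proof of Theorem \ref{thm5}, and the crossing bookkeeping is Lemma \ref{lem6-7}), but your treatment of white vertices --- which you yourself identify as the crux --- does not work. The mechanism you propose, that ``adjacent handles of consecutive labels can be reordered by 1-handle moves, and this reordering realizes precisely the braid relation,'' has no support: every 1-handle move of Section \ref{sec4} that lets one handle interact with another (the slides (\ref{eq4})--(\ref{eq9})) is driven by CI-M2-moves applied to chart loops \emph{of the same label}, and chart edges with labels $i$ and $i+1$ cannot even cross transversally in a chart (degree-4 vertices require labels $i,j$ with $|i-j|>1$), so there is no move that commutes a handle carrying a $\sigma_i$-loop past one carrying a $\sigma_{i+1}$-loop. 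Moreover, no C-move removes a \emph{single} white vertex: a CI-M3-move cancels white vertices in pairs, and a CIII-move requires a black vertex, which is excluded by hypothesis. So your induction step, which deletes one white vertex at a time, cannot be realized; indeed, in a chart without black vertices the number of white vertices of each type $(i,i+1)$ is even, and all moves available here change that number by an even amount.

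The paper's proof handles this point differently, and you would need something like it: it first adds $\sum_{j=1}^{N-1}1(\sigma_j,e)$ together with many $1(e,e)$'s (Lemmas \ref{lem6-3} and \ref{lem6-4} allow these to be converted and transported anywhere), then uses the ``bridge'' construction of Lemma \ref{lem6-5} to slide an end of a handle $1(\sigma_i,e)$ along the diagonal edges through successive white vertices. Because there are no black vertices, the diagonal edges form closed circuits, so all white vertices on such a circuit (an even number) are gathered onto one handle, where they are cancelled \emph{pairwise} by CI-M3-moves, with Lemma \ref{lem6-6} supplying an extra handle to reverse orientations whenever a CI-M3-move does not directly apply. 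Only after all white vertices are gone does the paper absorb the remaining loops and crossings onto the handles at the fixed disk, and it closes with a step your ``in particular'' argument also needs: the auxiliary handles created while gathering white vertices end up carrying empty charts, and the ambient deformation is then reversed so that they become trivial handles again, which is what makes the extra summands in (\ref{eq12}) of the form $1_\lambda$ rather than $h_\lambda(e,e)$. Without a pairwise cancellation scheme of this kind, your argument has a genuine gap at exactly the step you flagged as delicate.
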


Note that since the resulting chart on $F$ is an empty chart, the presentations (\ref{eq:6-2}) and (\ref{eq12}) are well-defined. 

\begin{definition}\label{def1-7}
We call the minimal number of 1-handles necessary to make $(F, \Gamma)$ in the form (\ref{eq:6-2}) the {\it weak unbraiding number} of $(F, \Gamma)$, which is denoted by $u_w(F, \Gamma)$. 
\end{definition}

\begin{theorem}\label{thm7}
Let $(F, \Gamma)$ be a 2-dimensional braid as in Theorem \ref{thm6}. 
 By an addition of finitely many 1-handles in the form $1(\sigma_i, e)$, $1(\sigma_i, \sigma_j^\epsilon)$ or $1(e,e)$ $(i, j \in \{1, \ldots,N-1\}, |i-j|>1, \epsilon \in \{+1, -1\})$, to appropriate places in $F$, a 2-dimensional braid $(F, \Gamma)$ is deformed to  
\begin{equation}\label{eq:7}
(F, \emptyset) +\sum_{\lambda} H_\lambda,
\end{equation}
where $H_\lambda=h_\lambda(\sigma_i, e)$ or $h_\lambda(e,e)$ 
for 1-handles $h_\lambda$ attached to $F$ and  $i \in \{1,\ldots, N-1\}$.

In particular, by an addition of 1-handles $\sum_{j=1}^{N-1}1(\sigma_j, e)$ and finitely many 1-handles in the form $1(\sigma_i, \sigma_j^\epsilon)$ $(i, j \in \{1, \ldots,N-1\}, |i-j|>1, \epsilon \in \{+1, -1\})$ or $1(e,e)$, to a fixed 2-disk in $F$, $(F, \Gamma)$ is deformed to 
\begin{equation*}
(F, \emptyset) +\sum_{j=1}^{N-1}h_j(\sigma_j, e)+\sum_\lambda 1(e,e),  
\end{equation*}
where $h_1, \ldots, h_{N-1}$ are 1-handles attached to $F$. 
\end{theorem}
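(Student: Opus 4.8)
The plan is to deduce Theorem \ref{thm7} from Theorem \ref{thm6} by eliminating, one handle at a time, the \emph{mixed} handles $h_\lambda(\sigma_i,\sigma_j^\epsilon)$ (those carrying a chart loop on both the core loop and the cocore) that survive in the normal form (\ref{eq:6-2}). First I would apply Theorem \ref{thm6} to put $(F,\Gamma)$ into the form $(F,\emptyset)+\sum_\lambda H_\lambda$ with each $H_\lambda$ one of $h_\lambda(\sigma_i,e)$, $h_\lambda(\sigma_i,\sigma_j^\epsilon)$, $h_\lambda(e,e)$. Handles of the first and third types already have the shape demanded by (\ref{eq:7}), so the entire content of the theorem is the removal of the cocore chart $\sigma_j^\epsilon$ from each mixed handle. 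I would emphasize that this is exactly the step for which Theorem \ref{thm6} was not permitted to introduce handles carrying cocore charts, whereas here the addition of $1(\sigma_i,\sigma_j^\epsilon)$-handles is allowed; that extra freedom is precisely what is being exploited.

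The key local statement I would isolate as a lemma is: a single mixed handle $h_\lambda(\sigma_i,\sigma_j^\epsilon)$, after the addition of one standard handle $1(\sigma_i,\sigma_j^{-\epsilon})$ nearby and a sequence of 1-handle moves, becomes equivalent to handles of the forms $h(\sigma_i,e)$ and $1(e,e)$ only. The mechanism I have in mind is a handle slide of $h_\lambda$ over the auxiliary handle. Since $|i-j|>1$, the cocore loop of $h_\lambda$ is labelled by the generator $\sigma_j$, which commutes with $\sigma_i$; hence throughout the slide this $\sigma_j$-labelled meridian can be carried past every $\sigma_i$-labelled core arc as a mere crossing, with no chart vertex created. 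Arranging the slide so that the two $\sigma_j$-meridians (that of $h_\lambda$ and that of the auxiliary handle, carrying opposite signs $\epsilon$ and $-\epsilon$) run antiparallel, the resulting pair cancels by a chart move, leaving the two handles carrying only core ($\sigma_i$-type) charts. Repeating over all $\lambda$ clears every cocore chart and yields (\ref{eq:7}); termination is automatic because each step strictly decreases the number of cocore charts while adding only cocore-free handles.

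For the ``in particular'' statement I would start from the corresponding normal form of Theorem \ref{thm6}, namely $(F,\emptyset)+\sum_{j=1}^{N-1}h_j(\sigma_j,e)+\sum_\lambda 1_\lambda$ with $1_\lambda=1(\sigma_i,\sigma_j^\epsilon)$ or $1(e,e)$, and apply the lemma above to each standard mixed handle $1(\sigma_i,\sigma_j^\epsilon)$ on the fixed 2-disk. After cancelling its cocore chart it becomes a handle carrying only a core $\sigma_i$-chart, which I then slide onto the designated reservoir handle $h_i(\sigma_i,e)$, emptying the auxiliary handle to a $1(e,e)$-handle. This consolidation onto the reservoir handles is the same device already used in the ``in particular'' part of Theorem \ref{thm6}, now performed after the cocore charts have been removed, so the two reductions combine to give the stated form $(F,\emptyset)+\sum_{j=1}^{N-1}h_j(\sigma_j,e)+\sum_\lambda 1(e,e)$.

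The step I expect to be the main obstacle is the local lemma, specifically the precise verification that the slide over $1(\sigma_i,\sigma_j^{-\epsilon})$ genuinely \emph{cancels} the cocore chart rather than merely relocating it. This demands tracking the chart in a neighborhood of the base point (the crossing of the commuting $i$- and $j$-labelled edges) throughout the slide, checking that orientations and signs can be matched so that a genuine cancelling configuration is produced, and confirming that no new cocore chart is generated on the auxiliary handle when the cores are combined. Getting this orientation bookkeeping correct — so that one really lands on $h(\sigma_i,e)$ and $1(e,e)$ and not on a handle with a residual cocore chart — is where the argument must be most careful, and the hypothesis $|i-j|>1$ is used essentially, since it is what allows the $\sigma_j$-meridian to pass the $\sigma_i$-core arcs freely during the slide.
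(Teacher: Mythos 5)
Your proposal is correct and follows essentially the same route as the paper: the paper likewise reduces Theorem \ref{thm7} to the single cancellation lemma that a pair $1(\sigma_i,\sigma_j)+1(\sigma_i,\sigma_j^{-1})$ ($|i-j|>1$) becomes handles with crossing-free chart loops, proved by sliding one handle over the other as in Lemma \ref{lem4-5} so that the two antiparallel $j$-labelled loops land on one handle and are eliminated by CI-M2-, CI-R2- and CI-M1-moves, after which the remnants are absorbed into the reservoir handles. The one detail you gloss over (as does the paper) is that the slide actually leaves $1(\sigma_i,e)+1(e,\sigma_j^{-1})$, so one resulting handle carries its loop along the cocore and still needs the core/cocore exchange of Lemma \ref{lem4-4} to reach the form $1(\sigma_j,e)$ required by (\ref{eq:7}).
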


\begin{definition}\label{def1-9}
We call the minimal number of 1-handles necessary to make $(F, \Gamma)$ in the form (\ref{eq:7}) the {\it unbraiding number} of $(F, \Gamma)$, which is denoted by $u(F, \Gamma)$. 
\end{definition}

\begin{proposition}\label{prop1}
Let $(F, \Gamma)$ be a 2-dimensional braid for any surface-knot $F$ and a chart $\Gamma$ of degree $N$ without black vertices. 
Then we have 
\[
 u_w(F, \Gamma) \leq u(F, \Gamma)\leq u_w(F, \Gamma)+c_{\mathrm{alg}}(\Gamma), \]
where $c_{\mathrm{alg}}(\Gamma)$ is the sum of the absolute values of algebraic sums of the numbers of crossings of type $(i,j)$ in $\Gamma$ for $i<j$ $(i,j \in \{1,\ldots,N-1\})$ (see Definition \ref{def6-7}). 
\end{proposition}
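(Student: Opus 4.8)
The plan is to unpack the statement into its three constituent inequalities and treat them in turn: the two lower bounds $u_w(F,\Gamma)\le u(F,\Gamma)$ and $c_{\mathrm{alg}}(\Gamma)\le u(F,\Gamma)$, and the upper bound $u(F,\Gamma)\le u_w(F,\Gamma)+c_{\mathrm{alg}}(\Gamma)$. The inequality $u_w\le u$ I would read off directly from the definitions: every family of 1-handles putting $(F,\Gamma)$ into the form (\ref{eq:7}) automatically puts it into the form (\ref{eq:6-2}), since the admissible summands $h_\lambda(\sigma_i,e)$ and $h_\lambda(e,e)$ of (\ref{eq:7}) are among those allowed in (\ref{eq:6-2}), which merely in addition permits the crossing handles $h_\lambda(\sigma_i,\sigma_j^\epsilon)$. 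Taking the minimum over the (larger) family of realizations of (\ref{eq:6-2}) therefore gives $u_w\le u$.

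For $c_{\mathrm{alg}}\le u$ I would treat $c_{\mathrm{alg}}$ as a controlled invariant. Writing $A_{ij}$ for the algebraic (signed) number of $(i,j)$-crossings, so that $c_{\mathrm{alg}}=\sum_{i<j}|A_{ij}|$, the first step is to verify that each $A_{ij}$ is preserved by all chart moves and 1-handle moves: the creation/annihilation moves produce $(i,j)$-crossings in oppositely signed pairs, the slide moves and white-vertex moves carry crossings across without altering their signed count, and 1-handle moves are realized as chart moves on the surface diagram. The second step is a local computation showing that one elementary handle addition changes $\sum_{i<j}|A_{ij}|$ by at most $1$: the handles $1(\sigma_i,e)$ and $1(e,e)$ leave every $A_{kl}$ unchanged, while $1(\sigma_i,\sigma_j^\epsilon)$ alters only $A_{ij}$, by $\pm1$. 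Since the terminal form (\ref{eq:7}) carries the empty chart on $F$ together with single-label core loops on mutually disjoint handles, its total chart has $c_{\mathrm{alg}}=0$; hence descending from $c_{\mathrm{alg}}(\Gamma)$ to $0$ needs at least $c_{\mathrm{alg}}(\Gamma)$ handle additions, which is $c_{\mathrm{alg}}(\Gamma)\le u(F,\Gamma)$.

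For the upper bound I would construct an explicit unbraiding. First apply a minimal weak unbraiding (Theorem \ref{thm6}), using $u_w(F,\Gamma)$ handles of types $1(\sigma_i,e)$ and $1(e,e)$, to reach the form (\ref{eq:6-2}). Because these handles preserve every $A_{kl}$ by the step above, all $(i,j)$-crossings of the resulting total chart sit on the crossing handles $h_\lambda(\sigma_i,\sigma_j^\epsilon)$, and the signed number of those of type $(i,j)$ equals $A_{ij}(\Gamma)$. Next I would cancel crossing handles of type $(i,j)$ with opposite signs in pairs, by a 1-handle move converting such a pair into non-crossing handles without increasing the number of handles; after this exactly $|A_{ij}(\Gamma)|$ crossing handles of each type survive, $c_{\mathrm{alg}}(\Gamma)$ in total. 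Finally, each surviving $h(\sigma_i,\sigma_j^\epsilon)$ is traded for handles of the admissible types $h(\sigma_i,e),\,h(e,e)$ at the cost of one added 1-handle. This yields the form (\ref{eq:7}) with $u_w(F,\Gamma)+c_{\mathrm{alg}}(\Gamma)$ handles, whence $u(F,\Gamma)\le u_w(F,\Gamma)+c_{\mathrm{alg}}(\Gamma)$.

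The main obstacle I anticipate is the pair of explicit 1-handle constructions in the upper bound: realizing the cancellation of two oppositely signed crossing handles by a 1-handle move that leaves the handle count unchanged, and trading a single crossing handle $h(\sigma_i,\sigma_j^\epsilon)$ for non-crossing handles using exactly one extra handle. Both require drawing the chart on the relevant core loop and cocore and checking that the commutation relation $\sigma_i\sigma_j=\sigma_j\sigma_i$ for $|i-j|>1$ allows the crossing to be transferred off the handle. The other delicate point is the bookkeeping of $A_{ij}$ under handle addition, in particular verifying that sliding a single-label loop over existing chart edges during a 1-handle move creates crossings only in cancelling pairs, so that $1(\sigma_i,e)$ indeed leaves $c_{\mathrm{alg}}$ unchanged.
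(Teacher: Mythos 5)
Your proposal is correct and matches the paper's own (much terser) proof: the paper likewise treats $u_w(F,\Gamma)\le u(F,\Gamma)$ as immediate from the definitions, gets $c_{\mathrm{alg}}(\Gamma)\le u(F,\Gamma)$ from the fact that only additions of handles of the form $1(\sigma_i,\sigma_j^{\epsilon})$ can change the signed crossing counts (which are otherwise invariant since $\Gamma$ has no black vertices), and obtains the upper bound by converting a minimal weak unbraiding into an unbraiding via the cancellation $1(\sigma_i,\sigma_j^{\epsilon})+1(\sigma_i,\sigma_j^{-\epsilon})\sim 1(\sigma_i,e)+1(e,\sigma_j^{-\epsilon})$ established in the proof of Theorem \ref{thm7}. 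The explicit constructions you flag as potential obstacles are exactly what that proof supplies (a slide as in Lemma \ref{lem4-5} followed by CI-M2, CI-R2 and CI-M1 moves), so your argument is essentially a more detailed write-up of the paper's.
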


A chart edge is called a {\it free edge} if it is connected with two black vertices at its end points. 
A chart consisting of free edges is called an {\it unknotted chart}, which, if drawn on the standard surface, presents an unknotted surface-knot \cite{Kamada99, Kamada02}.
It is known \cite{Kamada99} that an addition of free edges to a chart $\Gamma$, to appropriate places, deforms $\Gamma$ to an unknotted chart. 
The {\it unknotting number}, denoted by $u(\Gamma)$, of a chart $\Gamma$ is the minimal number of such free edges necessary to make $\Gamma$ an unknotted chart. For a chart $\Gamma$ of degree $N$, \cite{Kamada99, Kamada02} implies that $u(\Gamma) \leq w(\Gamma)+N-1$, where $w(\Gamma)$ is the number of white vertices.  

\begin{proposition}\label{prop2}
For a 2-dimensional braid $(F, \Gamma)$ as in Proposition \ref{prop1}, we have
\[
u_w(F, \Gamma) \leq w(\Gamma)+2c(\Gamma)+N-1, 
\]
where $w(\Gamma)$ and $c(\Gamma)$ are the numbers of white vertices and crossings in $\Gamma$, respectively. 
\end{proposition}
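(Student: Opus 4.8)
The plan is to prove the two inequalities separately. For the lower bound $u(\Gamma)\le u_w(F,\Gamma)$, I would exploit the correspondence between a $1$-handle with chart loops and a free edge obtained by cutting along the cocore. For the upper bound $u_w(F,\Gamma)\le w(\Gamma)+2c(\Gamma)+N-1$, I would give an explicit construction that empties the chart, adding one handle for the base of each generator, one handle per white vertex, and two handles per crossing, and then invoke Theorem \ref{thm6} to certify that the resulting configuration is of the form (\ref{eq:6-2}).

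For the lower bound, I would first record the local fact that cutting a $1$-handle $h(\sigma_i,e)$ along its cocore disk $B^2\times\{0\}$ opens the handle and transforms its core loop into an edge on $F$ whose two endpoints are capped by black vertices, i.e.\ a free edge with label $i$; cutting $h(\sigma_i,\sigma_j^{\epsilon})$ similarly yields a free edge labeled $i$ together with a trivial loop labeled $j$ bounding the cocore disk, and cutting $1(e,e)$ simply removes the handle. Suppose $u_w(F,\Gamma)=n$ is realized by adding $n$ handles of type $1(\sigma_i,e)$ or $1(e,e)$ and deforming to the form $(F,\emptyset)+\sum_\lambda H_\lambda$ of (\ref{eq:6-2}) by $1$-handle moves. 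I would cut the cocores of all these handles. Since cutting along the cocore is compatible with the $1$-handle moves, the right-hand side becomes a chart on $F$ consisting of free edges and trivial loops, which is equivalent to an unknotted chart; while cutting the cocores on the left returns $\Gamma$ with at most $n$ free edges adjoined. Hence $\Gamma$ is equivalent to an unknotted chart after adjoining at most $n$ free edges, giving $u(\Gamma)\le n=u_w(F,\Gamma)$.

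For the upper bound, I would construct a sequence of additions of handles of types $1(\sigma_j,e)$ and $1(e,e)$ that empties $\Gamma$. First add the $N-1$ base handles $1(\sigma_1,e),\dots,1(\sigma_{N-1},e)$; their core loops serve as reservoirs along which chart pieces can be transported, accounting for the summand $N-1$. Next, for each white vertex I would use a single additional $1$-handle to absorb the hexagonal neighborhood (the braid relation) into the reservoirs, removing the vertex; this costs $w(\Gamma)$ handles, in parallel with the bound $u(\Gamma)\le w(\Gamma)+N-1$ of \cite{Kamada99, Kamada02}. Then, for each crossing of type $(i,j)$ with $|i-j|>1$, I would use two additional $1$-handles to surger the two transverse strands and move the crossing onto a handle in the form $h_\lambda(\sigma_i,\sigma_j^{\epsilon})$; this costs $2c(\Gamma)$ handles. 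After these operations every white vertex and crossing has been removed from $F$, and the remaining edges are simple closed loops that are absorbed into the base reservoirs, so the chart on $F$ becomes empty. By Theorem \ref{thm6} the result is of the form (\ref{eq:6-2}), and the total number of handles used is $(N-1)+w(\Gamma)+2c(\Gamma)$, giving the claimed bound.

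The main obstacle I expect is the crossing step in the upper bound, namely verifying that two $1$-handles of the allowed types genuinely suffice to transfer a single crossing onto a handle while leaving no residual chart on $F$ and creating no new white vertices or crossings; this requires a careful local chart-and-handle analysis, and it is precisely this step that forces the coefficient $2$ in $2c(\Gamma)$ and distinguishes weak unbraiding from the unknotting bound, where crossings among free edges may be left in place. A secondary point needing care is the compatibility of cocore-cutting with the $1$-handle moves used in the lower bound, which must be checked so that the equivalence in (\ref{eq:6-2}) descends to an equivalence of charts on $F$.
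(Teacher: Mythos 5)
Your lower bound contains the serious gap. The cocore-cutting argument hinges on the claim that cutting descends along the equivalence $(F,\Gamma)+\sum 1(\cdot)\sim(F,\emptyset)+\sum_\lambda H_\lambda$, but this compatibility genuinely fails: 1-handle moves change the braid presented by a cocore. Already relation (\ref{eq4}) turns $h'(m',n')$ into $h'(m'-m,n')$, and in the proof of Lemma \ref{lem6-7} a handle $1(\sigma_i,\sigma_{i_k}^{\epsilon_k})$ is deformed into $1(e,\sigma_{i_k}^{\epsilon_k})$; cutting the cocore of the former produces a free edge (two black vertices), cutting the latter produces none. Since C-moves and Roseman moves preserve the number of black vertices, the chart obtained by cutting before such a move is not equivalent to the chart obtained by cutting after it, so you cannot conclude that cutting the initial configuration along its cocores and cutting the final configuration along the cocores of the $H_\lambda$ give equivalent charts. (Put differently: the ambient isotopy realizing the equivalence carries your cocores to circles on the final surface that need not be the cocores of the $H_\lambda$, and the surgery result depends on the choice of circles.) The paper avoids surgery entirely and argues by forward simulation: in an optimal unbraiding sequence, replace each added $1(\sigma_i,e)$ by a free edge labelled $i$; a free edge can perform the same local work (a CI-M2-move followed by a CIII-move removes a white vertex, as in \cite{Kamada99}), so replaying the sequence unknots $\Gamma$ with at most $u_w(F,\Gamma)$ free edges.

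Your upper bound follows the paper's skeleton ($N-1$ reservoir handles $\sum_{j=1}^{N-1}1(\sigma_j,e)$, $w(\Gamma)$ handles for white vertices, and $2c(\Gamma)$ for crossings, realized in the paper by ``double bridges'': a handle $H$ on $F$ and a second handle $H'$ attached to $H$ carrying the crossing, done first so later end-slides meet no crossings), but the count as you state it does not close, and the problem is not only the crossing step you flag. Cancelling two adjacent white vertices by a CI-M3-move requires compatible orientations, and reversing the orientations around a vertex costs an additional 1-handle (Lemma \ref{lem6-6}); naively this needs up to $w(\Gamma)/2$ handles beyond your budget of one per vertex. The paper closes this by a parity-and-recycling argument: since $\Gamma$ has no black vertices, its white vertices lie on embedded circles of diagonal edges, each carrying an even number (at least two) of them, so only $m\le w(\Gamma)/2$ of the $w(\Gamma)$ auxiliary handles actually end up carrying white vertices, and the remaining $w(\Gamma)-m\ge w(\Gamma)/2$ are recycled (via $\sum_{j=1}^{N-1}1(\sigma_j,e)$ and Lemma \ref{lem6-4}) to serve as orientation reversers. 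Without this step, the bound $u_w(F,\Gamma)\le w(\Gamma)+2c(\Gamma)+N-1$ is not established.
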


Further we consider $(F, \Gamma)$ for any surface-knot $F$ and any chart $\Gamma$. Then Theorems \ref{thm6} and \ref{thm7} hold true when we change the resulting $(F, \emptyset)$ to $(F, \Gamma_0)$, where $\Gamma_0$ is an unknotted chart. 
Propositions \ref{prop1} and \ref{prop2} hold true with unchanged (see Section \ref{sec7}). 

Let $b(\Gamma)$ be the number of black vertices in $\Gamma$. If $b(\Gamma) \geq 2(N-1)$, then we can simplify the results in Theorems \ref{thm6} and \ref{thm7} as follows. 

\begin{theorem}\label{thm12}
Let $(F, \Gamma)$ be a 2-dimensional braid for any surface-knot $F$ and any chart $\Gamma$. 
Let $N$ be the degree of $\Gamma$ and let $b(\Gamma)$ be the number of black vertices in $\Gamma$. If $b(\Gamma) \geq 2(N-1)$, then, 
 by an addition of 1-handles $\sum_{j=1}^{N-1}1(\sigma_j, e)$ and finitely many $1(e,e)$, to a fixed 2-disk in $F$, $(F, \Gamma)$ is deformed to 
\begin{equation*}
(F, \Gamma_0)+\sum_\lambda 1(e,e),
\end{equation*}
where $\Gamma_0$ is an unknotted chart. 
\end{theorem}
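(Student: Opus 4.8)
The plan is to start from the extension of Theorem \ref{thm7} to arbitrary charts noted above, which asserts that the conclusion of Theorem \ref{thm7} remains valid once the resulting empty chart is replaced by an unknotted chart $\Gamma_0$. Applying it with the prescribed handles $\sum_{j=1}^{N-1}1(\sigma_j, e)$ and finitely many $1(e,e)$ on the fixed 2-disk deforms $(F, \Gamma)$ to
\[
(F, \Gamma_0) + \sum_{j=1}^{N-1} h_j(\sigma_j, e) + \sum_\lambda 1(e,e).
\]
The only discrepancy with the desired form $(F, \Gamma_0)+\sum_\lambda 1(e,e)$ is the family of nontrivial core loops $h_j(\sigma_j,e)$, and the role of the hypothesis $b(\Gamma)\geq 2(N-1)$ should be exactly to let the black vertices of $\Gamma$ absorb these $N-1$ loops.

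The central step is a local clearing move: a free edge of label $j$ positioned beside the handle $h_j$ can be pushed once around the core loop by 1-handle moves so as to cancel the loop, turning $h_j(\sigma_j,e)$ into the trivial handle $h_j(e,e)=1(e,e)$. The point is that a free edge realizes the trivial braid word $\sigma_j\sigma_j^{-1}$, so dragging its two black vertices along the core loop transports the monodromy $\sigma_j$ off the handle; I would verify this by following the two black vertices around the core and checking that the trace left on the base is again a disjoint union of free edges. Each handle needs one free edge and each free edge costs two black vertices, so the $N-1$ handles require precisely $2(N-1)$ black vertices, matching the hypothesis.

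What remains is to manufacture, from the $\geq 2(N-1)$ black vertices of $\Gamma$, one free edge of each label $1,\ldots,N-1$. Since the labels of the black vertices of $\Gamma$ are not under our control, I would exploit that the added handles already carry chart loops of every label: sliding a free edge of label $i$ across the chart loop of a neighbouring handle creates a short-lived white vertex with labels $\{i,i\pm 1\}$, and the associated chart move shifts the label of the free edge by one. Iterating this, any free edge can be carried to any prescribed label, so after pairing the black vertices into $N-1$ free edges I can relabel them to $1,\ldots,N-1$ and then apply the clearing move to each handle in turn; any surplus black vertices (when $b(\Gamma)>2(N-1)$) remain as free edges and are absorbed into $\Gamma_0$.

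The step I expect to be the main obstacle is controlling this relabelling-and-clearing process globally: I must ensure that each relabelling and each clearing leaves the base a disjoint union of free edges (so that $\Gamma_0$ stays unknotted) and, crucially, that clearing $h_j$ does not reintroduce braiding on an already-cleared handle $h_{j'}$. I would resolve this by performing the clearings in order of increasing label while confining each free edge to a thin collar of its target handle, so that the $N-1$ clearings are carried out on mutually disjoint regions of the fixed 2-disk.
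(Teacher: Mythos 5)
Your overall strategy is the paper's: unbraid via the black-vertex extension of the unbraiding theorems, reorganize the black vertices into free edges, relabel them using the chart loops on the added handles (this is exactly Lemma \ref{lem7-2}), and fuse each handle's loop into a free edge of the same label. But two of your steps are wrong as written.

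First, your starting point is not available under the hypotheses of Theorem \ref{thm12}. The extension of Theorem \ref{thm7} (Remark \ref{rem6-8}) achieves the clean remainder $\sum_\lambda 1(e,e)$ only by also adding handles of the form $1(\sigma_i,\sigma_j^\epsilon)$, $|i-j|>1$ (its proof cancels the crossing-loop handles left over by Theorem \ref{thm6} against newly added ones). Theorem \ref{thm12} permits adding only $\sum_{j=1}^{N-1}1(\sigma_j,e)$ and copies of $1(e,e)$; with those additions alone one gets only the extension of Theorem \ref{thm6}, namely
\[
(F,\Gamma_0)+\sum_{j=1}^{N-1}h_j(\sigma_j,e)+\sum_\lambda 1_\lambda,
\qquad 1_\lambda=1(\sigma_i,\sigma_j^\epsilon)\ \text{or}\ 1(e,e).
\]
So besides the $N-1$ handles $h_j(\sigma_j,e)$ you must also clear the trivial handles carrying crossing loops. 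This can be done by the same fusion move (free edges of labels $i$ and $j$ absorb the two loops, applying CII-moves to pass the crossings, which are available precisely because black vertices are present), and that is how the paper deals with them; your proposal, having assumed these handles away, never addresses them.

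Second, the identity $h_j(e,e)=1(e,e)$ is false in general. The $h_j$ are typically knotted 1-handles, produced by sliding ends of the originally trivial $1(\sigma_j,e)$ around $F$ during the unbraiding, and a nontrivial 1-handle with empty chart is not equivalent to a trivial one: 1-handles are classified by the double cosets $P\backslash G(F)/P$. The missing argument is the paper's closing step: because the $h_j$ arose from trivial handles by an ambient deformation, once every handle carries an empty chart one can reverse that deformation and recover trivial 1-handles. Without it you end at $(F,\Gamma_0)+\sum_j h_j(e,e)+\sum_\lambda 1(e,e)$, which is not the claimed form. (Your worry about the order of clearings, by contrast, is a non-issue: relabel all free edges first, while every handle still carries its loop, and only then clear; each clearing is a local CI-M2 fusion and cannot reintroduce braiding on another handle.)
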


The paper is organized as follows. In Section \ref{sec2}, we review 2-dimensional braids and their chart presentation, and we review equivalence moves of charts: C-moves and Roseman moves. In Section \ref{sec3}, we give a precise definition of 2-dimensional braids with repeated pattern. In Section \ref{sec4}, we introduce 1-handle moves. In Section \ref{sec5}, we give proofs of Theorems \ref{thm2}--\ref{thm4}, and we also give an alternative proof of Theorem \ref{thm1}. In Section \ref{sec6}, we give proofs of Theorems \ref{thm5}--\ref{thm7} and Propositions \ref{prop1} and \ref{prop2}. In Section \ref{sec7}, we prove Theorem \ref{thm12}. In Section \ref{sec8}, we give an example. 

\section{Two-dimensional braids and their chart presentations}\label{sec2}
In this section, we review 2-dimensional braids over a surface-knot \cite{N4}, which is an extended notion of 2-dimensional braids or surface braids over a 2-disk \cite{Kamada92, Kamada02, Rudolph}. A 2-dimensional braid over a surface-knot $F$ is presented by a finite graph called a chart on a surface diagram of $F$ \cite{N4} (see also \cite{Kamada92, Kamada02}). For two 2-dimensional braids of the same degree, they are equivalent if their surface diagrams with charts are related by a finite sequence of ambient isotopies of $\mathbb{R}^3$, and local moves called C-moves \cite{Kamada92, Kamada02} and Roseman moves \cite{N4} (see also \cite{Roseman}). 
 
\subsection{Two-dimensional braids over a surface-knot}
 
Let $B^2$ be a 2-disk, and let $N$ be a positive integer. 
For a surface-knot $F$, let $N(F)=B^2 \times F$ be a tubular neighborhood of $F$ in $\mathbb{R}^4$. 

\begin{definition}
A closed surface $S$ embedded in $N(F)$ is called a {\it 2-dimensional braid over $F$ of degree $N$} if it satisfies the following. 

\begin{enumerate}[(1)]
\item
The restriction $p|_{S} \,:\, S \rightarrow F$ is a branched covering map of degree $N$, where $p\,:\, N(F) \to F$ is the natural projection with respect to a framing of $N(F)$. 

\item The number of points consisting $S \cap p^{-1}(x)$ is $N$ or $N-1$ for any point $x \in F$.
\end{enumerate}
Take a base point $x_0$ of $F$. 
Two 2-dimensional braids over $F$ of degree $N$ are {\it equivalent} if there is a fiber-preserving ambient isotopy of $N(F)=B^2 \times F$ rel $p^{-1}(x_0)$ which carries one to the other. 

We define the {\it standard} 2-dimensional braid over $F$ to be the 2-dimensional braid  presented by an empty chart on a surface diagram of $F$, defined in \cite{N4}, and we define the {\it standard framing} of $N(F)$ to be the framing determined from the standard 2-dimensional braid. 

\end{definition}
\subsection{Chart presentation of 2-dimensional braids}\label{sec2-2}

Let $S$ be a 2-dimensional braid over a surface-knot $F$. A {\it surface diagram} of a surface-knot is the image of $F$ in $\mathbb{R}^3$ by a generic projection, equipped with the over/under information on sheets along each double point curve. 

We explain a chart on a 2-disk $B$ in a surface diagram $D$ which does not intersect with singularities of $F$. 
We denote the 2-dimensional braid $S\cap p^{-1}(B)$ by $S$. We identify $N(B)$ by $I \times I \times B$. 
Consider the singular set $\mathrm{Sing}(p_1(S))$ of the image of $S$ by the projection $p_1$ to $I \times B$. Perturbing $S$ if necessary, we can assume that $\mathrm{Sing}(p_1(S))$ consists of double point curves, triple points, and branch points. Moreover we can assume that the singular set of the image of $\mathrm{Sing}(p_1(S))$ by the projection to $B$ consists of a finite number of double points such that the preimages belong to double point curves of $\mathrm{Sing}(p_1(S))$. Thus 
the image of $\mathrm{Sing}(p_1(S))$ by the projection to $B$ forms a finite graph $\Gamma$ on $B$ such that the degree of a vertex of $\Gamma$ is either $1$, $4$ or $6$, where we ignore the points in $\partial B$. An edge of $\Gamma$ corresponds 
to a double point curve, and a vertex of degree $1$ (respectively, $6$) 
corresponds to a branch point (respectively, a triple point). 

For such a graph $\Gamma$ obtained from a 2-dimensional braid $S$, we assign orientations and labels to all edges of $\Gamma$ as follows. Let us consider a path $\rho$ in $B$ such that $\rho \cap \Gamma$ is a point $x$ of an edge $E$ of $\Gamma$. Then $S \cap p^{-1} (\rho)$ is a classical $N$-braid with one crossing in $p^{-1}(\rho)$ such that $x$ corresponds to the crossing of the $N$-braid, where $N$ is the degree of $S$. Let $\sigma_{i}^{\epsilon}$ ($i \in \{1,2,\ldots, N-1\}$, 
$\epsilon \in \{+1, -1\}$) be the presentation of $S \cap p^{-1}(\rho)$. Then assign $E$ the label $i$, and the orientation such that 
the normal vector of $\rho$ corresponds (respectively, does not correspond) to the orientation of $E$ if $\epsilon=+1$ (respectively, $-1$), where the normal vector of $\rho$ is a vector $\vec{n}$ such that $(\vec{v}(\rho), \vec{n})$ corresponds to the orientation of $B$ for a tangent vector $\vec{v}(\rho)$ of $\rho$ at $x$. This is the {\it chart of $S$}. 
\\
 
 In general, we define a chart on a surface diagram as follows \cite{N4} (see also \cite{Kamada02}). 
 
\begin{definition} 
Let $N$ be a positive integer. 
A finite graph $\Gamma$ on a surface diagram $D$ is called a {\it chart} of degree $N$ if 
it satisfies 
the following conditions:

\begin{enumerate}[(i)]
\item
The intersection of $\Gamma$ and the singularity set of $D$ consists of a finite number of transverse intersection points of edges of $\Gamma$ and double point curves of $D$, which form vertices of degree $2$.

 \item Every vertex has degree $1$, $2$, $4$, or $6$.
 
 \item Every edge of $\Gamma$ is oriented and labeled by an element of 
       $\{1,2, \ldots, N-1\}$ such that 
       
       \begin{enumerate}
       \item  The adjacent edges around each  of degree $1$, $4$, or $6$  
are oriented and labeled as shown in 
Figure \ref{fig2}, 
where we depict a vertex of degree 1 by a black vertex, and a vertex of degree 
6 by a white vertex, and we call a vertex of degree $4$ a crossing.
\item
The adjacent edges of each vertex of degree 2 are as shown in Figure \ref{fig3}. 
\end{enumerate}
 \end{enumerate}
\end{definition}

\begin{figure}[ht] 
\includegraphics*{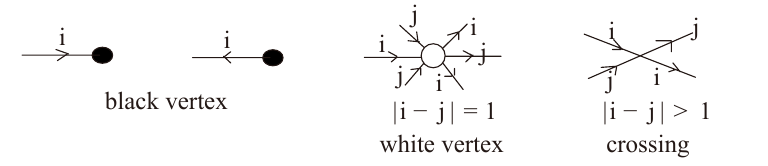}
\caption{Vertices in a chart (a), where $i \in \{1,\ldots,N-1\}$.}
\label{fig2}
\end{figure}
 
\begin{figure}[ht]
\centering\includegraphics*{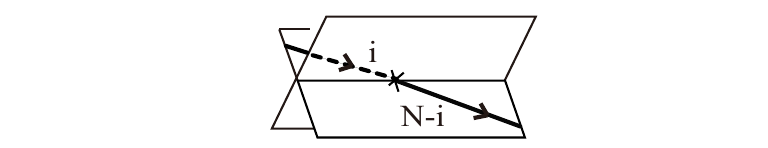}
\caption{A vertex of degree $2$ in a chart (b), where $i \in \{1,\ldots,N-1\}$. For simplicity, we omit the over/under information of each sheet.}
\label{fig3}
\end{figure}

A black vertex (respectively, a white vertex) of a chart corresponds to a branch point (respectively, a triple point) of the 2-dimensional braid presented by the chart.  We call an edge of a chart a {\it chart edge} or simply an {\it edge}. We regard chart edges connected by a vertex of degree 2 as one edge which contains a vertex of degree 2, and we will often omit to mention vertices of degree 2. A chart edge connected with no vertices except crossings (and vertices of degree 2) is called a {\it chart loop} or simply a {\it loop}. A chart is said to be {\it empty} if it is an empty graph. A 2-dimensional braid over a surface-knot $F$ is presented by a chart $\Gamma$ on a surface diagram of $F$ \cite{N4}. We present such a 2-dimensional braid by $(F, \Gamma)$. 
 
\subsection{C-moves}
  
{\it C-moves} are local moves of a chart, consisting of three types: CI-moves, CII-moves, and CIII-moves. 
Let $\Gamma$ and $\Gamma^{\prime}$ be two charts of the same degree on a surface diagram $D$. We say $\Gamma$ and $\Gamma'$ are related by a {\it CI-move}, {\it CII-move} or {\it CIII-move} if there exists a 2-disk $B$ in $D$ such that $B$ does not intersect with the singularities of $D$, and the loop $\partial B$ is in general position with respect to $\Gamma$ and $\Gamma^{\prime}$ and $\Gamma \cap (D-B)=\Gamma^{\prime} \cap (D-B)$, and the following conditions hold true.  
 
(CI) There are no black vertices in $\Gamma \cap B$ nor $\Gamma^{\prime} \cap B$. A CI-move as in Figure \ref{fig5} is called a CI-M1-move, CI-M2-move, CI-M3-move and CI-R2-move respectively; see \cite{CKS} for the complete set of CI-moves. 

(CII) $\Gamma \cap B$ and $\Gamma' \cap B$ are as in Figure \ref{fig5}, where $|i-j|>1$. 

A chart edge connected with a white vertex is called a {\it middle edge} if it is the middle of adjacent three edges around the white vertex with coherent orientations, and it is called a {\it non-middle edge} if it is not a middle edge. Around a white vertex, there are two middle edges and four non-middle edges. 

(CIII) $\Gamma \cap B$ and $\Gamma' \cap B$ are as in Figure \ref{fig5}, where $|i-j|=1$, and the black vertex is connected to a non-middle edge of a white vertex. 
\\

\begin{figure}[ht]
\includegraphics*{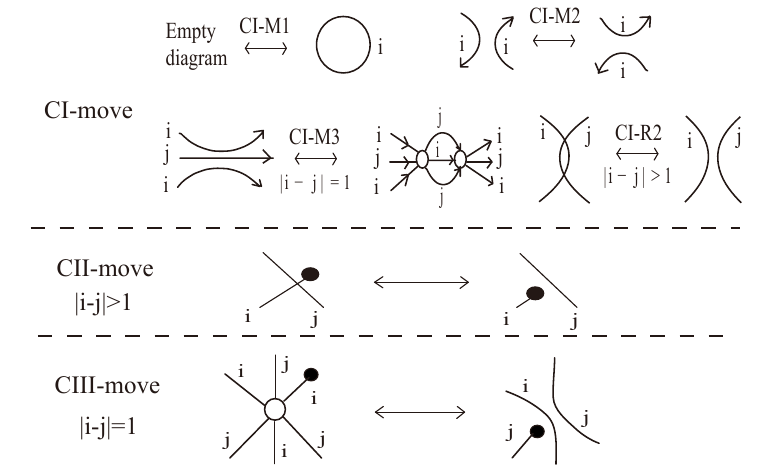}
\caption{C-moves. For simplicity, we omit orientations of some of the edges. Among 7 types of CI-moves, we present 4 types.} 
\label{fig5}
\end{figure}

For charts $\Gamma$ and $\Gamma'$ of the same degree on a surface diagram of a surface-knot $F$, 
their presenting 2-dimensional braids are equivalent if the charts are related by a finite sequence of C-moves \cite{Kamada92, Kamada02}.  

\subsection{Roseman moves}
 
{\it Roseman moves for surface diagrams with charts of the same degree} are defined by the original Roseman moves (see \cite{Roseman}) and local moves as illustrated in Figure \ref{fig4}, where we regard the diagrams for the original Roseman moves as equipped with empty charts. 
 
\begin{figure}[ht]
 \includegraphics*{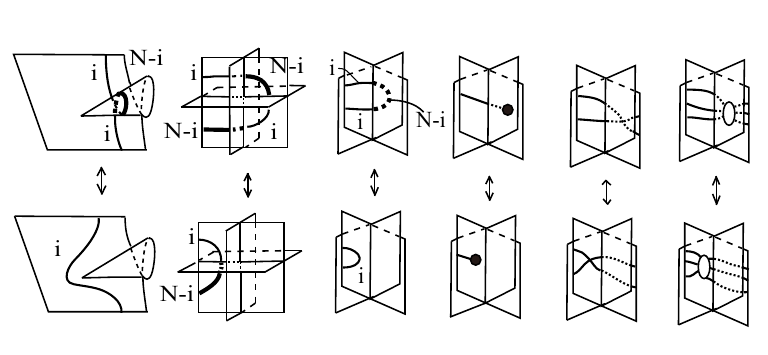}
\caption{Roseman moves for surface diagrams with charts of degree $N$, where $i \in \{1,\ldots,N-1\}$. For simplicity, we omit the over/under information of each sheet, and orientations and labels of chart edges. }
\label{fig4}
\end{figure}
 
 For two surface diagrams with charts, their presenting 2-dimensional braids are equivalent if they are related by a finite sequence of ambient isotopies of $\mathbb{R}^3$ and Roseman moves for surface diagrams with charts of the same degree \cite{N4}.  
 
\section{Two-dimensional braids with repeated pattern}\label{sec3}

We give a precise definition of a 2-dimensional braid with repeated pattern.

\begin{definition}
A 2-dimensional braid over a surface-knot $F$ {\it with repeated pattern with pattern braid $b$} is a 2-dimensional braid such that any closed path in $F$ presents a power of $b$ with respect to the standard framing of $N(F)$. 
\end{definition}

We present by an oriented edge with the label $(1)$ several parallel chart edges presenting $b$. Then a 2-dimensional braid with repeated pattern is presented by several oriented loops with the label $(1)$, and by definition, it is determined from the integers which present the numbers of the loops intersecting the oriented closed curves presenting the generators of $H_1(F; \mathbb{Z})$. 

For simplicity, we present by an oriented edge $E$ with the label $(m)$ $|m|$ copies of parallel oriented edges with the label $(1)$ each of which is equipped with the orientation coherent (respectively, incoherent) with that of $E$, for a non-negative (respectively, non-positive) integer $m$.  A CI-M2-move is presented as in Figure \ref{fig6}(b). We present by a crossing of edges the edges obtained by a  smoothing of the crossing with respect to the orientation (see Figure \ref{fig6}(c)). We call the resulting graph a {\it simplified chart}, or simply a {\it chart} for repeated pattern. From now on, when we treat a chart loop along the core loop or the cocore of a 1-handle, we assume that it is equipped with the orientation coherent with that of the core loop or the cocore. 

\begin{figure}[ht]
\centering
\includegraphics*[height=7cm]{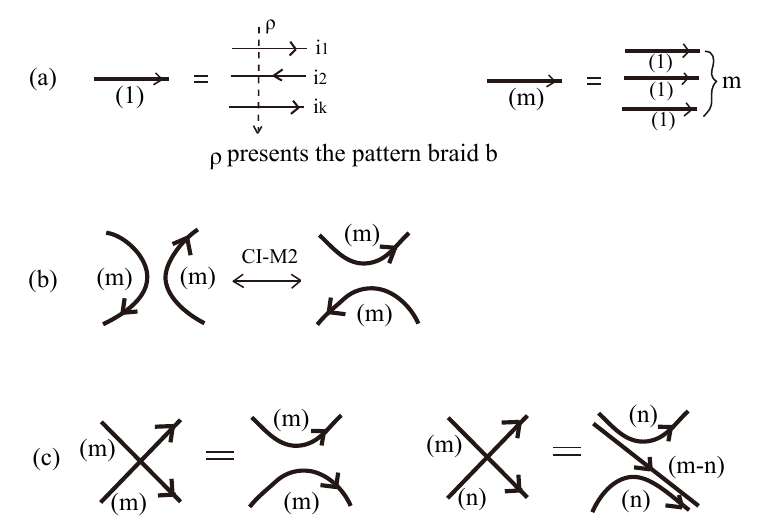}
\caption{Notation of simplified charts for repeated pattern: (a) Notation, (b) a CI-M2-move, (c) Crossings of chart edges.}
\label{fig6}
 \end{figure}

In the first part of this paper, we consider $F$ as a surface-knot which is in the form of the result of 1-handle additions for a surface-knot $F_0$. 
We take the cocores and core loops as the representatives of generators of $H_1(F; \mathbb{Z})$. 
For integers $m$ and $n$, we denote by $h(m,n)$ a 1-handle $h$ with a chart such that the cocore and core loop 
presents the pattern braid to the power $m$ and $n$,  respectively. By CI-moves and Roseman moves, we can assume that $h(m,n)$ is presented by a 1-handle $h$ with a chart loop with the label $(m)$ along the core loop and a chart loop with the label $(n)$ along the cocore. By definition, a 2-dimensional braid over $F$ with repeated pattern is presented by a 2-dimensional braid over $F_0$ with repeated pattern and 1-handles with chart loops. Further, we can assume that all the 1-handles are attached to a fixed small 2-disk in $F$ and there are no chart edges on the 2-disk except those belonging to the 1-handles. 

Now we check that the presentation $h(m,n)$ is well-defined, by using CI-moves, admitting that $h(m,n)$ is presented by a loop with the label $(m)$ along the core loop and a loop with the label $(n)$ along the cocore. By definition of the core loop, it suffices to show Claim \ref{claim1} for the well-definedness.

For a 1-handle $h=B^2 \times [0,1]$, we call $B^2 \times \{0\}$ (respectively, $B^2 \times \{1\}$) the {\it initial end} (respectively, the {\it terminal end}) of $h$. 
For $h$ attached to a 2-disk $B$ and a path $\rho$ in $F$ starting from a point of $B$, we say we {\it move $h$ along $\rho$} when we transform the surface-knot by the ambient isotopy which slides $B$ together with the ends of $h$ along $\rho$.

\begin{claim}\label{claim1}
For a 2-dimensional braid with repeated pattern, a 1-handle with chart loops $h(m,n)$ is invariant under moves along any paths. 
\end{claim}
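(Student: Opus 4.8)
The plan is to decompose an arbitrary move into elementary moves and to verify the invariance of the presentation $h(m,n)$ for each. First I would subdivide the path $\rho$ so that each elementary subpath either lies in a part of $F$ meeting no chart edge other than those of $h$ itself, or crosses a single band (a chart edge with label $(1)$) of the ambient repeated pattern transversally. Sliding $h$ along a chart-free subpath is an ambient isotopy supported away from the remaining chart, so it carries the chart loop with label $(m)$ along the core loop and the chart loop with label $(n)$ along the cocore to parallel copies of themselves. The two labels are then visibly unchanged, and so is the double coset $h\in P\backslash G(F_0)/P$, since this is precisely the equivalence class of the oriented core under ambient isotopy by \cite{Boyle1}. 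It therefore suffices to treat the elementary move in which $h$ is slid across a single band $E$.

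For the band-crossing move I would work entirely at the level of the simplified chart. As the relevant end of $h$ sweeps across $E$, the band $E$ threads through a regular neighborhood of the handle, so that $E$ meets the chart loop along the core loop and the chart loop along the cocore. I would then eliminate every crossing so produced by the smoothing convention of Figure \ref{fig6}(c) together with CI-moves, and identify the result as, again, a chart loop with label $(m)$ along the new core loop and a chart loop with label $(n)$ along the new cocore.

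The heart of the matter is to check that threading across $E$ shifts neither label. Conceptually, $m$ and $n$ are the signed numbers of strands of the total chart met while traversing the cocore and the core loop, respectively; these are homological intersection numbers in $F$, hence are preserved by the ambient isotopy realizing the slide. Graphically, I would verify this strand by strand: as the tube mouth passes over $E$, the band enters and leaves the mouth, meeting the chart loop there in two points of opposite orientation, which are removed by a CI-R2-move together with a Roseman move for the sheet change; the remaining crossings of $E$ with the loop running alongside it are absorbed by CI-M2-moves as in Figure \ref{fig6}(b). Since $E$ carries label $(1)$ and every loop carries a power of the single pattern braid, these resolutions introduce no black or white vertices, so the labels $(m)$ and $(n)$ are exactly restored.

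I expect the main obstacle to be the explicit bookkeeping of this band-crossing move: one must position the threading of $E$ through the handle precisely enough to see that only cancelling pairs of crossings of $E$ with the handle's chart loops occur, so that the CI-R2- and CI-M2-moves suffice to clear them without altering $m$ or $n$. Once the elementary move is established, invariance of $h(m,n)$ along an arbitrary path follows by concatenating the elementary moves.
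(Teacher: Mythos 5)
Your reduction of an arbitrary slide to elementary moves, with the chart-free case handled by ambient isotopy and the crossing of a single labelled edge $E$ as the only nontrivial case, is exactly the paper's strategy. The gap is in your treatment of the elementary move itself. Transverse crossings of $E$ with the handle's chart loops are a red herring: in simplified charts they are smoothed by convention (Figure \ref{fig6}(c)), and in any case crossings of chart edges never change the labels $(m)$ and $(n)$. The essential phenomenon, which your account misses, is that $E$ cannot pass through the attachment disks of the handle: after the slide, $E$ is wrapped around the 2-disk $B$ carrying \emph{both} ends, and a CI-M2-move pinches this wrapping off into a closed loop with label $(1)$ encircling $B$. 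A further CI-M2-move splits it into one loop around each end; the loop around the initial end is already a cocore-parallel loop of label $(+1)$, while the loop around the terminal end, pushed along the entire tube, becomes a cocore-parallel loop of label $(-1)$, so the cocore reads $n+1-1=n$ and the core-loop label $(m)$ is untouched (Figures \ref{fig7} and \ref{fig8}). Your proposed mechanism --- cancelling pairs of intersection points of $E$ with the loop at a single tube mouth via CI-R2-moves and a Roseman move --- localizes the cancellation at one end, and that cannot be correct: moving a \emph{single} end of a 1-handle across a chart edge genuinely changes the cocore label by $\pm 1$; indeed this is precisely how the 1-handle move (\ref{eq9}) is established in Lemma \ref{lem4-6}. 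Claim \ref{claim1} holds only because the move slides $B$ with both ends together, so that the two ends contribute cocore loops of opposite sign which cancel against each other; any bookkeeping that resolves everything at one mouth proves too much and would contradict Lemma \ref{lem4-6}.

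Your homological argument (the labels are intersection numbers with the cocore and core loop, hence isotopy invariants) correctly predicts the answer, but it does not replace the chart-level proof. It tells you what the labels must be \emph{provided} the resulting chart can be returned by CI-moves to the standard form of one loop along the core loop and one along the cocore, and it tacitly invokes the fact that a repeated-pattern braid is determined up to equivalence by these integers --- which is essentially the well-definedness that this claim is introduced to verify graphically. So keep your first paragraph, but replace the CI-R2 bookkeeping by the loop-creation, splitting, and push-along-the-tube cancellation described above; the homological remark can then stand as a consistency check rather than as the proof.
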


For simplified charts, when we have a pair of parallel loops with the same label and opposite orientations, by a CI-M2-move we have a loop bounding a disk, and then, by a CI-M1-move we can eliminate the loop (see Figure \ref{fig7}); thus, for parallel loops, we can add up the labels as integers. 

\begin{figure}[ht]
\begin{center}
\includegraphics*[width=12cm]{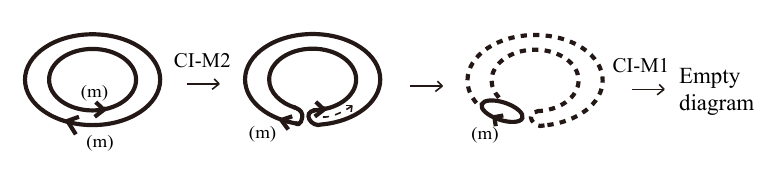}
\end{center}
\caption{Elimination of parallel loops with the same label and opposite orientations.}
\label{fig7}
\end{figure}

\begin{proof}[Proof of Claim \ref{claim1}]
We denote by $H$ a 1-handle with chart loops $h(m,n)$, which is presented by a loop with the label $(m)$ (respectively, $(n)$) along the core loop (respectively, the cocore). By sliding one end if necessary, we can assume both ends of $H$ are on a small 2-disk $B$. It suffices to consider the case when we move $H$ along a path $\rho$ which crosses a chart edge $E$ with the label $(1)$ such that the orientation of $E$ is coherent with the normal of $\rho$. When we move $H$ along $\rho$, by a CI-M2-move, a loop with the label $(1)$ appears along the boundary of the disk where $H$ is attached. Then, by a CI-M2-move, the loop splits into two loops each of which surrounds each end of $H$. Then move the loop surrounding the terminal end of $H$, along $H$, so that it becomes a loop along the cocore, with the label $(-1)$. 
Together with the other loops, we have a loop with the label $(m)$ (respectively, $(n+1-1)=(n)$) along the core loop (respectively, the cocore); see Figure \ref{fig8}. 
\end{proof}

\begin{figure}[ht]
\centering
\includegraphics*[width=13cm]{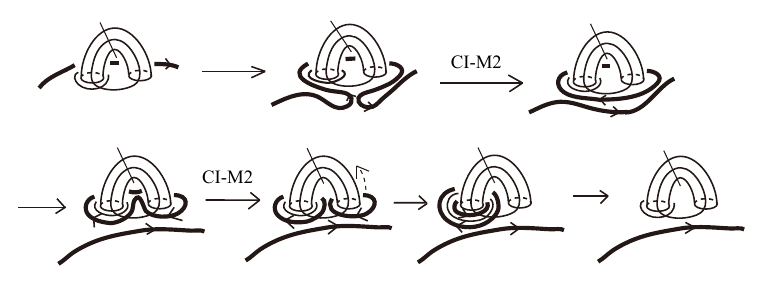}
\caption{Moving a 1-handle with chart loops across a chart edge. For simplicity, we omit the labels and some of the orientations of the chart loops.}
\label{fig8}
\end{figure}

\section{Handle moves}\label{sec4}
 
We use the notation given in Section \ref{sec3}. 
As moves for charts, we consider only CI-M2-moves. As moves for 1-handles, we consider two types of moves as follows (see \cite{Hirose}, see also \cite{Hirose2, Hirose3}). 
 
\noindent
1. {\it Crossing change of tubes.}

We consider a part $B^2 \times I'$ of a 1-handle $B^2 \times I$ for an interval $I' \subset I$ 
with $B^2 \times \partial I'$ fixed, which is called a {\it tube}. We say two tubes form a {\it crossing} if the projected images of their cores in $\mathbb{R}^3$ form a crossing. 
We determine the {\it sign} of a crossing to be positive or negative with respect to the orientations of the cores of the tubes. 

\begin{figure}[ht]
\centering
\includegraphics*{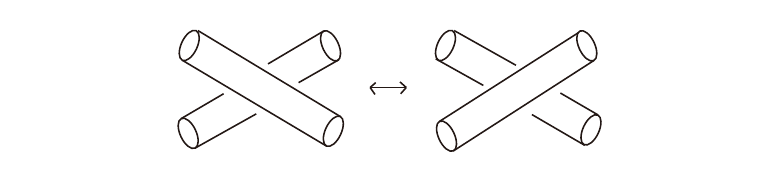}
\caption{Crossing change of tubes.}
\label{fig9}
\end{figure}

\begin{claim}
For two tubes in $\mathbb{R}^4$, a crossing change (see Figure \ref{fig9}) is an equivalent transformation. 
\end{claim}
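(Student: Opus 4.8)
The claim asserts that for two tubes in $\mathbb{R}^4$, changing the sign of a crossing between their projected images yields an equivalent surface-knot. The plan is to exploit the extra codimension afforded by working in $\mathbb{R}^4$ rather than $\mathbb{R}^3$: a crossing between the projected tubes in $\mathbb{R}^3$ is merely an artifact of the projection, because in $\mathbb{R}^4$ the two tubes need not actually intersect. First I would set up local coordinates near the crossing so that the two tubes pass through a small $4$-ball $B^4$, with each tube appearing in $\mathbb{R}^3$ as a band (the projected image of $B^2 \times I'$) and the crossing localized to where the two bands overlap in the projection. Outside this $B^4$ the two surface-knots agree, so it suffices to produce an ambient isotopy of $B^4$, fixed on $\partial B^4$, carrying the positive-crossing configuration to the negative-crossing one.

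The key step is to use the fourth coordinate to separate the two tubes. Since each tube is a $2$-dimensional piece embedded in $4$-space, I would perturb one tube in the direction transverse to the projection hyperplane $\mathbb{R}^3$, lifting it entirely above or below the other in the fourth coordinate within $B^4$. Once the two tubes are disjoint in $\mathbb{R}^4$ and one lies strictly on one side of the other in the fourth coordinate, I can slide it past: rotating the local picture by pushing the overcrossing strand through the fourth dimension and bringing it back down on the opposite side realizes the interchange of over/under data, which is exactly the crossing change. Because the two tubes never meet during this motion, the isotopy is ambient and can be taken to be the identity on $\partial B^4$, so it extends by the identity to all of $\mathbb{R}^4$ and preserves the rest of the surface-knot. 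One must also check that the chart data carried on the tubes (labels and orientations of the parallel chart loops running along them) is unaffected, since the isotopy moves the sheets rigidly and does not cross any other singular set; this follows because the braiding along each tube is determined by the parallel loops it carries, independent of the crossing sign in the $\mathbb{R}^3$-projection.

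The main obstacle I anticipate is bookkeeping rather than conceptual: I must verify that the separating isotopy is compatible with the framing hypothesis on the $1$-handles (the blackboard-framing condition from Remark \ref{rem4-1}) and that pushing one tube through the fourth coordinate does not inadvertently alter the framing or introduce a twist in the tube. I would address this by performing the vertical displacement through a framing-preserving isotopy, i.e. moving the tube together with a consistent normal framing, so that when it returns to the projection hyperplane the blackboard framing is restored. A secondary point to confirm is that the move is genuinely local and does not interfere with the ends $B^2 \times \partial I'$, which are held fixed by hypothesis; since the crossing is localized strictly in the interior of the tubes, the endpoints lie outside $B^4$ and the argument goes through. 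Altogether, the proof is essentially the standard observation that crossing changes are trivial one codimension up, adapted to the tube-and-chart setting, with care taken to preserve orientations, chart labels, and the prescribed framing.
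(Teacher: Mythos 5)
Your overall strategy---exploiting the extra codimension so that the crossing is only an artifact of the projection, and realizing the change by an ambient isotopy supported near the tubes and fixed elsewhere---is the same idea the paper uses. But the central step, as you describe it, is asserted rather than proved, and read literally it fails. You push the overcrossing tube ``through the fourth dimension and bring it back down on the opposite side'' while (apparently) keeping its projected image fixed, and you then claim that ``the two tubes never meet during this motion.'' That claim is exactly what has to be established, and it is false for the naive motion: the moving tube starts at fourth coordinate $\approx 1$ and must end at fourth coordinate $\approx -1$ near the crossing, so at some instant it sits at level $0$, the level of the other tube; if at that instant its projection still overlaps the other tube's projection (which it does if the projection never changes), the two tubes intersect in $\mathbb{R}^4$. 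This is not a pedantic point: one dimension down, the identical argument would ``prove'' that crossing changes of classical knot diagrams are isotopies, which is false. The missing ingredient is a horizontal displacement: first slide the projected image of the moving tube off the crossing region (this is harmless precisely because the two tubes lie at different levels, so their projections may sweep across one another without the tubes meeting), then lower it past level $0$, then slide the projection back. With that intermediate step your local, rel-$\partial B^4$ isotopy does exist, and your remarks about framing and chart data become relevant bookkeeping.

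The paper's proof is arranged so that this collision issue never arises at all: it places $h$ in $\mathbb{R}^3 \times \{0\}$ and $h'$ in $\mathbb{R}^3 \times \{1\}$, and performs the entire isotopy inside the slab $\mathbb{R}^3 \times [0.5,1.5]$, so the moving tube never approaches the level of $h$; disjointness is automatic because the support of the isotopy is disjoint from $h$. The crossing of opposite sign is then achieved by changing the projected picture (sliding $h'$ within its own level to a new position $h''$), not by dragging the tube down through the level of the other one. So the paper trades your delicate vertical maneuver for a trivially safe one, at the cost of working in a slab rather than a small ball. Your write-up needs either that slab trick or the explicit three-step motion above to close the gap.
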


\begin{proof}
Consider two tubes $h$ and $h'$ which form a crossing. 
We can assume that $h$ is in $\mathbb{R}^3 \times \{0\}$, and $h'$ is in $\mathbb{R}^3 \times \{1\}$. By an ambient isotopy of $\mathbb{R}^4$ rel $\mathbb{R}^4-\mathbb{R}^3 \times[0.5, 1.5]$, we can deform $h' \subset \mathbb{R}^3 \times \{1\}$ to the form $h''\subset \mathbb{R}^3 \times \{1\}$ such that $h$ and $h''$ form a crossing whose sign is opposite to the sign of the original crossing. Hence we have the required result. 
\end{proof}

\noindent
2. {\it Handle slide}. 

We consider a transformation between two 1-handles $h$ and $h'$ such that the terminal end of $h$ slides along the core of $h'$ once. The surface other than near the terminal end of $h$ is fixed. We call this transformation a {\it handle slide} along $h'$ (see Figure \ref{fig10}). 
\\

\begin{figure}[ht]
\centering
\includegraphics*{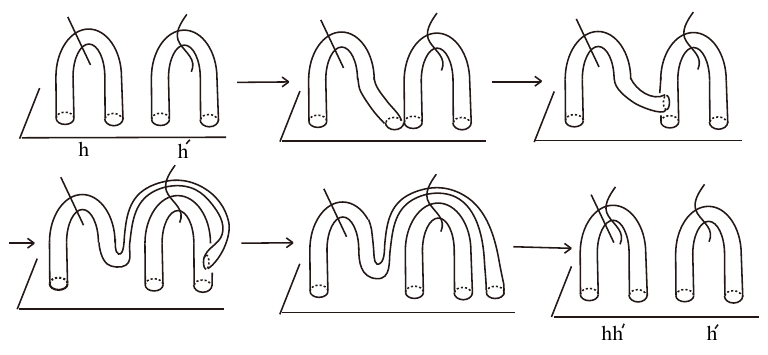}
\caption{Handle slide. In order to indicate that 1-handles may be non-trivial, we draw slashes in the middle of 1-handles.}
\label{fig10}
\end{figure}

Let $h$ and $h'$ be 1-handles presented by $h,h' \in P\backslash G(F_0)/P$, attached to a 2-disk $B$, such that there are no chart edges on $B$ except those belonging to the 1-handles, and let $m,n,m',n'$ be integers. We say 1-handles with chart loops are {\it equivalent} if their presenting 2-dimensional braids are equivalent, and use the notation \lq\lq $\sim$" for equivalence relation. 

\begin{lemma}\label{lem4-2}
We have 
\begin{equation}\label{eq1}
h(m,n) \sim h^{-1}(-m,-n).
\end{equation}
\end{lemma}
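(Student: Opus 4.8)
The plan is to read both sides of (\ref{eq1}) as two names for one and the same geometric object. Since a $1$-handle is identified with the double coset of its oriented core in $P\backslash G(F_0)/P$, the symbol $h^{-1}$ denotes the very same embedded $3$-ball $B^2\times I$ carrying the opposite orientation on its core $\{0\}\times I$; the handle, the chart loops on it, and the ambient surface are all left untouched. Only the two reference curves along which the pattern-braid powers are measured — the core loop and the cocore (see Figure \ref{fig1}) — change. So the lemma reduces to the bookkeeping assertion that reversing the core orientation simultaneously reverses both reference curves, thereby negating both recorded powers.

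First I would dispose of the core-loop power. The core loop is oriented by the core, so it reverses under $h\mapsto h^{-1}$; the power it presents is the monodromy of the repeated pattern along that loop, and reversing the loop inverts the monodromy, replacing $n$ by $-n$. This handles the second coordinate at once.

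The real work, and the step I expect to be the main obstacle, is the cocore power $m$. Reversing the $I$-factor interchanges the initial and terminal ends, so the cocore $\partial B^2\times\{0\}$ of $h$ is replaced by the meridian at the opposite end; at the same time, keeping the orientation of the handle $B^2\times I$ (equivalently, the induced orientation of $F_0+h$) fixed while reversing $I$ forces a reversal of the $B^2$-factor, hence of $\partial B^2$. I would make both effects explicit: the meridian chart loop is transported from one end to the other along the tube by CI-M2-moves exactly as in the proof of Claim \ref{claim1} (Figure \ref{fig8}), while the compensating reversal of $\partial B^2$ supplies the orientation bookkeeping. Tracking the signs through this combination yields that the cocore of $h^{-1}$ presents $-m$.

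Assembling the two computations, the surface-knot with chart named by $h(m,n)$ becomes, after the above transport of the meridian loop, the one named by $h^{-1}(-m,-n)$; since every move used (CI-M2-moves together with the handle transport of Claim \ref{claim1}) is an equivalence, (\ref{eq1}) follows. The delicate point is the sign of the cocore power: the naive guess that shifting a meridian to the other end leaves its label unchanged must be corrected by the forced reversal of $\partial B^2$, and it is precisely this correction that produces the minus sign.
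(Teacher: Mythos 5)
Your proof is correct and takes essentially the same route as the paper: the paper's proof of (\ref{eq1}) consists precisely of regarding the orientation-reversed core loop of $h$ as the core loop of $h^{-1}$, so that both sides of the relation are two names for the same 1-handle with chart loops, with both recorded powers negated. Your explicit justification that the cocore orientation must also reverse (via the fixed orientation of $F_0+h$ forcing a reversal of $\partial B^2$ when $I$ is reversed) is exactly the bookkeeping the paper delegates to Figure \ref{fig11}.
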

 
\begin{proof}
By regarding the orientation reversed path of the core loop of $h$ as the new core loop of $h$, we have the result (see Figure \ref{fig11}). 
\end{proof}

\begin{figure}[ht]
\centering
\includegraphics*[height=3.5cm]{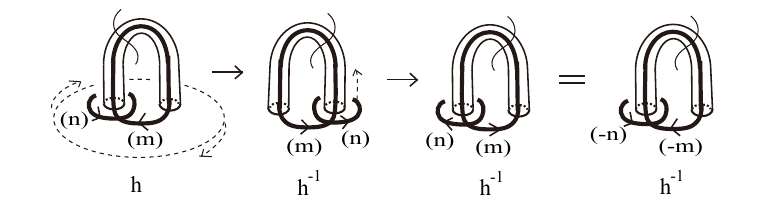}
\caption{$h(m,n) \sim h^{-1}(-m,-n)$.}
\label{fig11}
\end{figure}

\begin{lemma}\label{lem4-3}
We have
\begin{equation}\label{eq2}
h(m,n)\sim h(m,n \pm 2m). 
\end{equation}
\end{lemma}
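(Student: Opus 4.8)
The plan is to realize the change $n \mapsto n \pm 2m$ by a single \emph{self}--handle slide of $h$: I would apply the handle slide of Figure~\ref{fig10} with $h'=h$ (equivalently, spin the terminal end of $h$ by a full turn about its own core), so that the terminal end is carried once around the core loop of $h$ and back to the attaching $2$--disk $B$. Since a handle slide is an equivalent transformation, this produces a $1$--handle again equivalent to $h$ as an element of $P\backslash G(F_0)/P$ and with the same core loop; the whole content is to track what happens to the chart loops. First I would normalise, as in the hypotheses, so that the only chart edges near $B$ are the $(m)$--labelled loop along the core loop and the $(n)$--labelled loop along the cocore, each with coherent orientation.

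During the slide the cocore of $h$ is dragged along the core loop, across the chart loop of label $(m)$ running parallel to it. Exactly as in the proof of Claim~\ref{claim1} (compare Figures~\ref{fig8} and~\ref{fig7}), each transverse passage of the cocore through this band is resolved by a CI--M2--move, depositing a loop of label $(m)$ which can then be pushed along the tube to lie along the cocore. Because the end returns to its starting position, the sweep of the blackboard--framed cocore meets the $(m)$--band in two points, so two such loops of label $(m)$ are created; combining them with the original $(n)$--loop by CI--M2-- and CI--M1--moves (Figure~\ref{fig7}) yields a cocore loop of label $(n+2m)$ while leaving the core loop labelled $(m)$. Reversing the direction of the slide gives $(n-2m)$, which accounts for the sign in~(\ref{eq2}). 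Any incidental self--crossings of the tube created while sliding the end around are removed by crossing change of tubes (Figure~\ref{fig9}), so they do not affect the underlying $1$--handle.

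The main obstacle is the geometric count that produces the factor $2$ together with its sign: I must verify that one full self--slide makes the cocore cross the $(m)$--labelled band \emph{exactly twice}, with coherent orientations, rather than once or an odd number of times. This is precisely where the orientability of $h$ and the blackboard framing enter, since a framing compatible with an orientable handle forces a full ($2\pi$) turn and hence an even geometric intersection; a careful picture tracking the two passages and their signs, analogous to Figure~\ref{fig8}, is therefore the crux. Once this count is confirmed, the remaining steps are the same routine CI--M2-- and CI--M1--manipulations already used in Claim~\ref{claim1} and Lemma~\ref{lem4-2}, and the desired equivalence $h(m,n) \sim h(m,n\pm 2m)$ follows.
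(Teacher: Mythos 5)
Your approach is genuinely different from the paper's, and as written it has a gap that I do not think can be repaired without changing its central mechanism. The paper proves Lemma~\ref{lem4-3} by twisting $H=h(m,n)$ once in the middle so that its projection acquires a negative self-crossing of tubes, and then applying the crossing change of tubes (Figure~\ref{fig12}); trading a negative kink for a positive one shifts the writhe, hence the blackboard framing, by two full twists, and that is exactly where $n\mapsto n+2m$ comes from. In your argument the crossing change appears only as an ``incidental'' cleanup step, but it is in fact the essential four-dimensional ingredient: any deformation that returns $h$ to its standard position without a crossing change would induce on the surface a single Dehn twist along the cocore, and would therefore prove $h(m,n)\sim h(m,n\pm m)$ --- which is false, since $1(k,k)$ and $1(k,0)$ are distinct \cite{Livingston} (Remark~\ref{rem4-1}).

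Concretely, there are three problems. First, the ``handle slide with $h'=h$'' is not one of the paper's moves: the handle slide is defined only for two distinct 1-handles, because it requires the surface away from the sliding end of $h$ --- in particular all of $h'$ --- to stay fixed, which is impossible when $h'=h$; you never establish that your self-slide is an equivalence, and after the slide the tube in fact wraps over itself, so it is not even back in standard position until a crossing change is performed. Second, your parenthetical reformulation, ``spin the terminal end of $h$ by a full turn about its own core,'' is a different operation: a single full twist of the tube, which changes the reading by $\pm m$, not $\pm 2m$; if that were an equivalence it would contradict \cite{Livingston} as above, so your two descriptions of the move cannot both be correct, which signals that the move itself, not just its bookkeeping, is the issue. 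Third, the factor-2 count that you yourself flag as the crux is left unproven, and it cannot be obtained the way you suggest: the slide path runs parallel to the $(m)$-labelled band along the core loop, so their algebraic intersection number is zero, and no orientation or framing argument turns this into two coherent crossings. A corrected version of your idea would let the slide create one self-crossing of the tube and then extract the factor $2$ from the writhe shift under the crossing change that removes it --- at which point the argument is structurally the paper's proof, with the kink produced by a slide instead of a twist.
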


\begin{proof}
Twist $H=h(m,n)$ once in the middle to make a negative crossing of tubes. Then apply a crossing change of tubes. Then $H$ becomes  $h(m,n+2m)$ (see Figure \ref{fig12}). 
 
The second relation is obtained similarly when we twist $H$ in the opposite way to make a positive crossing. 
\end{proof}

\begin{figure}[ht]
\centering
\includegraphics*[width=12cm]{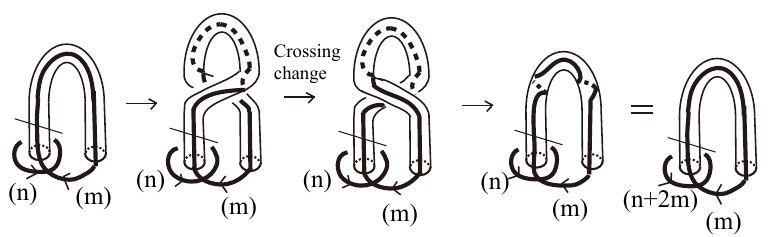}
\caption{$h(m,n)\sim h(m,n + 2m)$. }
\label{fig12}
\end{figure}
 
\begin{remark}\label{rem4-1}
Recall that we equipped a 1-handle with the framing such that the projected image in $\mathbb{R}^3$ has the blackboard framing. Since $1(k,k)$ and $1(k,0)$ ($k \neq 0$) are distinct \cite{Livingston} (see also \cite{Boyle2}), we can see that our framing of a 1-handle is well-defined up to this relation (\ref{eq2}). 
\end{remark}
 
\begin{lemma}\label{lem4-4}
For trivial 1-handles, 
\begin{equation}\label{eq3}
1(m,n) \sim 1(-n,m) \sim 1(n, -m).
\end{equation}
\end{lemma}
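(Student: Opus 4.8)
The plan is to prove the single relation $1(m,n)\sim 1(-n,m)$ and to obtain the second one for free. Since a trivial $1$-handle satisfies $1=1^{-1}$ in $P\backslash G(F_0)/P$, Lemma \ref{lem4-2} specializes to $1(a,b)\sim 1(-a,-b)$ for all integers $a,b$. Applying this with $(a,b)=(-n,m)$ gives $1(-n,m)\sim 1(n,-m)$, so once the first equivalence is established the chain $1(m,n)\sim 1(-n,m)\sim 1(n,-m)$ completes the statement. Thus everything reduces to proving $1(m,n)\sim 1(-n,m)$.

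The geometric content of this remaining relation is that, for a trivial $1$-handle, the core loop and the cocore can be interchanged by an ambient isotopy of $\mathbb{R}^4$ (which, being an ambient isotopy, is automatically an equivalence). Because $h$ is trivial, the result of the $1$-handle addition is $F_0$ together with a standardly embedded torus $T$, and this torus may be taken to lie inside a ball that is disjoint from $F_0$, from the part of the chart outside the handle, and from all other handles; here the triviality of $h$ is essential. On such a standard torus there is a $90^\circ$ rotation, realized by a rotation of $\mathbb{R}^4$ supported in a neighborhood of that ball, which carries the core loop $C$ to the cocore $K$ with its orientation preserved, and carries the cocore $K$ to the core loop $C$ with its orientation reversed. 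I would present this isotopy as a \emph{turning} of the trivial tube, in the spirit of the handle moves of Section \ref{sec4}.

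It then remains to read off the effect on the chart loops. Under this isotopy the loop with label $(m)$ running along $C$ is carried to a loop with label $(m)$ running along the image of $C$, namely the cocore; and the loop with label $(n)$ running along $K$ is carried to a loop running along the image of $K$, which is $C$ traversed in the opposite direction, so that its label becomes $(-n)$ and it runs along the core loop. Reading the resulting configuration in the standard way, first the label along the core loop and then the label along the cocore, yields $1(-n,m)$, as required; the bookkeeping of the parallel loops produced along the way is handled by the moves of Figure \ref{fig7}.

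The step I expect to be the main obstacle is the careful construction and orientation analysis of the turning isotopy. One must check that the rotation can indeed be carried out within a ball disjoint from the rest of the surface, which is exactly where the hypothesis that the handle is \emph{trivial} enters, since a knotted core cannot be exchanged with the meridional cocore by such a move, and that the orientations transform so as to produce the signs in $(-n,m)$ rather than some other sign pattern. Once the orientation conventions at the common base point of the core loop and the cocore (Figure \ref{fig1}) are fixed, this becomes a finite check.
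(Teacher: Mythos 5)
Your proposal is correct and follows essentially the same route as the paper: the paper likewise obtains the second equivalence from Lemma \ref{lem4-2}, and proves $1(m,n)\sim 1(-n,m)$ by an ambient isotopy that exchanges the core loop and cocore of the trivial handle, realized there by pushing $H$ from one side of the attaching 2-disk $B$ to the other (Figure \ref{fig13}) --- precisely your ``turning of the trivial tube.'' The only difference is presentational: the paper phrases the turning as a push through $B$ rather than as a rotation supported near a ball, and the orientation bookkeeping then gives $1(-n,m)$ exactly as in your sign analysis.
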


\begin{proof}
We assume that the trivial 1-handle with chart loops $H=1(m,n)$ is in $\mathbb{R}^3$. 
Around $H$ attached to a 2-disk $B$, we can assume that $\mathbb{R}^3$ is divided into two regions $R_1$ and $R_2=\mathbb{R}^3-R_1$ such that $H \subset R_1 \cup B$ (see the first figure of Figure \ref{fig13}). We say $H$ is {\it over} (respectively, {\it under}) $B$ if $H$ is in $R_1 \cup B$ (respectively, $R_2 \cup B$). 
Now, push $H$ under $B$. 
This move is equivalent transformation in $\mathbb{R}^4$. 
Then the core loop and cocore of $H$ are exchanged, and $H$ is deformed to $1(-m,n)$ (see Figure \ref{fig13}). The other relation is obtained from $(\ref{eq1})$. 
\end{proof}

\begin{figure}[ht]
\centering
\includegraphics*[width=12cm]{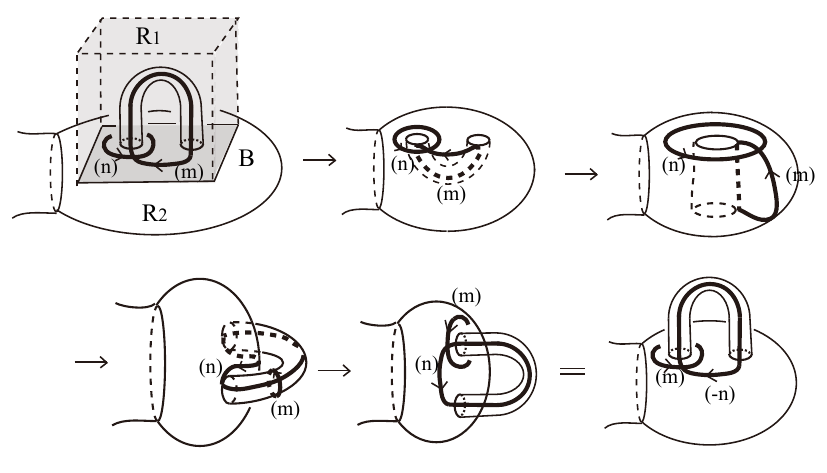}
\caption{$1(m,n) \sim 1(-n,m)$.}
\label{fig13}
\end{figure}
 
\begin{lemma}\label{lem4-5}
We have
\begin{eqnarray}
h(m, n)+ h'(m', n') &\sim& hh'(m, n+n')+ h'(m'-m, n') \label{eq4}\\ 
&\sim & h'^{-1}h(m, n-n') + h'(m'+m, n').\nonumber
\end{eqnarray}

In particular,

\begin{eqnarray}
h(0,n)+ h'(0,n') &\sim& hh'(0, n+n')+ h'(0, n') \label{eq6}\\
&\sim& h'^{-1}h(0, n-n') + h'(0,n'), \nonumber
\end{eqnarray}
\begin{eqnarray}\label{eq7}
h(m,n)+ 1(m',0) &\sim& h(m,n)+ 1(m'\pm m, 0). 
\end{eqnarray}
\end{lemma}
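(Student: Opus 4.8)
The plan is to establish the two relations in (\ref{eq4}) in full generality and then obtain (\ref{eq6}) and (\ref{eq7}) as specializations. Indeed, (\ref{eq6}) is the case $m=m'=0$, in which the cocore labels vanish and only the core labels are transported, and (\ref{eq7}) is the case $h'=1$, $n'=0$, for which $hh'=h$ and the core label of the first handle is unchanged, so that only the cocore label $m'$ of the trivial handle is affected. Thus everything reduces to analyzing a single handle slide, and the engine of the proof is the handle slide (an ambient isotopy, hence an equivalence) together with CI-M2-moves used to reorganize parallel chart loops, exactly in the spirit of the proof of Claim \ref{claim1}.

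For the first relation I would perform the handle slide taking the terminal end of $h$ once along the core of $h'$. After the slide the core loop of the first handle runs successively through $h$ and through $h'$, so its underlying element of $P\backslash G(F_0)/P$ becomes $hh'$, and the loop along this new core meets the bands $n$ times along the $h$-part and $n'$ times along the $h'$-part; after absorbing the two parallel contributions by a CI-M2-move (and the parallel-loop cancellation of Figure \ref{fig7}) the core label is $n+n'$. The cocore of the first handle is the meridian near the (untouched) initial end of $h$, so its label remains $m$. This yields the first summand $hh'(m,n+n')$. The reaction on $h'$ comes from dragging the cocore meridian of $h$, carrying the label $(m)$, along $h'$: it lands as a meridian of $h'$ with reversed orientation, so by a CI-M2-move it changes the cocore label of $h'$ from $m'$ to $m'-m$, while the core loop of $h'$ is traversed by nothing new and keeps the label $n'$. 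This gives the second summand $h'(m'-m,n')$, proving the first line of (\ref{eq4}).

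For the second line I would either repeat the argument sliding the terminal end of $h$ along the orientation-reversed core $\overline{h'}$ of $h'$, which produces $h'^{-1}h(m,n-n')+h'(m'+m,n')$ with all signs reversed, or derive it from the first line: applying Lemma \ref{lem4-2} to $h'$ before and after the slide replaces $(h',m',n')$ by $(h'^{-1},-m',-n')$ and back, turning the first relation into $h'^{-1}h(m,n-n')+h'(m'+m,n')$. The specializations then give (\ref{eq6}) and (\ref{eq7}) at once, the sign $\pm$ in (\ref{eq7}) coming from the two slide directions.

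The main obstacle is the careful sign-and-orientation bookkeeping in the second summand, namely verifying that the cocore meridian of $h$ is deposited on $h'$ with exactly the orientation that yields $-m$ (rather than $+m$) after the CI-M2-move; this is the same kind of delicate tracking of the two split loops that appears in the proof of Claim \ref{claim1}, and it must be carried out consistently for both slide directions. A secondary point to pin down is the ordering of the group element, since $P\backslash G(F_0)/P$ is a set of double cosets rather than a group: here one uses that the elements produced by slides lie in the subgroup $\langle h_1,\ldots,h_g\rangle$ and that the order ($hh'$ for the forward slide, $h'^{-1}h$ for the backward slide) is dictated by the side on which the core detour is inserted.
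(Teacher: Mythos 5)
Your engine---a handle slide of the terminal end of $h$ along $h'$, with CI-M2-moves to reorganize the loops, followed by obtaining (\ref{eq6}) and (\ref{eq7}) as specializations---is the same as the paper's, but two steps go wrong. The more serious one is your derivation of the second line of (\ref{eq4}): both of your proposed routes produce the wrong group element. Sliding the terminal end of $h$ along the orientation-reversed core of $h'$ appends the detour on the right of the core of $h$, and likewise applying Lemma \ref{lem4-2} to $h'$ before and after the first line gives
\[
h(m,n)+h'^{-1}(-m',-n') \sim h h'^{-1}(m, n-n') + h'^{-1}(-m'-m,-n') \sim h h'^{-1}(m,n-n') + h'(m'+m,n'),
\]
so either way the first summand is $hh'^{-1}(m,n-n')$, not the stated $h'^{-1}h(m,n-n')$. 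Since $hh'^{-1}$ and $h'^{-1}h$ are merely conjugate and in general represent different elements of $P\backslash G(F_0)/P$ (and the distinction matters later, e.g.\ in Theorem \ref{thm3}, where commutators are quotiented out explicitly), this does not prove the relation as stated. The paper instead applies (\ref{eq1}) to the \emph{first} handle before and after the first line: reversing the core of $h$ exchanges its initial and terminal ends, which is exactly what inserts the $h'^{-1}$ detour on the left. Your closing remark that the order is ``dictated by the side on which the core detour is inserted'' is the right principle, but a terminal-end slide always inserts the detour on the same side; to get $h'^{-1}h$ one must slide the initial end (equivalently, conjugate by (\ref{eq1}) on $h$, not on $h'$).

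The second problem is the mechanism you give for the $(m'-m)$ term, which is internally inconsistent. In the convention $h'(m',n')$, the first coordinate is the power read by the cocore of $h'$, and it is carried by the chart loop running \emph{along the core loop} of $h'$; a loop deposited ``as a meridian of $h'$'' would change the second coordinate $n'$, not $m'$. What must actually be verified is that the dragged $(m)$-loop leaves a loop along the core loop of $h'$ with label $(-m)$, and you yourself flag this orientation bookkeeping as the main unresolved obstacle. The paper's proof is organized precisely so that this verification is automatic: one first splits the $(m')$-loop of $h'$ into parallel loops labeled $(m)$ and $(m'-m)$ (in the sense of Figure \ref{fig7}), merges the $(m)$-piece with the $(m)$-loop of $h$ by a CI-M2-move, and only then slides; after the slide the merged $(m)$-loop runs along the new core of $hh'$, the handle $h'$ manifestly retains $(m'-m)$, and the $(n')$-loop crossed during the slide is absorbed into the cocore loop of the first handle (Figure \ref{fig14}). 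Reorganizing your argument in this order would close the gap in the first line; the second line then needs the fix described above.
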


\begin{proof}
We denote by $H_1$ and $H_2$ the first and second 1-handles with chart loops, respectively. 
Present the loop with the label $(m')$ along the core loop of $H_2$ by two loops with the labels $(m)$ and $(m'-m)$. 
Move the loop with the label $(n')$ along the cocore of $H_2$ to the terminal end of $H_2$. 
Then apply a CI-M2-move to the loops with the same label $(m)$ on $H_1$ and $H_2$, and slide the terminal end of $H_1$ along $H_2$, and move the end along a path in $B$ to its original position. Then, applying a CI-M2-move, $H_1$ has a loop with the label $(m)$ along the core loop, and loops with the label $(n)$ and $(n')$ along the cocore and the boundary of the terminal end, respectively, and $H_2$ has a loop with the label $(m'-m)$ along the core loop, and a loop with the label $(n')$ along the cocore. Move along $H_2$ to the cocore, the loop with the label $(n')$ surrounding the terminal end of $H_2$; then, by remarking the orientations and the presentation of 1-handles in $P\backslash G(F_0)/P$, we have the first relation of (\ref{eq4}) (see Figure \ref{fig14}). 
The second relation (\ref{eq4}) is obtained by applying (\ref{eq1}) to $H_1$ before and after applying the first relation of (\ref{eq4}). The other relations follow from (\ref{eq4}) immediately. 
\end{proof}

\begin{figure}[ht]
\centering
\includegraphics*[width=13cm]{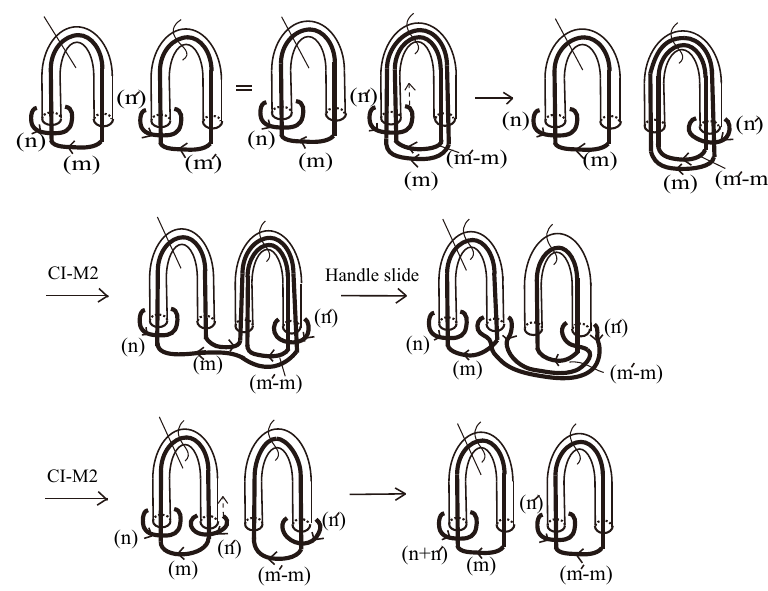}
\caption{$h(m, n)+ h'(m', n') \sim hh'(m, n+n')+ h'(m'-m, n')$.}
\label{fig14}
\end{figure}
 
\begin{lemma}\label{lem4-6}
We have 
\begin{equation}\label{eq9}
h(0,n)+ h'(m',n') \sim h(0,n \pm m') + h'(m',n').
\end{equation}
\end{lemma}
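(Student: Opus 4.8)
The plan is to establish (\ref{eq9}) by a direct geometric \lq\lq move along a path" argument, in the spirit of the proof of Claim \ref{claim1}, but performed on a single end of the first handle rather than on the whole handle. Write $H_1=h(0,n)$ and $H_2=h'(m',n')$ for the two $1$-handles with chart loops, both attached to the fixed small $2$-disk $B$. The observation that drives the proof is that the core loop of $H_2$ is itself a chart loop carrying the label $(m')$, and that it meets the cocore of $H_2$ transversely in exactly one point, namely their common base point. Crossing this core loop once should therefore deposit a loop of label $(m')$, and I will arrange for it to end up along the cocore of $H_1$.

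Concretely, first I would slide the terminal end of $H_1$ (keeping its initial end fixed on $B$) once along a loop that is parallel to the cocore of $H_2$, so that this end crosses the core loop of $H_2$ exactly once before returning to $B$. As in Figure \ref{fig8}, each of the $|m'|$ parallel strands constituting the core loop of $H_2$, when crossed, yields by a CI-M2-move a loop encircling the terminal end of $H_1$; together they form a single loop with the label $(\pm m')$ around that end, where the sign is determined by the direction of traversal and the orientation of the core loop. Pushing this loop along the tube of $H_1$ converts it into a loop with the label $(\pm m')$ running along the cocore of $H_1$. Since $H_1$ already carries the label $(0)$ along its core loop and $(n)$ along its cocore, the result is exactly $h(0,n\pm m')$, while $H_2$, on which the CI-M2-move is purely local, is restored to $h'(m',n')$.

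The decisive difference from Claim \ref{claim1} is that there the entire handle is moved, so loops of opposite sign appear around both ends and cancel (the net change was $n+1-1=n$); here only the terminal end is moved, so the contribution $(\pm m')$ is not cancelled. Two points then remain to be checked, and I expect the second to be the main obstacle. First, that $H_2$ and the labels $(m'),(n')$ on it are genuinely left unchanged, which follows from the locality of the CI-M2-moves. Second, and more delicately, that the underlying elements $h,h'\in P\backslash G(F_0)/P$ are unchanged; the only candidate for a change is the oriented core of $H_1$, whose terminal portion is dragged once around a loop parallel to the cocore of $H_2$. Because that cocore is a meridian of the handle $H_2$, hence represents an element of the peripheral subgroup $P$, appending it to the oriented core of $H_1$ leaves the double coset $PhP$ fixed. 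Verifying this last claim carefully — that the traversal contributes nothing beyond the single loop $(\pm m')$ and no shift of the double cosets — is the crux; once it is in place, relation (\ref{eq9}) follows, the two signs corresponding to the two directions of traversal.
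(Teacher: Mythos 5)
Your chart-level computation coincides with the paper's: one end of $H_1$ crosses the label-$(m')$ loop along the core loop of $H_2$ exactly once, a CI-M2-move deposits a loop $(\pm m')$ around that end, and pushing it along the tube of $H_1$ turns it into a loop along the cocore, giving the labels $(0, n\pm m')$. The gap is in what happens to the geometry of $H_1$ afterwards. Whichever closed path you use (a meridian circle on the tube of $H_2$, or a loop in $B$ around its attaching disk --- either reading of ``parallel to the cocore''), after the traversal the tube of $H_1$ is left wrapped around $H_2$, and you cannot undo this wrapping by sliding the end back the way it came, since that re-crosses the $(m')$ loop and cancels the deposited label. This is exactly the point at which the paper's proof invokes the \emph{crossing change of tubes} (Section \ref{sec4}, Figure \ref{fig15}): the end of $H_1$ is returned to its original position by letting its tube pass through the tube of $H_2$, so that the chart gain survives while the pair of handles returns to standard position. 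Your proposal never uses this move, and without it the identification of the result with the presentation $h(0,n\pm m')+h'(m',n')$ is not justified.

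Your substitute --- checking that the double coset $PhP$ is unchanged --- does not close this gap. Boyle's classification concerns a single 1-handle attached to $F_0$: cores in the same double coset are related by an ambient isotopy rel $F_0$, but that isotopy knows nothing about $H_2$ or the chart; it may drag $H_2$ out of standard position or sweep through the braid, so it does not by itself return the charted pair $(H_1,H_2)$ to the claimed form. Making such an argument rigorous (``pairs of charted handles are determined by their double cosets and labels'') again requires resolving tube--tube crossings, i.e., precisely the crossing-change move you omit. There is also a smaller inaccuracy in the group-theoretic step: a loop parallel to the cocore of $H_2$ is a meridian of the handle $h'$, not a loop on $\partial N(F_0)$, so membership in $P$ does not follow from its being a meridian; what is true is that it bounds a cross-sectional disk of the solid handle $B^2\times I$ of $h'$, which is disjoint from $F_0$, hence it is null-homotopic in $\mathbb{R}^4-F_0$ and trivial in $G(F_0)$. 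That is the conclusion you need, but the justification should go through the cross-sectional disk rather than through the peripheral subgroup.
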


\begin{proof}
We denote by $H_1$ and $H_2$ the first and second 1-handles with chart loops, respectively. Recall that the 1-handles are attached to a 2-disk $B$. 
Move the terminal end of $H_1$ along a path in $B$ across the loop with the label $(m')$ along the core loop of $H_2$. 
By a CI-M2-move, the loop with the label $(m')$ appears along the boundary of the end of $H_1$. Apply a crossing change of tubes for the 1-handles, and move the end of $H_1$ to its original position. Then move the loop with the label $(m')$ along $H_1$ so that it becomes a loop along the cocore. Then $H_1$ is presented by $h(0,n- m')$ and $H_2$ is unchanged; thus we have the second relation (see Figure \ref{fig15}).  
The other relation is obtained by moving the initial end of $H_1$. 
\end{proof}

\begin{figure}[ht]
\centering
\includegraphics*[width=13cm]{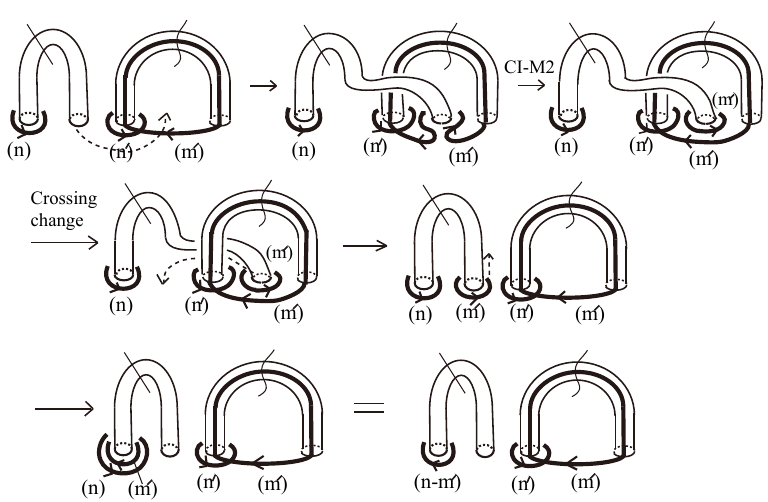}
\caption{$h(0,n)+ h'(m',n') \sim h(0,n - m') + h'(m',n')$.}
\label{fig15}
\end{figure}

\begin{definition}\label{def4-7}
We call these moves (\ref{eq1})--(\ref{eq9}) {\it 1-handle moves}.

\end{definition}

\section{Simplifying 2-dimensional braids with repeated pattern}\label{sec5}

After each deformation in theorems, we denote by $H_j$ the $j$th 1-handle with chart loops. 

\begin{proof}[Proof of Theorem \ref{thm2}]
First we consider the case when some $m_j$ is not zero. 
By changing the indices of 1-handles if necessary, we can assume that $H_1=h_1(m_1, n_1)$ satisfies $m_1 \neq 0$. 
Let $m_{1,1}=\mathrm{gcd} \{m_1,m_2\}$. By applying 1-handle moves (\ref{eq4}) to 1-handles $H_1$ and $H_2$ several times, sliding $H_1$ along $H_2$ or $H_2$ along $H_1$, and applying (\ref{eq1}) if necessary, let us deform $H_1 +H_2$ to the form $h_{1,1}(m_{1,1},n_{1,1}) + h_{2,1}(0, n_{2,1})$ for 1-handles $h_{1,1}, h_{2,1} \in \langle h_1, h_2 \rangle$, and integers $n_{1,1}$ and $n_{2,1}$. Repeat this process to $H_1$ and the other 1-handles $H_j$ ($j=3,\ldots,g$). Then $(F, \Gamma)$ is deformed to 

\begin{equation*}
 h_1' (m, n_1')+ h_{2,2}(0,n_{2,2}) + h_{3,2}(0,n_{3,2}) + \cdots + h_{g,2}(0,n_{g,2}), 
\end{equation*}
where $m=\mathrm{gcd}_{1 \leq j \leq g}\{m_j\}$, and  
$h_1'$, $h_{2,2}, \ldots, h_{g,2} \in \langle h_1, \ldots, h_g \rangle$, and $n_1'$, $n_{2,2} \ldots, n_{g,2}$ are integers. The other case when all $m_j$ are zero is included in this result. 

Then, apply 1-handle moves (\ref{eq6}) to the second 1-handle $H_2$ and the third 1-handle $H_3$ repeatedly and apply (\ref{eq1}) if necessary, until $H_2+H_3$ is deformed to the form $h_{2,3}(0,n_{2,3}) + h_{3,3}(0,0)$, where $n_{2,3}=\mathrm{gcd}\{n_{2,2}, n_{3,2}\}$. Repeat this process to $H_2$ and $H_j$ ($j=4,\ldots, g$).  
Then $(F, \Gamma)$ is deformed to 
\begin{equation*}
 h_1' (m, n_1')+ h_2'(0,n_2') +\sum_{j=3}^g h_j'(0,0), 
\end{equation*}
where $m=\mathrm{gcd}_{1\leq j\leq g}\{m_j\}$, 
$h_1',\ldots, h_g' \in \langle h_1, \ldots, h_g \rangle  < P\backslash G(F_0)/P$, and $n_1', n_2'$ are integers. Thus we have the required result. 
\end{proof}

\begin{proof}[Proof of Theorem \ref{thm1}]
By Theorem \ref{thm2}, $\sum_{j=1}^g 1(m_j, n_j)$ is deformed to 
\begin{equation*}
 1(m_1', n_1')+ 1(0,n_2') +\sum_{j=3}^g 1(0,0), 
\end{equation*}
where $m_1'=\mathrm{gcd}_{1 \leq j\leq g}\{m_j\}$. 
By applying (\ref{eq1}) if necessary, we can assume that $n_2' \geq 0$. Further, by  (\ref{eq9}), we can assume that $0 \leq n_2' < m_1'$. 

We show that $1(m_1', n_1')+ 1(0, n_2')$ is deformed to the form $1(m', n')+1(0, 0)$ for integers $m'$ and $n'$. When $n_2'=0$, we have the required form.  
When $n_2'>0$, by applying (\ref{eq3}) to $H_2$, $1(m_1', n_1')+ 1(0, n_2')$ is deformed to
$1(m_1', n_1')+ 1(n_2',0)$. Then, by the same argument of the proof of Theorem \ref{thm2} and  (\ref{eq1}) and (\ref{eq9}) if necessary, $1(m_1', n_1')+ 1(n_2',0)$ is deformed to $1(m_2', n_{1,2}')+1(0, n_{2,2}')$, where $m_2'=\mathrm{gcd}\{m_1', n_2'\} <m_1'$ and $0 \leq n_{2,2}'<m_2'$. If $n_{2,2}'=0$, then we have the required form. If $n_{2,2}'>0$, then apply the argument again. By repeating this process, we have $1(m_l', n_{1,l}')+1(0, n_{2,l}')$ with $m_l' \geq 1$ and $n_{2,l}'=0$, or $m_l'=1$ and $0 \leq n_{2,l}'<m_l'$. In both cases, $n_{2,l}'=0$ and we have the required form. 

It remains to show that $1(m',n') \sim 1(k,0)$ or $1(k,k)$ for an integer $k$. 
By applying (\ref{eq2}) and (\ref{eq3}) repeatedly to reduce the absolute value of $m'$ and $n'$, we can deform $1(m',n')$ to $1(k,0)$ or $1(k,k)$ for an integer $k$. 
\end{proof}

\begin{proof}[Proof of Theorem \ref{thm3}]
By applying (\ref{eq7}) to $H_j + H_{g+1}$ and sliding $H_j$ along $H_{g+1}$ for $j=1,\ldots,g$, and applying (\ref{eq1}) to $H_{g+1}$ if necessary, 
$(F, \Gamma)+1(0,0)$ is deformed to 
\begin{equation*}
 \sum_{j=1}^g h_j(m_j, n_j) + 1(m,0),
\end{equation*}
where $m=\mathrm{gcd}_{1 \leq j \leq g}\{m_j\}$. 
By applying (\ref{eq4}) to $H_{g+1} + H_j$  and sliding $H_{g+1}$ along $H_j$ $\frac{m_j}{m}$ times for $j=1,\ldots, g$, we have 
\begin{equation}\label{eq5-1}
\sum_{j=1}^g h_j(0,n_j) + h(m,n), 
\end{equation}
where $h \in \langle h_1, \ldots, h_g\rangle$ and $n=\sum_{j=1}^g \frac{m_j n_j}{m}$. 
Since the order of application of these moves does not effect the result (\ref{eq5-1}), we can see that the presentation of $h$ is independent of the order of the generators $h_1, \ldots, h_g$ (\cite{Boyle1}).  
By (\ref{eq9}), we have 
\begin{equation*}
 \sum_{j=1}^g h_j(0, \tilde{n}_j) + h(m,n),
\end{equation*}
where $\tilde{n}_j \in \{0,\ldots, m-1\}$. 
\end{proof}

\begin{proof}[Proof of Theorem \ref{thm4}]
We denote $\sum_{j=1}^{g}1(m_j, n_j)$ by $S$. 
Since $1(k,k)$ and $1(k,0)$ ($k \neq 0$) are distinct \cite{Livingston} (see also \cite{Boyle2}), and it follows from \cite{Sanderson, CKSS} that addition of trivial 1-handles $1(0,0)$ does not change the type (1) or (2) in Theorem \ref{thm1}, we see that $S+1(0,0)$ has the same type with $S$. Hence we will deform $S+1(0,0)$ by 1-handle moves.

By (\ref{eq1}) and (\ref{eq3}), $H_j$ can be presented by $1(m_j, n_j)$, $1(-m_j, -n_j)$, $1(n_j, -m_j)$, or $1(-n_j, m_j)$ $(j=1,\ldots,g)$. Thus, 
by applying (\ref{eq7}) to $H_j + H_{g+1}$ and sliding $H_j$ along $H_{g+1}$ $(j=1,\ldots,g)$, 
$S + 1(0,0)$ is deformed to 
\begin{equation*}
 \sum_{j=1}^g 1(m_j, n_j) + 1(\mathrm{gcd},0),
\end{equation*}
where $\mathrm{gcd}=\mathrm{gcd}_{1 \leq j \leq g}\{m_j, n_j\}$.  
 
By applying (\ref{eq4}) to $H_{g+1}+ H_j$ $\frac{m_j}{\mathrm{gcd}}$ times for $j=1,\ldots, g$, we have
\begin{equation*}
 \sum_{j=1}^g 1(0, n_j) +1(\mathrm{gcd}, \sum_{j=1}^g \frac{m_jn_j}{\mathrm{gcd}}). 
\end{equation*}
Since $n_j$ is divided by $\mathrm{gcd}$, by applying (\ref{eq9}) to $H_j + H_{g+1}$, we have 
\begin{equation*}
 \sum_{j=1}^g 1(0, 0)+ 1(\mathrm{gcd},\sum_{j=1}^g\frac{m_jn_j}{\mathrm{gcd}}). 
\end{equation*}
Since $\frac{\sum_{j=1}^g m_j n_j}{\mathrm{gcd}}$ is divided by $\mathrm{gcd}$, by (\ref{eq2}) we see that $S + 1(0,0)$ is equivalent to 
  $1(\mathrm{gcd}, \mathrm{gcd})+\sum_{j=2}^g1(0,0)$ if $\frac{\sum_{j=1}^g m_jn_j}{(\mathrm{gcd})^2}$ is odd, and 
$1(\mathrm{gcd}, 0)+\sum_{j=2}^g1(0,0)$ if $\frac{\sum_{j=1}^g m_jn_j}{(\mathrm{gcd})^2}$ is even, which implies the required result.
\end{proof}

\section{Unbraiding 2-dimensional braids without branch points}\label{sec6}

\begin{proof}[Proof of Theorem \ref{thm5}]
The chart $\Gamma$ consists of several loops with the label $(1)$. 
Applying a CI-M2-move to a chart loop $E$ and $H=1(1,0)$, and then sliding an end of $H$ along $E$, the union of $E$ and $H$ is deformed to $H=h'(1,n)$ for a 1-handle $h'=\alpha \rho \alpha^{-1}$ and an integer $n$, where $\rho$ is a closed path in $\partial N(F)$ with a base point $x'$ such that the projection of $\rho$ to $F$ is $E$, and $\alpha$ is a path in $\partial N(F)$ connecting the base point $x$ of the knot group $G(F)$ and $x'$; thus we can eliminate $E$. Repeat this process to every loop and $H$, and the chart loops gather on $H$ with presentation $h(1, n')$ for a 1-handle $h$ attached to $F$ and an integer $n'$.  Then, by applying 1-handle moves  (\ref{eq2}), and applying (\ref{eq1}) if necessary, $(F, \Gamma) +1(1,0)$ is deformed to $(F, \emptyset) + h(1, \delta)$, where $\delta \in \{0, 1\}$. 
\end{proof}

Now, we consider $(F, \Gamma)$ for any surface-knot $F$ and a chart $\Gamma$ without black vertices. Let $N$ be the degree of $\Gamma$. 

Among the 6 edges connected with a white vertex, we will call {\it diagonal edges} a pair of edges between which there are two edges on each side; there are three pairs of diagonal edges, one consisting of middle edges, and the other two consisting of non-middle edges. 
We denote by $h(a,b)$ a 1-handle $h$ with a chart without black vertices, attached to 
$F$, such that the chart is contained in the union of regular neighborhoods of the core loop and cocore in $F$, and the cocore and the core loop present braids $a$ and $b$, respectively. Note that $a$ and $b$ are commutative, and for any commutative braids $a$ and $b$, $h(a,b)$ is well-defined \cite{N}. 
In this paper, we consider 1-handles with chart loops, in the form $h(\sigma_i, e)$, $h(\sigma_i, \sigma_j^{\epsilon})$, and $h(e,e)$, where $i,j \in \{1.\ldots,N-1\}, |i-j|>1$ and $\epsilon \in \{+1, -1\}$. 

\begin{remark}\label{rem6-1}
By the following lemma, we see that a 1-handle $H=h(a,b)$ is not determined from the presentation; a 2-dimensional braid with repeated pattern is a special case where the presentation is well-defined. 
In order to make the presentation well-defined, it is necessary to determine the region which contains the chart associated with $H$, and further it is necessary to assign the place where we attach $H$. 
\end{remark}

\begin{lemma}\label{lem6-2}
A 1-handle $h(e,a)$ becomes $h(e,bab^{-1})$ after crossing chart edges presenting $b$.
In particular, a 1-handle $h(e,e)$ can be moved anywhere.
\end{lemma}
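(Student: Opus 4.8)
The plan is to track what happens to the chart loop along the core loop of $H=h(e,a)$ as the 1-handle is slid across a band of chart edges presenting the braid $b$, using only CI-moves and Roseman moves together with the 1-handle moves introduced in Section \ref{sec4}. First I would set up the local picture: the 1-handle $H$ has an empty loop along its cocore (since $a$ is the core-loop label and the cocore label is $e$) and a chart presenting $a$ along its core loop, with the base point of the core loop determined by its intersection with the cocore as in Figure \ref{fig1}. The edges presenting $b$ form a band transverse to the path $\rho$ along which we move $H$, with orientations coherent with the normal of $\rho$.

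The key steps, in order, would be as follows. When the terminal end of $H$ is pushed across the band presenting $b$, a copy of the edges presenting $b$ together with their inverse $b^{-1}$ is created along the boundary of the end of $H$ by a CI-M2-move (the same mechanism used in the proof of Claim \ref{claim1}, where a loop with label $(1)$ split off when crossing an edge, now carried out for a general braid $b$ rather than a single $\sigma$). I would then slide these newly created edges along the tube of $H$, so that the copy of $b$ and the copy of $b^{-1}$ end up on opposite sides of the existing chart presenting $a$ along the core loop. Reading the resulting word along the core loop from the base point, the chart now presents $b\,a\,b^{-1}$. The orientations work out because we arranged the edges of $b$ to be coherent with the normal of $\rho$, so the copy created on one side is $b$ and the copy on the other side is $b^{-1}$; this is exactly the conjugation claimed. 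For the special case $a=e$, the chart along the core loop presents $b\,e\,b^{-1}=e$, which is again an empty loop and can be removed by CI-M1- and CI-M2-moves, so $h(e,e)$ returns to $h(e,e)$ after the move and can therefore be moved across any chart edges, i.e.\ anywhere.

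The main obstacle I expect is bookkeeping of the \emph{orientations and labels} of the edges of the created band as it is dragged over the tube, since unlike the repeated-pattern case a chart without black vertices may have white vertices and crossings inside the band presenting $b$, so one must verify that the CI-M2-move and the subsequent slide genuinely carry the whole braid word $b$ (not just its abelianization) and that no white vertex obstructs the slide. I would handle this by noting that, since $H$ crosses the band transversally and the band lies in a regular neighborhood disjoint from the core loop's chart, the relevant local moves are precisely CI-moves applied sheet-by-sheet, and the word recorded along a small transversal arc is conjugation-invariant under exactly these moves; the fact that the cocore label stays $e$ throughout guarantees the created $b$ and $b^{-1}$ cancel on the cocore side and survive only as conjugation on the core-loop side.
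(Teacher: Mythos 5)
Your overall plan is the paper's own: invoke the mechanism of Claim \ref{claim1} (a CI-M2-move creates copies of the crossed edges, which are then slid onto the tube of the 1-handle so that the core loop reads a conjugate of $a$), and dispose of $h(e,e)$ by cancelling the resulting pair. The paper's proof is exactly this, with one extra simplification you skip: it first reduces to $b=\sigma_i$, i.e.\ crosses one edge at a time at a generic point. That reduction also makes your worry about white vertices and crossings inside the band presenting $b$ evaporate, since each created loop is then a simple parallel copy of a single edge, regardless of what the chart looks like elsewhere.

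There is, however, a genuine geometric error in your key step. You create \emph{both} the copy of $b$ and the copy of $b^{-1}$ ``along the boundary of the end'' of $H$ --- i.e.\ around a single end --- and then slide them ``to opposite sides of the existing chart presenting $a$.'' As described, this fails. If both copies sit around one end, then after sliding them onto the tube they are adjacent parallel bands with nothing between them, so they form a cancelling pair (Figure \ref{fig7}) and can simply be removed, returning $h(e,a)$ --- which contradicts the lemma whenever $a$ and $b$ do not commute (e.g.\ $a=\sigma_2$, $b=\sigma_1$, where $\sigma_1\sigma_2\sigma_1^{-1}\neq \sigma_2$). To instead place the two copies on opposite sides of the $a$-chart, one of them would have to pass through the band presenting $a$; but chart edges can cross only when their labels $i,j$ satisfy $|i-j|>1$, so for equal or adjacent labels this slide is impossible --- and this impossibility is precisely the reason the answer is the conjugate $bab^{-1}$ rather than $a$. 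The correct mechanism, which is what Claim \ref{claim1} actually does, is that the \emph{whole} handle is moved: by definition, moving $h$ along a path slides the 2-disk $B$ together with \emph{both} ends, the CI-M2-move produces a single loop around $B$, and this loop splits into one loop around \emph{each} end. Those two loops are crossed by the core loop with opposite signs and, once slid onto the tube, automatically flank the $a$-chart; no passing through $a$ is ever required. (A minor terminological point: for $h(e,a)$ the band presenting $a$ runs along the \emph{cocore} and is read by the core loop; your phrase ``chart presenting $a$ along its core loop'' inverts the paper's convention, though your intended meaning is recoverable.)
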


\begin{proof}
It suffices to show the case when $b=\sigma_i$. By the same argument as in the proof of Claim \ref{claim1}, we have the result. 
\end{proof}

Before the proofs of Theorems \ref{thm6} and \ref{thm7}, we prepare several lemmas. We denote by $\sum_{j=1}^{N-1} 1(\sigma_j,e)$ a set of 1-handles with chart loops, attached to a small 2-disk $B_0$ in $F$ such that $B_0$ is disjoint with the chart on $F$. 

\begin{lemma}\label{lem6-3}
A set of 1-handles $\sum_{j=1}^{N-1} 1(\sigma_j,e)$ can be moved anywhere. 
\end{lemma}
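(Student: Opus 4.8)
\textbf{Proof proposal for Lemma \ref{lem6-3}.}

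The plan is to show that the entire set $\sum_{j=1}^{N-1} 1(\sigma_j, e)$, viewed as a single package of $N-1$ parallel $1$-handles glued to the $2$-disk $B_0$, can be slid along any path $\rho$ in $F$ without changing its presentation up to $1$-handle moves. The guiding principle is exactly the one used in the proof of Claim \ref{claim1} and in Lemma \ref{lem6-2}: the only nontrivial thing that can happen when the package crosses the chart is a CI-M2-move producing loops around the ends of the handles, and these loops can be absorbed back into the core loops and cocores. The new feature here, compared with the repeated-pattern case, is that the handles carry chart loops labelled by distinct generators $\sigma_1, \ldots, \sigma_{N-1}$, so I must track how each $\sigma_j$-loop interacts with a crossed edge of some label $i$.

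First I would reduce to the generic local situation: it suffices to consider moving the package across a single chart edge $E$ of label $i$, with $E$ transverse to $\rho$ and oriented coherently with the normal of $\rho$ (the opposite orientation and the passage of the initial versus terminal end are handled symmetrically, exactly as in Claim \ref{claim1} and Lemma \ref{lem6-2}). When the package crosses $E$, a CI-M2-move creates, along the boundary of the disk carrying the handles, a band of chart presenting $\sigma_i$; this band then splits and surrounds the ends of the handles. Pushing the resulting loop along each handle so that it becomes a loop along the cocore, I invoke Lemma \ref{lem6-2}: the core loop of the handle $1(\sigma_k, e)$, after being crossed by the $\sigma_i$-band, is conjugated to $1(\sigma_i \sigma_k \sigma_i^{-1}, e)$ up to the compensating cocore loops. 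The point is that the $\sigma_i$-loops introduced on the two ends of the package cancel against each other across the whole band of $N-1$ handles, since the package crosses $E$ as a rigid block and the incoming and outgoing copies of $\sigma_i$ are inverse to one another; what remains is a relabelling of the core loops by simultaneous conjugation $\sigma_k \mapsto \sigma_i \sigma_k \sigma_i^{-1}$.

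The key step, and the one I expect to be the main obstacle, is verifying that this simultaneous conjugation of all the labels $\{\sigma_1, \ldots, \sigma_{N-1}\}$ by $\sigma_i$ does not actually change the set $\sum_{j=1}^{N-1} 1(\sigma_j, e)$ up to $1$-handle moves. Here I would use the fact that $\{\sigma_1, \ldots, \sigma_{N-1}\}$ is the standard generating set of $B_N$ and that conjugation by $\sigma_i$ permutes this set only up to braid relations: the conjugated loops can be reorganized, by CI-moves together with handle slides (moves \eqref{eq4} and the crossing change of tubes), back into loops labelled by the original generators. Because the package is attached to the single fixed disk $B_0$ and carries exactly one handle per generator, the braid relations $\sigma_i \sigma_k \sigma_i^{-1} = \sigma_k$ for $|i-k|>1$ and $\sigma_i \sigma_{i\pm 1} \sigma_i^{-1} = \sigma_{i\pm1}^{-1}\sigma_i \sigma_{i\pm1}$ translate, after applying the handle slides, into a reshuffling of the handle labels that returns the set to its original form. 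Once this invariance is established for a single edge crossing, iterating along $\rho$ and assembling the elementary crossings gives that $\sum_{j=1}^{N-1} 1(\sigma_j, e)$ may be moved anywhere, which is the assertion of the lemma.
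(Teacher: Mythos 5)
Your overall strategy---push the whole package rigidly across a chart edge $E$ of label $i$, record the effect as label-$i$ loops around the handle ends, and then repair the resulting ``simultaneous conjugation'' $\sigma_k \mapsto \sigma_i\sigma_k\sigma_i^{-1}$---has a genuine gap, and it occurs exactly where you predicted the main obstacle would be. The mechanism of Claim \ref{claim1} and Lemma \ref{lem6-2} (create loops around the ends by CI-M2-moves, then push them along the tube onto the cocore) is only available when the loops being pushed can legally cross the chart already sitting on the handle: a crossing in a chart requires labels $i,j$ with $|i-j|>1$, which is why the paper only ever considers $h(\sigma_i,\sigma_j^{\epsilon})$ with $|i-j|>1$. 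For the two handles $1(\sigma_{i\pm 1},e)$ in the package, the label-$i$ loops produced around their ends would have to cross the core-loop-parallel loops of label $i\pm 1$; that intersection is not a permitted chart vertex, so the intermediate configuration is not a chart at all and the crossing of $E$ cannot be performed this way. Resolving the clash would require introducing white vertices realizing the braid relation $\sigma_i\sigma_{i\pm1}\sigma_i^{-1}=\sigma_{i\pm1}^{-1}\sigma_i\sigma_{i\pm1}$, and then removing them again while returning every handle to the form $1(\sigma_j,e)$; none of the moves you invoke (CI-M2-moves, handle slides as in \eqref{eq4}, crossing changes of tubes) produces this, and your proposal only asserts that the ``reshuffling'' works rather than proving it. So the key step is not verified, and in fact it is the entire difficulty of the lemma.

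The paper's proof avoids conjugation altogether by a different and much simpler idea: to cross an edge $E$ of label $i$ separating $D_1$ from $D_2$, apply a CI-M2-move to $E$ and the label-$i$ loop of the matching handle $H=1(\sigma_i,e)$. Merging these two same-label edges re-routes the chart so that there is a path $\rho$ from $D_1$ to $D_2$ crossing no chart edges; all the other handles (including the adjacent-label ones, which are the problematic cases for you) are then transported through this corridor without ever meeting an edge, and finally $H$ itself is carried across by another CI-M2-move. In other words, the matching-label handle is used as a ``gate,'' so no handle is ever forced to cross an edge of adjacent label. If you want to salvage your approach, you would need to first prove a lemma handling the adjacent-label crossing (essentially re-deriving the gate trick or a white-vertex calculus), at which point the paper's argument is the shorter route.
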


\begin{proof}
It suffices to show that this set of 1-handles can move across a chart edge with the label $i$. Let us denote by $D_1$ and $D_2$ the regions divided by the edge such that the 1-handles are attached to $D_1$. 
Apply a CI-M2-move on the edge and $H=1(\sigma_i, e)$. Then there is a path $\rho$ from $D_1$ to $D_2$ which crosses no edges. 
Then we can import the other 1-handles to $D_2$ by moving them along $\rho$. Applying a CI-M2-move we can move $H$ to the other region $D_2$ (see Figure \ref{fig16-2}). 
\end{proof}

\begin{figure}[ht]
\centering
\includegraphics*[width=13cm]{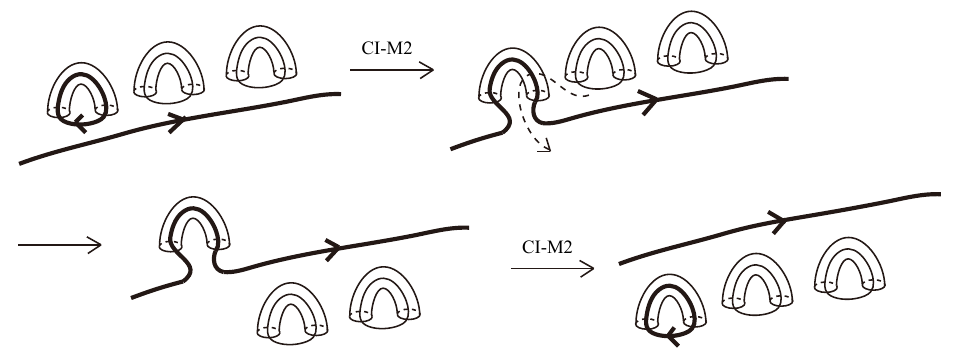}
\caption{Moving a set of 1-handles with chart loops across a chart edge. For simplicity, we omit the labels of the chart edges and the orientations of chart loops.}
\label{fig16-2}
\end{figure}

Remark that similar result holds true for non-trivial 1-handles $\sum_{j=1}^{N-1} h_j(\sigma_j,e)$, and $h_j(\sigma_j,e)$ can be replaced by $h_j(\sigma_j, b)$ for any $N$-braid $b$ which commutes with $\sigma_j$.  

\begin{lemma}\label{lem6-4}
Together with $\sum_{j=1}^{N-1} 1(\sigma_j, e)$, a 1-handle $1(e,e)$ can be transformed to $1(\sigma_i, e) $ for any $i \in \{1,\ldots, N-1\}$:

\[
\sum_{j=1}^{N-1} 1(\sigma_j,e) + 1(e,e) \sim \sum_{j=1}^{N-1} 1(\sigma_j,e) + 1(\sigma_i, e).
\]
\end{lemma}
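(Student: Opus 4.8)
The plan is to prove the equivalence
\[
\sum_{j=1}^{N-1} 1(\sigma_j,e) + 1(e,e) \sim \sum_{j=1}^{N-1} 1(\sigma_j,e) + 1(\sigma_i, e)
\]
by producing the loop with label $\sigma_i$ on the trivial handle $1(e,e)$ at the cost of ``borrowing'' it from the standing handle $1(\sigma_i,e)$ already present in the set, and then showing that $1(\sigma_i,e)$ can be restored by the fact that the whole configuration of handles sits on a common $2$-disk. First I would recall from Lemma \ref{lem6-3} that the block $\sum_{j=1}^{N-1}1(\sigma_j,e)$ is freely movable across any chart edge, so I may bring the trivial handle $1(e,e)$ and the handle $1(\sigma_i,e)$ adjacent to each other on the fixed $2$-disk $B_0$ with no intervening chart edges; by Lemma \ref{lem6-2} the trivial handle $1(e,e)$ can be slid to any position, so placing it next to $1(\sigma_i,e)$ costs nothing.

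The key computational step is a local manipulation on the pair $1(\sigma_i,e) + 1(e,e)$. The handle $1(\sigma_i,e)$ carries a chart loop with label $\sigma_i$ along its core loop. I would apply a CI-M2-move to split this $\sigma_i$-loop into two parallel $\sigma_i$-loops (so that the handle still reads $1(\sigma_i,e)$ once the pair is recombined), and then slide one of the resulting loops across to surround the adjacent trivial handle $1(e,e)$, exactly as in the proof of Claim \ref{claim1} where a loop created by a CI-M2-move is pushed along a handle until it lies along the core loop. After moving that loop onto the core loop of the formerly trivial handle, $1(e,e)$ is converted into $1(\sigma_i,e)$. The delicate point is bookkeeping of labels and orientations: the CI-M2-move is precisely the mechanism (Figure \ref{fig6}(b) and Figure \ref{fig7}) that lets parallel loops of the same label be created or cancelled, so I must check the orientations are coherent with the core loops (as stipulated at the end of Section \ref{sec3}) and that no spurious cocore contribution is introduced.

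The main obstacle is ensuring that the handle originally labelled $1(\sigma_i,e)$ is genuinely \emph{restored} and not merely depleted: naively moving a $\sigma_i$-loop off of it would turn it into $1(e,e)$, merely swapping which handle is trivial. The resolution is that one first duplicates the $\sigma_i$-loop via CI-M2 so that \emph{two} copies are present; one copy stays on the original handle and the other is exported to the trivial handle. I would verify this duplication is legitimate by the remark following Figure \ref{fig7} that for parallel loops we may add labels for any integers, so splitting a label-$(1)$ loop into two and reabsorbing the redundancy is a sequence of CI-M2 and CI-M1 moves. Concretely, I expect the cleanest route is to apply a CI-M2-move directly between the $\sigma_i$-edge emanating from $1(\sigma_i,e)$ and the handle $1(e,e)$, which both splits off a $\sigma_i$-loop and deposits it around $1(e,e)$ in one stroke; sliding that loop along the trivial handle to lie along its core loop then gives $1(\sigma_i,e)$, while the original handle retains its own $\sigma_i$-loop. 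I would conclude by reversing the movements of Lemma \ref{lem6-3} to return all handles to their positions, yielding the stated equivalence.
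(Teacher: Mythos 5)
Your proposal has a genuine gap at its central step: the claimed ``duplication'' of the $\sigma_i$-loop is not a valid chart move. A CI-M2-move either cancels/creates a pair of parallel loops with the same label and \emph{opposite} orientations (this is exactly what Figure \ref{fig7} shows), or splits one loop into loops whose contributions \emph{sum} to that of the original; the remark after Figure \ref{fig7} that ``for parallel loops, we can add the labels for any integers'' allows presenting a label-$(1)$ loop as $(1)+(0)$ or $(2)+(-1)$, never as $(1)+(1)$. Two coherently oriented parallel loops with label $i$ present $\sigma_i^2$, not $\sigma_i$, so no sequence of C-moves produces one copy to keep and one copy to export. In fact no argument of the kind you propose can succeed at all: for charts without black vertices, C-moves and isotopies of the chart on a fixed surface preserve the braid monodromy $\pi_1\to B_N$ (up to conjugation), and the two sides of the lemma assign different monodromy to the cocore of the last 1-handle ($e$ versus $\sigma_i$). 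Your sequence of moves keeps the surface fixed --- Lemma \ref{lem6-3} is invoked only to reposition handles and is explicitly undone at the end --- so its net effect on the chart is by C-moves and isotopy, which cannot realize the stated equivalence. (There is also a homological obstruction: a loop along the core loop of one handle cannot even be isotoped to a loop along the core loop of another handle, since they lie in different classes of $H_1$.)

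The missing ingredient is the \emph{handle slide}. The paper's proof is one line: apply the 1-handle move (\ref{eq7}) of Lemma \ref{lem4-5} to $1(\sigma_i,e)+1(e,e)$, which with $H_1=1(\sigma_i,e)$ and $H_2=1(e,e)$ yields $1(\sigma_i,e)+1(\sigma_i^{\pm 1},e)$, and (\ref{eq1}) fixes the sign. In the proof of (\ref{eq4}) the loop-splittings and CI-M2-moves you describe do occur, but they are interleaved with sliding the terminal end of $H_1$ over $H_2$; it is precisely this ambient isotopy --- which moves the handles themselves and hence acts nontrivially on $\pi_1$ of the surface --- that makes the apparent copying of the loop onto the second handle consistent with the monodromy. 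A secondary error, subordinate to the main one: a loop surrounding an end of a handle, pushed along the handle as in Claim \ref{claim1}, becomes a loop along the \emph{cocore}, so even granting your duplication you would obtain $1(e,\sigma_i)$ rather than the required $1(\sigma_i,e)$. To repair the proof, replace your key computational step by the slide of $1(\sigma_i,e)$ along $1(e,e)$ as in Lemma \ref{lem4-5}; the positioning steps via Lemmas \ref{lem6-2} and \ref{lem6-3} can then stay as you wrote them.
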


\begin{proof}
By applying a 1-handle move similar to (\ref{eq7}) in Lemma \ref{lem4-5} to $1(\sigma_i, e) + 1(e,e)$, we have the result. 
\end{proof}

\begin{lemma}\label{lem6-5}
When we have a 1-handle $H=1(\sigma_i,e)$ near a non-middle edge (respectively, an edge) with the label $i$ of a white vertex (respectively, a crossing), by sliding one end of $H$ along the diagonal edges, we can have the vertex on $H$ (see Figures \ref{fig16} and \ref{fig17}). 
\end{lemma}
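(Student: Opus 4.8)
```latex
The plan is to prove Lemma \ref{lem6-5} by a direct geometric/chart-theoretic argument, treating the white-vertex case and the crossing case in parallel since both reduce to the same local move: sliding one end of the 1-handle $H=1(\sigma_i,e)$ along a suitable pair of diagonal edges so that the vertex is absorbed onto $H$. The key observation is that $H=1(\sigma_i,e)$ carries a chart loop with label $i$ along its core loop, and such a loop can be made to interact with any nearby edge of label $i$ via a CI-M2-move, exactly as in the proof of Claim \ref{claim1} and Lemma \ref{lem6-3}. So the first thing I would do is set up the local picture: place $H$ in a small disk adjacent to the white vertex (resp. crossing), with the chart loop of $H$ running parallel to an edge $E$ of label $i$ incident to the vertex.

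First I would handle the crossing case, which is the simpler of the two. A crossing is a degree-4 vertex where an edge of label $i$ crosses an edge of label $j$ (with $|i-j|>1$, so the labels do not interact algebraically). Given $H=1(\sigma_i,e)$ positioned near the edge of label $i$, I would slide one end of $H$ along this edge through the crossing, so that the crossing passes onto the cocore/core region of $H$. Here the diagonal edges at the crossing are the two collinear continuations of each strand; sliding one end of $H$ along the $i$-labeled diagonal edge transports the crossing point onto the 1-handle. Because the two labels commute (no white vertex is created or destroyed), the only bookkeeping is to track the orientation of the chart loop on $H$, which determines the sign $\epsilon$; this is routine and I would simply record it.

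Next I would treat the white-vertex case, which is where the real content lies. A white vertex has six incident edges, and by hypothesis $H$ sits near a \emph{non-middle} edge of label $i$. The essential point is that the three pairs of diagonal edges (recalled in the paragraph preceding the lemma) organize the six edges so that one can slide an end of $H$ along a diagonal pair and thereby drag the white vertex onto $H$ without disturbing the global chart outside the local disk. I would slide one end of $H$ along the non-middle edge toward the white vertex, apply a CI-M2-move to merge the chart loop of $H$ with the edge, and then continue sliding along the opposite diagonal edge so that the white vertex ends up lying on $H$, i.e. in the union of regular neighborhoods of the core loop and cocore. Throughout, I would point to Figures \ref{fig16} and \ref{fig17} to exhibit the intermediate configurations rather than spelling out every edge orientation.

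The hard part will be verifying that sliding along the diagonal edges is legitimate precisely because $H$ approaches a \emph{non-middle} edge: the orientations and labels around a white vertex (Figure \ref{fig2}) force the two non-middle diagonal pairs to have a compatible local structure, whereas an attempt to drag the vertex via a middle edge would obstruct the move. So the main obstacle is the careful orientation-and-label check confirming that the diagonal-edge slide is compatible with the white-vertex relation of Figure \ref{fig2}, and that after the slide the resulting chart still satisfies the chart conditions with the white vertex correctly situated on $H$. Once this compatibility is confirmed, the conclusion that the vertex lies on $H$ is immediate from the construction.
```
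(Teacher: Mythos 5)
Your proposal is correct and takes essentially the same route as the paper's proof: a CI-M2-move merging the chart loop of $H$ with the non-middle edge of the white vertex (respectively the edge of the crossing), followed by sliding one end of $H$ along the diagonal edges with further CI-M2-moves, exactly the sequence depicted in Figures \ref{fig16} and \ref{fig17}. Your additional remarks on orientation bookkeeping and on why a middle edge would obstruct the move are commentary beyond what the paper records (it delegates these checks to the figures), but they do not alter the argument.
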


For the deformation as in Lemma \ref{lem6-5}, we say that we {\it make a bridge} over a white vertex or a crossing by a 1-handle with a chart loop. 

\begin{figure}[ht]
\centering
\includegraphics*[width=13cm]{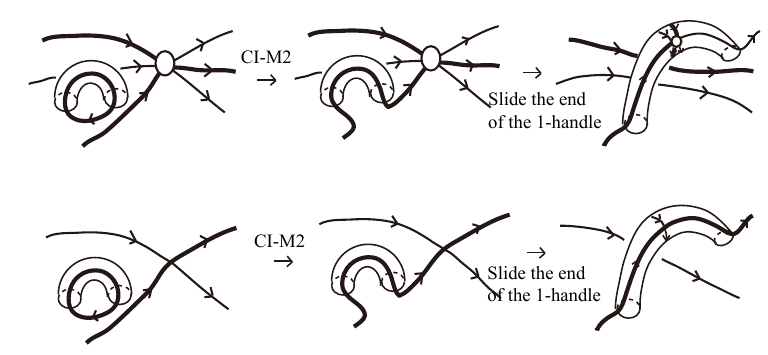}
\caption{Making a bridge over a white vertex or a crossing. For simplicity, we omit the labels of the chart edges.}
\label{fig16}
\end{figure}
 
\begin{figure}[ht]
\centering
\includegraphics*[width=13cm]{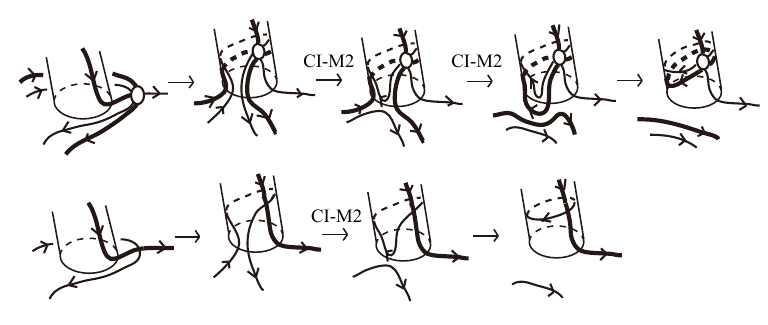}
\caption{Sliding an end of a 1-handle along diagonal chart edges. For simplicity, we omit the labels of the chart edges.}
\label{fig17}
\end{figure}
 
\begin{proof}[Proof of Lemma \ref{lem6-5}]
By applying a CI-M2-move to the chart loop of $H$ and the non-middle edge of the white vertex or the  edge of the crossing, and sliding an end of $H$ along the diagonal edges and applying CI-M2-moves, we have the result (see Figures \ref{fig16} and \ref{fig17}). 
\end{proof}
  
\begin{lemma}\label{lem6-6}
When we have a white vertex as in the left figure of Figure \ref{fig18} on a 1-handle, an addition of another 1-handle near the vertex induces the orientation reversal of the edges around the vertex. 
\end{lemma}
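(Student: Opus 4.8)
The plan is to treat this as a purely local statement and prove it by an explicit diagrammatic manipulation on the 1-handle, reading off the target configuration from the right-hand side of Figure \ref{fig18}. Everything in sight concerns only the six edges incident to the single white vertex $w$ sitting on the 1-handle, so I would support every move in a regular neighborhood of $w$ together with the added 1-handle. Each such move will be either a C-move (in practice a CI-M2-move together with CI-M1-moves, as used in Figure \ref{fig7}) or one of the 1-handle moves of Section \ref{sec4}, and all of these are equivalences; hence it suffices to produce a finite sequence carrying the left figure of Figure \ref{fig18} to the right figure.

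First I would bring the auxiliary 1-handle next to $w$ and place its two ends so that they straddle $w$ along a pair of diagonal edges, exactly in the manner of the bridging construction of Lemma \ref{lem6-5} (the handle can be transported to this position as in Lemma \ref{lem6-3}). Next I would apply CI-M2-moves between the chart loop of the new handle and the edges around $w$, producing for each edge a parallel companion arc of the opposite orientation but the same label. The whole point of adding the second handle is that it supplies the extra topological room needed to route these companion arcs coherently: by sliding one end of the new handle and applying a crossing change of tubes (the Claim illustrated in Figure \ref{fig9}), I can carry the companion arcs around the tube and past $w$ without creating black vertices and without altering the labels $i,j$ (with $|i-j|=1$) at $w$. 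Once the companions are positioned, absorbing each original edge against its oppositely oriented companion by CI-M2-moves followed by CI-M1-moves (Figure \ref{fig7}) leaves precisely the six edges with reversed orientation, which is the right figure of Figure \ref{fig18}; the surface orientation is untouched, only the edge orientations (equivalently the crossing signs) are flipped, and the result is again a genuine white vertex, now for the inverse braid relation.

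I expect the main obstacle to be the coherence check rather than any individual move. A single added 1-handle must reverse all six edges \emph{simultaneously and consistently}, since reversing only some of them would violate the orientation conditions around a white vertex recorded in Figure \ref{fig2}; moreover the reversals must be realized so that what remains is an admissible 1-handle with chart loops and no spurious edges. Verifying that the companion arcs can be threaded around $w$ so that the six reversals occur together is the delicate step, and it is exactly the content that Figure \ref{fig18} certifies. The argument itself is modeled on the proof of Claim \ref{claim1}, where a comparable passage of a chart loop across a handle is analyzed; the new feature here is that the loop must encircle a white vertex, which is why the extra handle and the crossing change of tubes enter.
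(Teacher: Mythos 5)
There is a genuine gap, and it starts with your framing. You treat the lemma as ``a purely local statement'' about the six edges at the white vertex $w$, but the hypothesis --- that the vertex is \emph{as in the left figure of Figure \ref{fig18}}, i.e.\ it sits on a 1-handle and has a non-middle edge $E$ that runs along the cocore and returns to the \emph{same} vertex as a middle edge --- is exactly what cannot be dropped. Reversing the orientations of all six edges turns the local configuration into the one presenting the inverse braid relation; since C-moves and 1-handle moves preserve the presented 2-dimensional braid, no sequence of moves supported in a neighborhood of $w$ alone can accomplish this. The paper's proof uses the global feature essentially: letting $i$ be the label of that special edge $E$, one adds $H=1(\sigma_i,e)$ near $E$ and makes a bridge over $E$ in the sense of Lemma \ref{lem6-5}; transferring the vertex onto $H$ along the edge that wraps around the cocore is, by itself, what produces the orientation reversal. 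No crossing change of tubes and no auxiliary cancellations are needed.

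Your substitute mechanism also does not work as described. A CI-M2-move creates or cancels a \emph{pair} of oppositely oriented parallel arcs of the same label; if you create such a pair next to an edge and then absorb the original edge against the oppositely oriented member (Figure \ref{fig7}), the surviving member has the \emph{same} orientation as the original edge, so nothing is reversed. Moreover, the added handle $1(\sigma_i,e)$ carries a single loop of a single label, whereas the six edges at $w$ carry the two labels $i$ and $j$ with $|i-j|=1$, so it cannot supply ``companion arcs'' for all of them. Finally, you concede that checking the six reversals occur ``simultaneously and consistently'' is the delicate step and that this is ``exactly the content that Figure \ref{fig18} certifies'' --- but that is the conclusion of the lemma, not something you may cite. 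The fix is to abandon the companion-arc scheme entirely and run the paper's argument: invoke the hypothesis to locate $E$, add $1(\sigma_i,e)$ near it, and verify (as in Figures \ref{fig16}--\ref{fig18}) that the bridge construction of Lemma \ref{lem6-5} lands the vertex on the new handle with all adjacent edges reversed.
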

 
\begin{proof}
When we have such a white vertex, let $i$ be the label of a non-middle edge $E$ which is along the cocore and is connected with the vertex again as a middle edge. Add $H=1(\sigma_i,e)$ near $E$ and have the white vertex on $H$ by making a bridge. Then the edges around the white vertex on $H$ become the orientation-reversed ones to the original edges; see Figure \ref{fig18}.
\end{proof}

\begin{figure}[ht]
\centering
\includegraphics*[width=12cm]{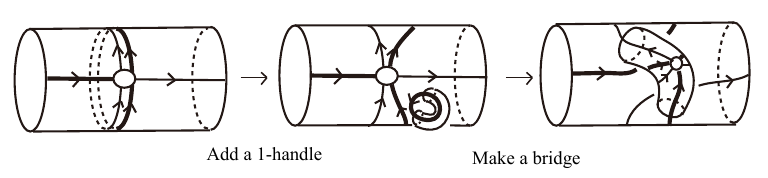}
\caption{Reversal of orientations of the edges around a white vertex on a 1-handle. For simplicity, we omit the labels of the chart edges.}
\label{fig18}
\end{figure}

We consider 1-handles in the form $h(\sigma_i, b)$, where $b$ is a braid commutative with $\sigma_i$. In particular, we consider 1-handles consisting of chart loops containing crossings. 

\begin{lemma}\label{lem6-7}
For 1-handles $h_1,\ldots, h_{N-1}$, and braids $b_1, \ldots, b_{N-1}$ such that $b_j$ commutes with $\sigma_j$ $(j=1,\ldots,N-1)$,  
we assume that $b_i$ has the presentation $\prod_{k} \sigma_{i_k}^{\epsilon_k}$ such that $|i-i_k|>1$ and $\epsilon_k \in \{+1, -1\}$. Then, 
we have 
\[
\sum_k 1(e,e)+\sum_{j=1}^{N-1}h_j(\sigma_j, b_j)\sim \sum_k 1(\sigma_i, \sigma_{i_k}^{\epsilon_k})+h_i(\sigma_i, e)+ \sum_{j\neq i}^{N-1}h_j(\sigma_j, b_j).  
\]
\end{lemma}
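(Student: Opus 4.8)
The plan is to strip the letters of the word $b_i=\prod_k\sigma_{i_k}^{\epsilon_k}$ off the core loop of $h_i$ one at a time, depositing each one onto a fresh copy of $1(e,e)$, and to proceed by induction on the number $r$ of letters. The handles $h_j(\sigma_j,b_j)$ with $j\neq i$ take no active part, so I would first push them out of the working region using the non-trivial analogue of Lemma \ref{lem6-3} recorded in the remark after it, and restore them at the very end. It then suffices to establish the single-letter transfer
\[
1(e,e)+h_i(\sigma_i,\sigma_{i_1}^{\epsilon_1}b_i')\sim 1(\sigma_i,\sigma_{i_1}^{\epsilon_1})+h_i(\sigma_i,b_i'),
\]
where $b_i'=\prod_{k\geq 2}\sigma_{i_k}^{\epsilon_k}$, and to iterate it.

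The transfer splits into two steps. First I would perform a cocore transfer: since $h_i(\sigma_i,\cdot)$ carries the cocore loop $\sigma_i$, applying the 1-handle move analogous to (\ref{eq7}) that is used in the proof of Lemma \ref{lem6-4} feeds this $\sigma_i$ into the trivial handle, deforming $1(e,e)$ into $1(\sigma_i,e)$ while leaving $h_i$ untouched; the receiving handle still has core loop $e$, which is unaffected by transport across edges (Lemma \ref{lem6-2}), so it may be brought to any chosen spot on $h_i$. Second I would use this $1(\sigma_i,e)$ to absorb a letter: because $|i-i_1|>1$ and $b_i$ commutes with $\sigma_i$, the label-$i_1$ edge of the core-loop chart meets the label-$i$ cocore chart of $h_i$ only in an ordinary crossing, so I would place $1(\sigma_i,e)$ at the crossing nearest the base point and make a bridge over it in the sense of Lemma \ref{lem6-5}. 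This reroutes the label-$i_1$ edge over the handle, turning $1(\sigma_i,e)$ into $1(\sigma_i,\sigma_{i_1}^{\epsilon_1})$ and simultaneously deleting that letter from the core loop of $h_i$, which becomes $h_i(\sigma_i,b_i')$.

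Iterating the transfer over the $r$ letters, taken in order starting from the base point, consumes the $r$ copies of $1(e,e)$ and empties the core loop of $h_i$, producing $\sum_k 1(\sigma_i,\sigma_{i_k}^{\epsilon_k})+h_i(\sigma_i,e)$; reinstating the handles $h_j$ with $j\neq i$ then gives the asserted equivalence.

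The hard part will be the chart-level bookkeeping in the bridge step: one must verify that making the bridge transfers exactly the single letter $\sigma_{i_1}^{\epsilon_1}$ with the correct orientation (as fixed in the definition of $1(\sigma_i,\sigma_j^{\epsilon})$), that the label-$i$ cocore loops on both $h_i$ and the receiving handle survive intact, and that rerouting the edge introduces no white vertices or label changes. This is exactly where the hypotheses $|i-i_k|>1$ and the commutativity of $b_i$ with $\sigma_i$ are essential, since they force every crossing met during the moves to be of CII type and let the letters of $b_i$ be separated and slid past one another and past the cocore without interaction.
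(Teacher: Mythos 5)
Your reduction fails at its foundation: the assertion that the handles $h_j(\sigma_j,b_j)$ with $j\neq i$ ``take no active part'' is exactly backwards, and the single-letter transfer you reduce to is false once those handles are pushed out of the working region. All the moves you invoke, if performed in a region $R$ containing only the trivial handle and $h_i$, induce a self-diffeomorphism of $R$ and change the monodromy only by an automorphism of $\pi_1(R)$ and conjugation in $B_N$; hence the conjugacy class of the subgroup of $B_N$ that is the image of $\pi_1(R)$ under the monodromy is an invariant. For your claimed equivalence $1(e,e)+h_i(\sigma_i,\sigma_{i_1}^{\epsilon_1}b_i')\sim 1(\sigma_i,\sigma_{i_1}^{\epsilon_1})+h_i(\sigma_i,b_i')$ this subgroup is $\langle \sigma_i,\,\sigma_{i_1}^{\epsilon_1}b_i'\rangle$ on the left and $\langle \sigma_i,\,\sigma_{i_1},\,b_i'\rangle$ on the right; taking $N=5$, $i=1$, $b_i=\sigma_3\sigma_4$, these are $\langle\sigma_1,\sigma_3\sigma_4\rangle$ and $\langle\sigma_1,\sigma_3,\sigma_4\rangle$, whose images in the symmetric group $S_5$ have orders $6$ and $12$ respectively, so the subgroups are not conjugate and the two configurations are not equivalent. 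The mechanism you propose also does not do what you claim: making a bridge as in Lemma \ref{lem6-5} merely relocates the crossing onto the bridge handle, while the label-$i_1$ loop still encircles the tube of $h_i$ (it only detours over the bridge), so the letter is not deleted from the core loop braid of $h_i$. Indeed, to place an isolated letter $\sigma_{i_1}^{\pm 1}$ on a handle that is to remain trivial, an end of that handle must cross a label-$i_1$ chart edge a net odd number of times; away from the meridian loops of $h_i$ the only such edges lie on $h_{i_1}$, the handle you froze, and crossing a meridian of $h_i$ net once forces the end through the tube of $h_i$, i.e.\ a handle slide that destroys triviality.

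This is precisely why the paper's proof runs through the other handles. It deforms the right-hand side to the left-hand side: sliding $h_i(\sigma_i,e)$ along each $1(\sigma_i,\sigma_{i_k}^{\epsilon_k})$ as in Lemma \ref{lem4-5} transfers the letter onto $h_i$ but leaves behind $1(e,\sigma_{i_k}^{\epsilon_k})$, not $1(e,e)$; the leftover label-$i_k$ loop is then eliminated by moving an end of that trivial handle across the label-$i_k$ chart loop along the core loop of $h_{i_k}(\sigma_{i_k},b_{i_k})$, as in Lemma \ref{lem4-6}. Your preliminary step $1(e,e)\sim 1(\sigma_i,e)$ via the analogue of (\ref{eq7}) is fine, but the letter-absorbing step has no valid replacement inside your restricted region; any correct argument must, as in the paper, let the handles $h_{i_k}$ with $i_k\neq i$ participate.
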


\begin{proof}
We deform the right form to the left form. Let $\prod_{k=1}^m \sigma_{i_k}^{\epsilon_k}$ be the presentation of $b_i$. 
 By applying a CI-M2-move to $H=h_i(\sigma_i, e)$ and the last 1-handle $H'=1(\sigma_i, \sigma_{i_m}^{\epsilon_m})$ in $\sum_{k=1}^m 1(\sigma_i, \sigma_{i_k}^{\epsilon_k})$, and sliding $H$ along $H'$ as in Lemma \ref{lem4-5}, and by CI-M2-moves as in Figure \ref{fig17}, $H+H'$ is deformed to $h_i(\sigma_i, \sigma_{i_m}^{\epsilon_m})+1(e, \sigma_{i_m}^{\epsilon_m})$. 
Repeat this process to $H$ and 1-handles $1(\sigma_i, \sigma_{i_{m-k+1}}^{\epsilon_{m-k+1}})$ in $\sum_{k=1}^m 1(\sigma_i, \sigma_{i_k}^{\epsilon_k})$ for $k=1,2,\ldots, m$. Then, 
$\sum_{k=1}^m 1(\sigma_i, \sigma_{i_k}^{\epsilon_k})+h_i(\sigma_i, e)$ 
is deformed to 
\[
\sum_{k=1}^m 1(e, \sigma_{i_k}^{\epsilon_k})+h_i(\sigma_i, b_i). 
\]
Then, by moving an end of each $1(e, \sigma_{i_k}^{\epsilon_k})$ across the chart loop along the core loop of $h_{i_k}(\sigma_{i_k}, b_{i_k})$ in $\sum_{j\neq i}^{N-1}h_j(\sigma_j, b_j)$ as in Lemma \ref{lem4-6}, we can eliminate the loop on each $1(e, \sigma_{i_k}^{\epsilon_k})$, and the required result follows. 
\end{proof}
 
\begin{proof}[Proof of Theorem \ref{thm6}]
We prove the second relation. 
By Lemmas \ref{lem6-2}, \ref{lem6-3} and \ref{lem6-4}, by an addition of 1-handles $\sum_{j=1}^{N-1} 1(\sigma_j, e)$ and a 1-handle $H=1(e,e)$ to a 2-disk $B_0$, we can move $H$ to any 2-disk $B$ in $F$ such that $H$ attached to $B$ has the presentation $1(\sigma_i, e)$ for any $i \in \{1,\ldots,N-1\}$. 
For this deformation, we say we {\it send $1(\sigma_i, e)$ to $B$}.  
 
First we add to $(F, \Gamma)$ 1-handles with chart loops attached to $B_0$ as follows:
\begin{equation*}
(F, \Gamma)+\sum 1(e,e)+\sum_{j=1}^{N-1} 1(\sigma_j, e), 
\end{equation*} 
for a large number of 1-handles $1(e,e)$. We denote the result of 1-handle additions by $S$. 

We eliminate white vertices in $\Gamma$ as follows. 
Send a 1-handle $H=1(\sigma_i,e)$ to near a non-middle edge with the label $i$ of a white vertex. Make a bridge to have the white vertex on $H$. Then slide one end of $H$ along the diagonal edges. If the end comes to a non-middle edge of another white vertex, then make a bridge and have the vertex on $H$ again. If the end comes to a middle edge of another white vertex or a crossing, then send another 1-handle to near another non-middle edge of the white vertex or an edge of the crossing, and make a bridge to let $H$ pass. Repeat this process, until the end of $H$ comes back near the other end. Then, on $H$, there are only white vertices as vertices. Apply a CI-M3-move (see Figure \ref{fig5}) to adjacent white vertices. If the move cannot be applied, then add another 1-handle to reverse the orientations of the edges around one of the white vertices so that we can apply the move; thus we can eliminate white vertices on 1-handles, and the resulting 1-handles are in the form $h(\sigma_i, e)$ for a 1-handle $h$ attached to $F$. Repeat this process, until we eliminate all the white vertices; thus, $S$ is deformed to 

\begin{equation}\label{eq6-6}
(F, \Gamma')+\sum_\mu H_\mu +\sum 1(e,e)+\sum_{j=1}^{N-1} 1(\sigma_j, e), 
\end{equation} 
where $\Gamma'$ is a chart which has no white vertices, and 
$H_\mu=h_\mu(\sigma_i, e)$ for a 1-handle $h_\mu$ attached to a 2-disk $D_\mu$, and $i \in \{1,\ldots, N-1\}$, and the other 1-handles are attached to $B_0$. 

Next we eliminate all chart loops on $F+\sum_\mu h_\mu$. 
Since we have $\sum_{j=1}^{N-1} 1(\sigma_j, e)$ attached to $B_0$,  apply a CI-M2-move to a chart loop $E$ nearest $\sum_{j=1}^{N-1} 1(\sigma_j, e)$ and $H=1(\sigma_i,e)$ in $\sum_{j=1}^{N-1} 1(\sigma_j, e)$, where $i$ is the label of $E$. Then, by sliding one end of $H$ along $E$ as in the proof of Theorem \ref{thm5}, and by CI-M2-moves as in Figure \ref{fig17}, the union of $E$ and $H$ is deformed to a 1-handle with presentation $h(\sigma_i, b)$, where $h$ is a 1-handle attached to $B_0$, and $b$ is a braid which commutes with $\sigma_i$, representing the crossings on $E$.  Thus we eliminate $E$. Repeat this process to every chart loop in $\Gamma'$ and $H_\mu$, until $S$ is deformed to 
    
\[
(F, \emptyset)+\sum_\mu H'_\mu+\sum1(e,e)+\sum_{j=1}^{N-1}h_j(\sigma_j, b_j), \]
where $H'_\mu=h_\mu(e,e)$ for a 1-handle $h_\mu$ attached to a 2-disk $D_\mu$, and $h_1, \ldots,h_{N-1}$ are 1-handles attached to $B_0$, $b_j$ is a braid which commutes with $\sigma_j$ ($j=1,\ldots,N-1$), and the other trivial 1-handles are attached to $B_0$. 

Now, ignoring charts, we have deformed $F+\sum_\mu 1$ to the form $F+\sum_\mu h_\mu$. Hence, $(F+\sum_\mu h_\mu, \emptyset)$ can be deformed to $(F+\sum_\mu 1, \emptyset)$. 
Thus, by deforming $F+\sum_\mu h_\mu$ by a reverse deformation to recover the original trivial 1-handles, we have 
\begin{equation}\label{eq6-1}
(F, \emptyset)+\sum1(e,e)+\sum_{j=1}^{N-1}h_j(\sigma_j, b_j).  
\end{equation}

Apply the deformation as in Lemma \ref{lem6-7} to $\sum1(e,e)+\sum_{j=1}^{N-1}h_j(\sigma_j, b_j)$ $N-1$ times, and we have the required result. 
\end{proof}

\begin{definition}\label{def6-7}
For a chart $\Gamma$ of degree $N$, we say that a crossing consisting of diagonal edges with the label $i$ and $j$ ($i<j,\ j-i>1$) is of {\it type} $(i,j)$, and a crossing of type $(i,j)$ has the {\it sign} $+1$ (respectively, $-1$) if the normal of the edge with the label $i$ is coherent (respectively, incoherent) with the orientation of the edge with the label $j$. The {\it algebraic sum of the number of crossings in $\Gamma$ of type $(i,j)$}, denoted by $c_{\mathrm{alg},i,j}(\Gamma)$, is the sum of the signs of crossings of type $(i,j)$ in $\Gamma$, and we define $c_{\mathrm{alg}}(\Gamma)$ by $c_{\mathrm{alg}}(\Gamma)=\sum_{1\leq i<j\leq N-1}|c_{\mathrm{alg},i,j}(\Gamma)|$. 
\end{definition}

\begin{proof}[Proof of Theorem \ref{thm7}]
We show the second relation. 
By Theorem \ref{thm6}, it suffices to show that $1(\sigma_i, \sigma_j) + 1(\sigma_i, \sigma_j^{-1})$ ($|i-j|>1$) is equivalent to trivial 1-handles with chart loops without crossings. We denote by $H_1$ and $H_2$ the first and the second 1-handles, respectively. By sliding $H_1$ on $H_2$ as in Lemma \ref{lem4-5}, $H_1 + H_2$ is deformed to $1(\sigma_i, \sigma_j\sigma_j^{-1}) +1(e,\sigma_j^{-1})$: $H_1$ has two crossings and $H_2$ has a loop without crossings. Since $\sigma_j^{-1}\sigma_j=e$ in $B_N$, $H_1$ is equivalent to $1(\sigma_i, e)$, but we will show this by using C-moves. Since the crossings in $H_1$ have opposite signs, by a CI-M2-move, we have a loop bounding a disk with the label $j$ on $H_1$. By a CI-R2-move (see Figure \ref{fig5}), we can eliminate the crossings on the loop, and by a CI-M1-move, we can eliminate the loop itself, and the resulting 1-handle is $h(\sigma_i, e)$ (see also Figure \ref{fig7}). Thus $H_1+H_2$ is equivalent to $1(\sigma_i,e)+1(e, \sigma_j^{-1})$, trivial 1-handles with chart loops without crossings, and the result follows. 
\end{proof}

\begin{proof}[Proof of Proposition \ref{prop1}]
The inequality $u_w(F, \Gamma) \leq u(F, \Gamma)$ is obvious. By the proof of Theorem \ref{thm7}, the other inequality $u(F, \Gamma) \leq u_w(F, \Gamma)+c_{\mathrm{alg}}(\Gamma)$ holds true. 
\end{proof}

A chart edge is called a {\it free edge} if it is connected with two black vertices at its end points. An addition of 1-handles with chart loops is similar to an addition of free edges (see \cite{Kamada99}, see also \cite[Chapter 31]{Kamada02}).
 
\begin{proof}[Proof of Proposition \ref{prop2}]
We show that the inequality $u_w(F, \Gamma) \leq w(\Gamma)+2c(\Gamma)+N-1$ follows from the proof of Theorem \ref{thm6}. For each crossing, add two 1-handles  $H$ and $H'$ to make double bridges such that $H$ is attached to $F$ and $H'$ is attached to $H$, so that we have the crossing on $H'$ and the edges which formed the crossing were separated on $F$ and $H$ as simple edges without crossings (see Figure \ref{fig18-2}). Then, when we move ends of 1-handles to gather white vertices, we can move them along edges without crossings. Thus, we use $2c(\Gamma)$ 1-handles. From now on, we fix these 1-handles in these forms. 

Add a set of 1-handles $\sum_{j=1}^{N-1}1(\sigma_j,e)$ and $w(\Gamma)$ 1-handles in the form $1(e,e)$, attached to a 2-disk. 
Then, for each white vertex, send a 1-handle and make a bridge. Since $\Gamma$ contained no black vertices, now, for every white vertex $W$, there is an embedded circle containing $W$, which consists of non-middle edges connecting white vertices. 
Let $E$ be such an embedded circle. Since $E$ consists of non-middle edges, slide an end of one of the added 1-handles along $E$ to gather all white vertices on $E$ on the 1-handle. Note that since the diagonal edges forming $E$ are labeled by odd integers and even integers in turn, the number of the white vertices is even. Repeat this process to every such an embedded circle, so that we have $m$ 1-handles with gathered white vertices, and $w(\Gamma)-m$ 1-handles with chart loops without crossings, and $\sum_{j=1}^{N-1}1(\sigma_j,e)$. 
We need at most $w(\Gamma)/2$ 1-handles to change the orientations of the edges around white vertices to remove them by CI-M3 moves. These 1-handles can be obtained by recycling the other $w(\Gamma)-m$ 1-handles with chart loops without crossings, by using $\sum_{j=1}^{N-1}1(\sigma_j,e)$. Note that on each 1-handle gathering the white vertices, there are at least two white vertices, hence $m \leq w(\Gamma)/2$; this implies that $w(\Gamma)/2\leq w(\Gamma)-m$, and we see that we have enough 1-handles. Thus we remove all the white vertices. Then use $\sum_{j=1}^{N-1}1(\sigma_j,e)$ to eliminate the chart loops and have the form (\ref{eq6-1}) in the proof of Theorem \ref{thm6}. Thus, in total we use at most $2c(\Gamma)+w(\Gamma)+N-1$ 1-handles to make $\Gamma$ an empty chart. Now we have  $2c(\Gamma)+w(\Gamma)$ 1-handles in the form $1(e,e)$. Since we need $c(\Gamma)$ such 1-handles to apply deformations as in Lemma \ref{lem6-7} to obtain the required form, we see that we added enough 1-handles and $u_w(F, \Gamma) \leq w(\Gamma)+2c(\Gamma)+N-1$. 
\end{proof}
\begin{figure}[ht]
\centering
\includegraphics*[height=5cm]{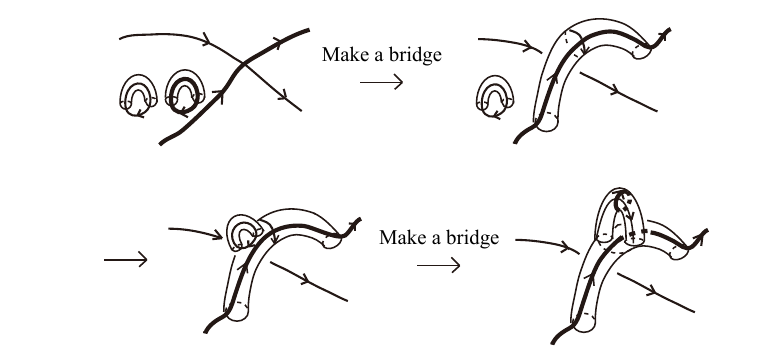}
\caption{Making double bridges over a crossing. For simplicity, we omit the labels of the chart edges.}
\label{fig18-2}
\end{figure}
 
\section{Unbraiding 2-dimensional braids with branch points}\label{sec7}

We consider $(F, \Gamma)$ for any surface-knot $F$ and any chart $\Gamma$. 

\begin{remark}\label{rem6-8}
Let $(F, \Gamma)$ be a 2-dimensional braid for any surface-knot $F$ and any chart $\Gamma$. 
Then Theorems \ref{thm6} and \ref{thm7} hold true when we change the resulting $(F, \emptyset)$ to $(F, \Gamma_0)$, where $\Gamma_0$ is an unknotted chart, and Propositions \ref{prop1} and \ref{prop2} hold true with unchanged. 
\end{remark}

\begin{proof}
It suffices to show that we can discuss the same argument as in the proof of Theorem \ref{thm6}. In order to show this, it suffices to see the step when we move an end of a 1-handle $H$ to gather white vertices on $H$, in particular when the diagonal edges form an arc whose endpoints are black vertices, along which we move an end of $H$.

In this case, move the both ends of $H$ along diagonal edges of white vertices, by making bridges to avoid passing crossings and middle-edges, until we gather white vertices and two black vertices on $H$. 
Since the diagonal edges connected with the black vertices are all non-middle edges, by CIII-moves (see Figure \ref{fig5}), we can eliminate all the white vertices on $H$, and $H$ has a free edge and chart loops along the cocore. 
Thus each $H_\mu$ in (\ref{eq6-6}) in the proof of Theorem \ref{thm6} becomes a 1-handle with a free edge and chart loops along the cocore. 

Hence, we can discuss the same argument as in the proof of Theorem \ref{thm6}, and the resulting chart $\Gamma_0$ on $F$ does not contain white vertices, crossings or chart loops; which implies that $\Gamma_0$ is a chart consisting of free edges, an unknotted chart. 
Hence the similar results as in Theorems \ref{thm6} and \ref{thm7} hold true when we change the resulting $(F, \emptyset)$ to $(F, \Gamma_0)$, where $\Gamma_0$ is an unknotted chart. 
By the same argument as in the proofs of Propositions \ref{prop1} and \ref{prop2}, the same inequalities in the propositions hold true. 
\end{proof}

Before the proof of Theorem \ref{thm12}, we prepare the following lemma. 

\begin{lemma}\label{lem7-2}
We denote by $f_i$ a free edge with the label $i$. 
For 1-handles $h_1, \ldots, h_{N-1}$, we have  

\[
(F, f_i)+\sum_{k=1}^{N-1}h_k(\sigma_k, e)\sim (F, f_j)+\sum_{k=1}^{N-1}h_k(\sigma_k, e), 
\]
for any $i,j \in \{1,\ldots, N-1\}$. 

\end{lemma}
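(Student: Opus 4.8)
The plan is to prove $(F, f_i)+\sum_{k=1}^{N-1}h_k(\sigma_k, e)\sim (F, f_j)+\sum_{k=1}^{N-1}h_k(\sigma_k, e)$ by showing that a free edge with label $i$ can be exchanged for one with label $j$ whenever the adjacent labels differ by exactly $1$, and then inducting to connect any two labels $i,j \in \{1,\ldots,N-1\}$. Since the values $\{1,\ldots,N-1\}$ form a path graph under the relation $|i-j|=1$, it suffices to establish the single-step relation
\begin{equation*}
(F, f_i)+\sum_{k=1}^{N-1}h_k(\sigma_k, e)\sim (F, f_{i+1})+\sum_{k=1}^{N-1}h_k(\sigma_k, e)
\end{equation*}
for each $i$, after which the general statement follows by chaining these steps along the path from $i$ to $j$.

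To establish the single-step relation, I would exploit the 1-handle $h_{i}(\sigma_i, e)$ (or $h_{i+1}(\sigma_{i+1}, e)$), whose chart loop carries the label $i$ (respectively $i+1$), together with a CIII-move, since $|i-(i+1)|=1$ is exactly the regime where CIII-moves apply. First I would bring the free edge $f_i$ near the chart loop of an appropriate 1-handle by moving the set $\sum_{k=1}^{N-1}h_k(\sigma_k, e)$ (which, by the remark following Lemma~\ref{lem6-3}, can be moved anywhere and whose components $h_k(\sigma_k,e)$ may be replaced by $h_k(\sigma_k,b)$ for commuting $b$). Then I would create a white vertex by applying a CI-M2-move between $f_i$ and a chart loop with label $i+1$, producing a configuration where a black vertex of $f_i$ is connected to a non-middle edge of a white vertex involving labels $i$ and $i+1$. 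Applying a CIII-move then slides the black vertex across the white vertex, converting the free edge's label from $i$ to $i+1$; a further CI-M2-move cleans up the auxiliary chart loop, leaving $f_{i+1}$ together with the unchanged set of 1-handles.

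The main obstacle I anticipate is bookkeeping the orientations and the precise middle/non-middle status of the edges so that the CIII-move is genuinely applicable: a CIII-move requires the black vertex to be attached to a \emph{non-middle} edge of the white vertex, and the orientations around the white vertex must match Figure~\ref{fig2}. If the orientations are wrong after the CI-M2-move, I would invoke the orientation-reversal technique of Lemma~\ref{lem6-6}, adding an auxiliary 1-handle to flip the orientations of the edges around the white vertex so that the CIII-move can proceed; the extra 1-handle can then be recycled or absorbed back into the set $\sum_{k=1}^{N-1}h_k(\sigma_k, e)$ using Lemma~\ref{lem6-4}. A secondary point requiring care is ensuring the set $\sum_{k=1}^{N-1}h_k(\sigma_k, e)$ is truly returned to its original form at the end of each step, so that the chaining argument composes cleanly; this follows because every auxiliary deformation (moving the set, making bridges, reversing orientations) is reversible and preserves the set's presentation up to the moves already established in Lemmas~\ref{lem6-3} and~\ref{lem6-4}.
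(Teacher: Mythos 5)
Your overall skeleton matches the paper's: reduce to the case $|i-j|=1$ and use the chart loops of the 1-handles together with CIII-moves to change the label of the free edge. But the step on which you build everything is not a legal move. A CI-M2-move can only be applied to edges carrying the \emph{same} label (it is the creation/annihilation of a pair of parallel same-labeled arcs; see Figure \ref{fig5}); there is no C-move that lets an edge labeled $i$ interact directly with an edge labeled $i+1$, and no CI-M2-move ever produces a white vertex. White vertices appear either in pairs via CI-M3-moves, or via a CIII-move pushing a black vertex through the appropriate local configuration. The paper instead proceeds by conjugation: it moves $f_i$ bodily across the chart loop labeled $j$ along the core loop of $h_j(\sigma_j,e)$ --- which amounts to a CI-M2-move of that loop with itself, exactly as in Lemma \ref{lem6-2} --- so that $f_i$ becomes enclosed by a loop labeled $j$ (Figure \ref{fig19}); only in that enclosed configuration do CIII-moves apply, converting ``$f_i$ surrounded by a $j$-loop'' into ``$f_j$ surrounded by an $i$-loop'' (Figure \ref{fig20}).

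The second gap is your cleanup. After the label conversion there remains a loop of the adjacent label encircling the new free edge, and this loop cannot be removed by ``a further CI-M2-move'': a CI-M1-move requires the loop to bound a disk free of other chart edges (it contains $f_j$), and a CI-M2-move requires another edge of the same label nearby, which does not exist without invoking a handle. The paper removes this residual loop by moving the whole configuration across the chart loop of the \emph{other} handle $h_i(\sigma_i,e)$ --- a second conjugation. This is precisely why the lemma needs the handles $h_k(\sigma_k,e)$ for both labels involved, whereas your single-step argument never uses $h_i$ at all when passing from $f_i$ to $f_{i+1}$. Your auxiliary devices (Lemma \ref{lem6-6} for orientation reversal, Lemma \ref{lem6-4} for recycling handles) are reasonable precautions but do not repair these two points.
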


\begin{proof}
It suffices to show for the case when $|i-j|=1$. 
When we have a free edge $f_i$, move it across the chart loop with the label $j$ along the core loop of $h_j(\sigma_j, e)$ to add a loop with the label $j$ surrounding $f_i$ (see Figure \ref{fig19}). Then, by CIII-moves, $f_i$ surrounded by the loop is deformed to a free edge $f_j$  surrounded by a loop with the label $i$ (see Figure \ref{fig20}). Then, move the resulting chart across the chart loop with the label $i$ along the core loop of $h_i(\sigma_i, e)$ to remove the loop. Thus $f_i$ is deformed to $f_j$. 
\end{proof}

\begin{figure}[ht]
\centering
\includegraphics*[width=13cm]{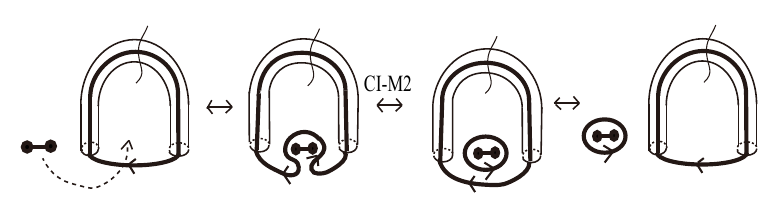}
\caption{Moving a free edge across a chart loop along the core loop of a 1-handle. For simplicity, we omit the labels of chart edges and the orientation of the free edge.}
\label{fig19}
 \end{figure}

\begin{figure}[ht]
\centering
\includegraphics*[width=13cm]{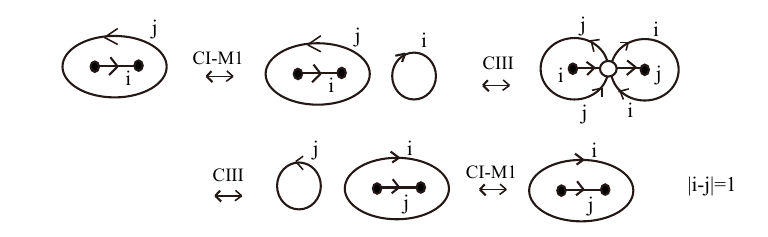}
\caption{Changing the labels of free edges surrounded by a loop.}
\label{fig20}
 \end{figure}

\begin{proof}[Proof of Theorem \ref{thm12}]
By the result similar to Theorem \ref{thm6}, by an addition of 1-handles $\sum_{j=1}^{N-1}1(\sigma_j, e)$ and finitely many $1(e,e)$, to a fixed 2-disk in $F$, $(F, \Gamma)$ is deformed to 
\begin{equation*}
(F, \Gamma_0) +\sum_{j=1}^{N-1}h_j(\sigma_j, e)+\sum_\lambda 1_\lambda,
\end{equation*}
where $\Gamma_0$ is an unknotted chart, $h_1, \ldots, h_{N-1}$ are 1-handles attached to $F$, and $1_\lambda=1(\sigma_i, \sigma_j^{\epsilon})$ or $1(e,e)$ ($i,j \in \{1.\ldots,N-1\}, |i-j|>1$ and $\epsilon \in \{+1, -1\}$). 
The unknotted chart $\Gamma_0$ consists of $b(\Gamma)/2$ free edges. Since $b(\Gamma) \geq 2(N-1)$, $\Gamma_0$ consists of at least $N-1$ free edges. 
By Lemma \ref{lem7-2}, we can deform $\Gamma_0$ so that $\Gamma_0$ contain free edges of all labels in $\{1, \ldots, N-1\}$. 

Then, by applying a CI-M2-move to a chart loop $E$ and a free edge $f$ of the same label and applying a CII-move if necessary, let us deform the union of $E$ and $f$ to $f$; thus we  eliminate $E$. Apply this deformation to all the chart loops on 1-handles, and we have 
\begin{equation*}
(F, \Gamma_0) +\sum_{j=1}^{N-1}h_j(e, e)+\sum_\lambda 1(e,e). 
\end{equation*}
Since we first attached trivial 1-handles, by a deformation which recovers the original 1-handles, we can deform $h_1, \ldots, h_{N-1}$ to  trivial 1-handles; hence we have the required result. 
\end{proof}

\section{Example}\label{sec8}

We show an example. 

\begin{figure}[ht]
\centering
\includegraphics*[height=5cm]{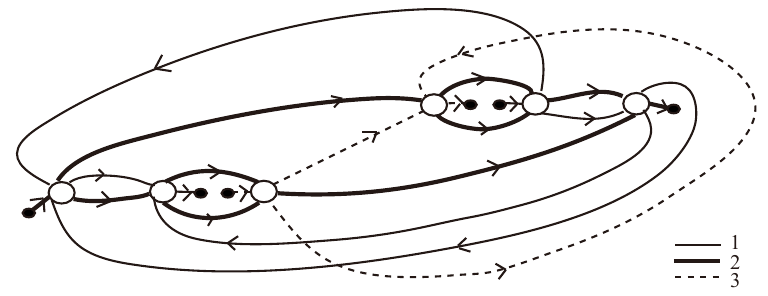}
\caption{The chart $\Gamma$, where we regard $\Gamma$ as drawn on $S^2$.}
\label{fig21}
\end{figure}

\begin{proposition}
Let $(S^2, \Gamma)$ be a 2-dimensional braid where $S^2$ is the 2-sphere standardly embedded in $\mathbb{R}^4$ and $\Gamma$ is the chart illustrated in Figure \ref{fig21}. As a surface-knot, $(S^2, \Gamma)$ presents a 2-twist-spun trefoil \cite[Section 21.4]{Kamada02}. 
Then, the unbraiding number and the weak unbraiding number of $(S^2, \Gamma)$ is one:

\[
u(S^2, \Gamma)=u_w(S^2, \Gamma)=1.
\]

\end{proposition}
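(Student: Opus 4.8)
The plan is to reduce both invariants to the weak unbraiding number and then to trap $u_w(S^2,\Gamma)$ between $1$ and $1$.

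From Figure \ref{fig21} one reads off that $\Gamma$ has degree $3$: it cannot have degree $2$, since a degree-$2$ chart admits neither white vertices nor crossings and so would present a trivial surface-knot, contradicting the nontriviality of the $2$-twist-spun trefoil. In degree $3$ the only labels are $1$ and $2$, so $\Gamma$ contains no crossing of type $(i,j)$ with $j-i>1$, whence $c_{\mathrm{alg}}(\Gamma)=0$. Proposition \ref{prop1} then reads $u_w(S^2,\Gamma)\le u(S^2,\Gamma)\le u_w(S^2,\Gamma)$, so $u(S^2,\Gamma)=u_w(S^2,\Gamma)$ and it remains only to show $u_w(S^2,\Gamma)=1$.

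For the lower bound I would argue $u_w(S^2,\Gamma)\ge 1$ from nontriviality. If $u_w(S^2,\Gamma)=0$, then $(S^2,\Gamma)$ is already of the form (\ref{eq:6-2}) with empty sum, hence carried by $1$-handle moves (which are equivalences of $2$-dimensional braids) to the standard $2$-dimensional braid $(S^2,\emptyset)$, which presents a trivial surface-knot; since $(S^2,\Gamma)$ presents the nontrivial $2$-twist-spun trefoil, this is impossible. (Alternatively, one may check that the unknotting number of the chart satisfies $u(\Gamma)=1$ and invoke the inequality $u(\Gamma)\le u_w(S^2,\Gamma)$ of Proposition \ref{prop2}.)

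For the upper bound I would exhibit an explicit one-handle deformation. Adding a single $1$-handle $H=1(\sigma_i,e)$ near a non-middle edge of a white vertex, I would use Lemma \ref{lem6-5} to bridge that vertex onto $H$, then slide the free end of $H$ along diagonal edges, making further bridges, so as to gather all white vertices of $\Gamma$ onto $H$; I then cancel them in pairs by CI-M3-moves and finally eliminate the remaining chart loops along $H$ exactly as in the proof of Theorem \ref{thm5}, arriving at $(S^2,\emptyset)+h(\sigma_i,e)$, which is the form (\ref{eq:6-2}) with one summand. This gives $u_w(S^2,\Gamma)\le 1$, and with the previous paragraphs $u(S^2,\Gamma)=u_w(S^2,\Gamma)=1$. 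The hard part is exactly this step: the general procedure of Theorem \ref{thm6} already spends $N-1=2$ handles on the reference set $\sum_{j}1(\sigma_j,e)$, so showing that one handle suffices is special to this chart and must be checked directly from Figure \ref{fig21}, verifying that all white vertices can be gathered onto a single handle and cancelled \emph{without} ever needing a second handle either to change a label (as in Lemma \ref{lem7-2}) or to reverse an orientation (as in Lemma \ref{lem6-6}); I expect this to rest on the small, symmetric configuration of white vertices of the $2$-twist-spun trefoil and to be presented as an explicit sequence of C-moves and $1$-handle moves.
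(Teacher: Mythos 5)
Your skeleton (explicit one-handle deformation for the upper bound, non-triviality for the lower bound, and $u=u_w$ via the crossing term) resembles the paper's, but two of your factual claims about $\Gamma$ are wrong and they break the argument. First, $\Gamma$ has degree $4$, not $3$: edges with labels $1,2,3$ occur in Figure \ref{fig21}, and indeed the paper's deformation terminates in a handle $1(\sigma_3,e)$; independently, by \cite{Kamada92} a surface-knot of braid index at most $3$ is ribbon, and the $2$-twist-spun trefoil is not, so no degree-$3$ chart could present it. Consequently your derivation of $c_{\mathrm{alg}}(\Gamma)=0$ (``only labels $1$ and $2$ occur'') collapses; in degree $4$, crossings of type $(1,3)$ are a priori possible and vanishing of $c_{\mathrm{alg}}$ would have to be read off from the figure. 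Second, $\Gamma$ has six black vertices, so you are outside the hypotheses of Theorem \ref{thm6} and Proposition \ref{prop1} (both assume no black vertices); you must use the modified statements of Section \ref{sec7} (Remark \ref{rem6-8}), in which the target of the deformation is $(F,\Gamma_0)$ for an unknotted chart $\Gamma_0$, not $(F,\emptyset)$. This is not cosmetic: black vertices (branch points) are preserved by C-moves, Roseman moves and $1$-handle additions, so your proposed terminal form $(S^2,\emptyset)+h(\sigma_i,e)$ is unattainable. The form the paper reaches is $(S^2,\Gamma_0)+1(\sigma_3,e)$ with $\Gamma_0$ a union of three free edges. For the same reason, your plan to cancel white vertices ``in pairs by CI-M3-moves'' is the strategy for branch-point-free charts; here each white vertex is instead killed against a black vertex by a CIII-move, which is exactly how the paper proceeds (and why no orientation-reversing auxiliary handle as in Lemma \ref{lem6-6} is ever needed).

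Beyond these, the step you yourself flag as ``the hard part'' --- that a single handle suffices --- is precisely the content of the paper's proof, and you have not supplied it. The paper carries it out concretely: add $1(\sigma_1,e)$ near $W_2$ and bridge $W_2$ onto the handle; then alternately eliminate white vertices by CIII-moves against black vertices ($W_6$, $W_4$, $W_5$, $W_1$) and slide an end of the handle along diagonal edges to absorb $W_3$; finally kill $W_3$ and $W_2$ on the handle by CIII-moves, leaving three free edges and $H=1(e,\sigma_2^{-1}\sigma_1^{-1}\sigma_3\sigma_2)$, whose label-$1$ and label-$2$ loops are eliminated by the free edges, after which exchanging core loop and cocore (Lemma \ref{lem4-4}) yields $1(\sigma_3,e)$. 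Note also that this makes your appeal to Proposition \ref{prop1} unnecessary: since the terminal handle $1(\sigma_3,e)$ is of the type allowed in both (\ref{eq:6-2}) and (\ref{eq:7}), the single deformation bounds $u$ and $u_w$ by $1$ simultaneously. Your lower-bound argument (non-triviality of the $2$-twist-spun trefoil forces at least one handle) is sound once restated for the unknotted-chart target form, and agrees with the paper's.
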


\begin{figure}[ht]
\centering
\includegraphics*[width=13cm]{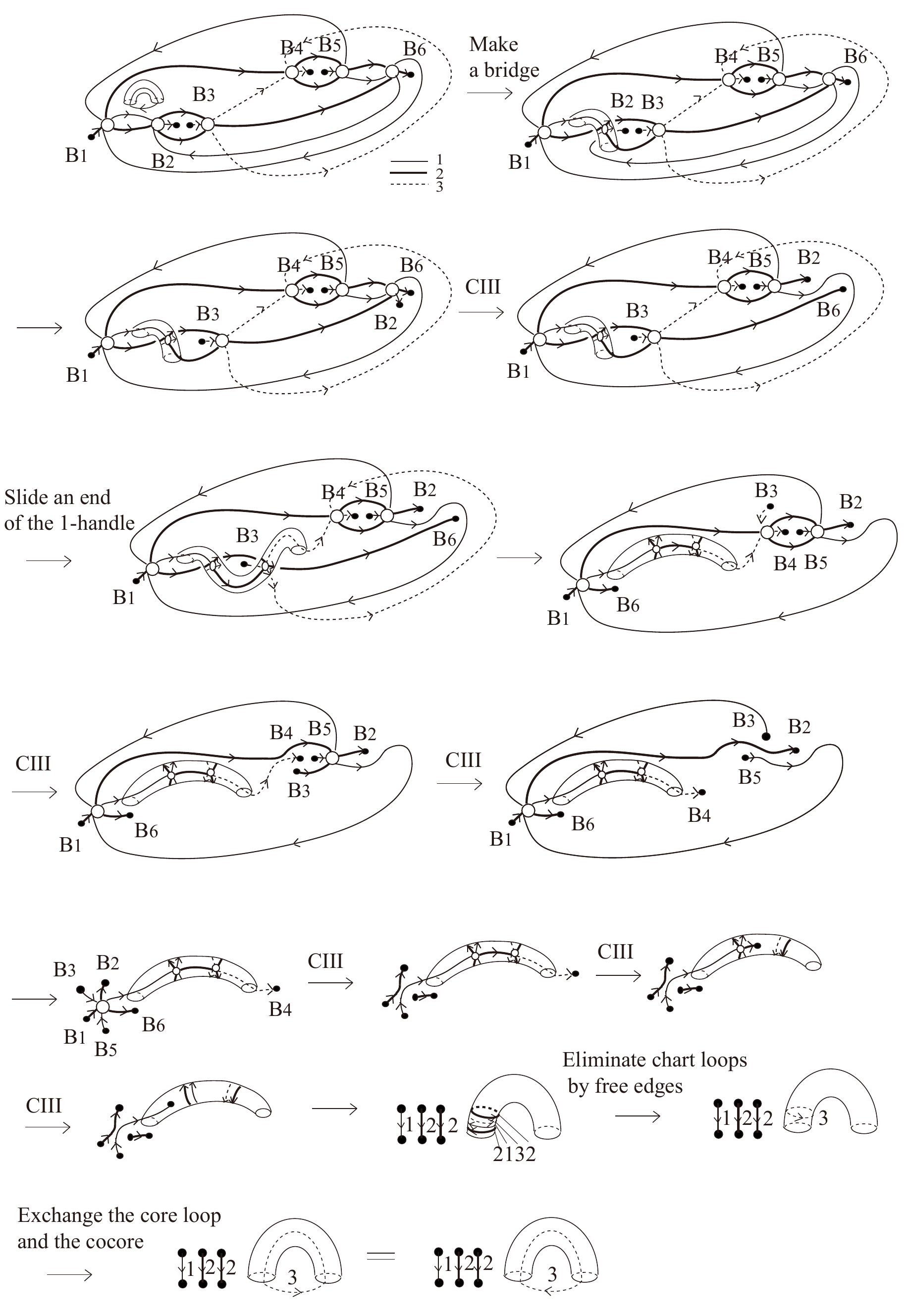}
\caption{Unbraiding $\Gamma$ by an addition of a 1-handle $1(\sigma_1, e)$.}
\label{fig22}
\end{figure}

We remark that it is known \cite[Section 31.3]{Kamada02} that $\Gamma$ is deformed to an unknotted chart by an addition of a free edge, thus $\Gamma$ has the unknotting number one: $u(\Gamma)=1$. 

\begin{proof}
We show that $(S^2, \Gamma)$ can be deformed to the form $(S^2, \Gamma_0)+1(\sigma_3, e)$ for an unknotted chart $\Gamma_0$, by an addition of a 1-handle with a chart loop $1(\sigma_1,e)$. 
Since $\Gamma$ is not equivalent to an unknotted chart, this implies that $u(S^2, \Gamma)=u_w(S^2, \Gamma)=1$. 

We denote by $W_j$ (respectively, $B_j$) the $j$th white vertex (respectively, black vertex) from the left in Figure \ref{fig21} $(j=1,\ldots, 6)$. 
First, add a 1-handle $H=1(\sigma_1, e)$ near $W_2$ as indicated in the first figure in Figure \ref{fig22}, and make a bridge to gather $W_2$ on $H$. Then, $B_2$ is connected with $W_6$. By an ambient isotopy, move $B_2$ near $W_6$. Since $B_2$ and $W_6$ are connected by a non-middle edge, apply a CIII-move to eliminate $W_6$. Then $B_2$ is connected with $W_5$ and $B_6$ is connected with $W_3$. Then, slide an end of $H$ along the diagonal edges of $W_3$ to gather $W_3$ on $H$. Then $B_3$ is connected with $W_4$ and $B_6$ is connected with $W_1$. Apply a CIII-move to eliminate $W_4$. Then $B_4$ is connected with $W_3$ and $B_3$ is connected with $W_5$. 
Apply a CIII-move to eliminate $W_5$. Then $B_2, B_3, B_5$ are connected with $W_1$. Apply a CIII-move to eliminate $W_1$. Then, we have two free edges with the label $2$, and $W_2$ and $W_3$ on $H$ connected with two black vertices. Since the diagonal edges of $W_2$ and $W_3$ connected with the black vertices are non-middle edges, apply CIII-moves twice to eliminate $W_3$ and then $W_2$. Then we have a free edge with the label $1$ and two free edges with the label $2$ and a 1-handle $H=1(e, \sigma_2^{-1}\sigma_1^{-1}\sigma_3\sigma_2)$, which is presented by loops with the labels $2,1,3,2$ along the cocore. By using the free edges, eliminate the loops with the labels $2$ and $1$. Then, $H$ is deformed to $1(e, \sigma_3)$. By exchange of the core loop and the cocore as in Lemma \ref{lem4-4}, $H$ is deformed to $1(\sigma_3^{-1}, e)$, which is equivalent to $1(\sigma_3, e)$. Thus, by an addition of $1(\sigma_1, e)$, $(S^2, \Gamma)$ is deformed to $(S^2, \Gamma_0)+1(\sigma_3, e)$ for an unknotted chart $\Gamma_0$.  
\end{proof}

\section*{Acknowledgements}
The author would like to thank the referee for his/her helpful comments. 
This work was supported by iBMath through the fund for Platform Project for Supporting in Drug Discovery and Life Science Research (Platform for Dynamic Approaches to Living System) from the Ministry of Education, Culture, Sports, Science and Technology, Japan (MEXT) and Japan Agency for Medical Research and Development (AMED), and JSPS KAKENHI Grant Number 15K17532.

\end{document}